\let\oldmarginpar\marginpar
\renewcommand\marginpar[1]{\oldmarginpar[\raggedleft\footnotesize #1]%
{\raggedright\footnotesize #1}}
\newtheorem{theorem}{Theorem}[section]
\newtheorem{lemma}[theorem]{Lemma}
\newtheorem{corollary}[theorem]{Corollary}
\newtheorem{proposition}[theorem]{Proposition}
\newtheorem{conjecture}[theorem]{Conjecture}
\newtheorem{define}[theorem]{Definition}
\theoremstyle{definition}
\newtheorem{remark}[theorem]{Remark}
\newtheorem{example}[theorem]{Example}
\newcommand{\ZZ}{{\mathbb{Z}}}
\newcommand{\QQ}{{\mathbb{Q}}}
\newcommand{\bdy}{{\partial}}
\newcommand{\M}{M_{K_{p,q}}}
\newcommand{\cut}{{\backslash \backslash}}
\newcommand{\C}{{C_{p, q}}}
\newcommand{\K}{{K_{p, q}}}
\newcommand{\abs}[1]{{\left\vert #1 \right\vert}}
\newcommand{\GA}{{\mathbb{G}_A}}
\newcommand{\GB}{{\mathbb{G}_B}}
\newcommand{\G}{{\mathbb{G}}}
\newcommand\no[1]{}
\newtheorem*{namedtheorem}{\theoremname}
\newcommand{\theoremname}{testing}
\def\BC{\mathbb C}
\def\BN{\mathbb N}
\def\BZ{\mathbb Z}
\def\BQ{\mathbb Q}
\def\CS{\mathcal S}
\def\la{\langle}
\def\ra{\rangle}
\def\ve{\varepsilon}
\def\be { \begin{equation} }
\def\ee { \end{equation} }
\begin{document}

\title[]{Knot Cabling and the Degree of the Colored Jones Polynomial}

\author[]{Efstratia Kalfagianni}
\address{Department of Mathematics, Michigan State University, East Lansing, MI, 48824}
\email{kalfagia@math.msu.edu}

\author[]{Anh T. Tran}
\address{Department of Mathematical Sciences, University of Texas at Dallas,  Richardson, TX, 75080}
\email{att140830@utdallas.edu}
\bigskip

\bigskip

\begin{abstract} We study the behavior of   the degree of the colored Jones polynomial and the boundary slopes of knots under  the operation of cabling. We show that, under certain hypothesis on this degree, if a knot $K$ satisfies the Slope Conjecture then a $(p, q)$-cable of $K$ satisfies the conjecture,
provided that $p/q$ is not a Jones slope of $K$.
As an application we  prove the Slope Conjecture for iterated cables of adequate knots and for iterated torus knots.
Furthermore we show that, for these knots, the degree of the  colored Jones polynomial also
determines the topology of a surface that satisfies the Slope Conjecture.
We also state a conjecture   suggesting a topological interpretation of 
 the linear terms of the degree of the colored Jones polynomial (Conjecture \ref{conj}), and we prove it for the following classes of knots:
 iterated  torus knots and iterated cables of adequate knots, iterated cables
of several non-alternating knots with up to nine crossings, pretzel knots of type $(-2, 3, p)$ and their cables, and two-fusion knots.


\bigskip

\bigskip

\noindent {2010 {\em Mathematics Classification:} {\rm Primary 57N10. Secondary 57M25.}\\

\noindent{\em Key words and phrases: {\rm adequate knot,  boundary slope,  cable knot, colored Jones polynomial,  essential surface,
Jones slope, Slope Conjecture.}}}
\end{abstract}
\thanks {\today}
\thanks{E. K. was partially supported in part by NSF grants DMS--1105843 and DMS--1404754}

\maketitle

\section{Introduction}

\subsection{The Slope Conjecture} For a knot $K \subset S^3$, let $n(K)$ denote a tubular neighborhood of
$K$ and let $M_K:=\overline{ S^3\setminus n(K)}$ denote the exterior of
$K$. Let $\langle \mu, \lambda \rangle$ be the canonical
meridian--longitude basis of $H_1 (\bdy n(K))$.  An element $a/b \in
{\QQ}\cup \{ 1/0\}$ is called a \emph{boundary slope} of $K$ if there
is a properly embedded essential surface $(S, \bdy S) \subset (M_K,
\bdy n(K))$, such that  $\bdy S$ represents $a \mu + b \lambda \in
H_1 (\bdy n(K))$.  Hatcher showed that every knot $K \subset S^3$
has finitely many boundary slopes \cite{hatcher}.
We will use $bs_K$ to denote the set of boundary slopes of $K$.

For a positive integer $n$, let $J_K(n) \in \BZ[v^{\pm 1/2}]$ be the $n$-th colored Jones polynomial of $K$ with framing 0 \cite{Jo, RT}, normalized so that $$J_{\text{unknot}}(n) = \frac{v^{n/2}- v^{-n/2}}{v^{1/2} -v^{-1/2}}.$$ 
Here $v=A^{-4}$, where $A$ is the variable in the Kauffman bracket \cite{Ka}.

For a sequence $\{x_n\}$, let $\{x_n\}'$ denote the set of its cluster points. Let $d_+[J_K(n)]$ denote the highest degree of $J_K(n)$ in $v$,
and let $d_-[J_K(n)]$ denote the lowest degree. Elements of the sets 
$$js_K:= \left\{ 4n^{-2}d_+[J_K(n)]  \right\}' \quad
 \mbox{and} \quad js^*_K:= \left\{ 4n^{-2}d_-[J_K(n)] \right\}' $$
 are called {\em Jones slopes} of $K$. 
 Garoufalidis   \cite{ga-quasi} showed that every knot has  finitely many Jones slopes.
Furthermore,   he formulated the following conjecture and he  verified it for alternating knots, non-alternating knots with up to nine crossings, torus knots, 
and for the family ($-2, 3, p)$ of 3-string pretzel  knots \cite{Ga-slope}.

\begin{conjecture} [Slope Conjecture] For every knot $K \subset S^3$ we have 
$$(js_K \cup js^*_K) \subset bs_K.$$
\end{conjecture}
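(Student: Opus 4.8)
The plan is to treat the Slope Conjecture not as a single statement to be proved for an arbitrary $K$ all at once---this is genuinely open---but as a property to be \emph{established on a family of building-block knots and then propagated through satellite operations}. The starting observation is that, behind Garoufalidis's finiteness theorem, each of the degrees $d_+[J_K(n)]$ and $d_-[J_K(n)]$ is eventually a quadratic quasi-polynomial in $n$; consequently $js_K$ and $js^*_K$ consist of $4$ times the (finitely many) leading quadratic coefficients. The conjecture thus reduces to a concrete matching problem: realize each such rational number as the boundary slope of a properly embedded essential surface in $M_K$.

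First I would dispose of the base cases where both sides are independently computable. For an adequate knot $K$ I would expand the Kauffman bracket of a reduced adequate diagram over its all-$A$ and all-$B$ states, read off the extremal degrees $d_\pm[J_K(n)]$ from the spanning state surfaces $S_A$ and $S_B$, and use the fact that these state surfaces are essential to conclude that the resulting leading coefficients lie in $bs_K$. For a torus knot I would instead use the explicit Seifert-fibered structure of $M_K$, whose essential surfaces and their slopes are classified, together with the known closed formula for $J_K(n)$, and match the leading quadratic coefficient of the degree with the slope of the relevant essential annulus or cabling surface. This settles $(js_K\cup js^*_K)\subset bs_K$ for the two seed families.

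The inductive step is the cabling mechanism. On the quantum side I would derive a cabling formula expressing the colored Jones polynomial of a $(p,q)$-cable of $K$ as a finite combination of the polynomials $J_K(j)$, extract from it how $d_\pm$ transforms, and thereby obtain a recursion computing the Jones slopes of the cable from those of $K$. On the topological side I would construct the candidate essential surface in the cable exterior by gluing an essential surface carrying the predicted slope in $M_K$ to the cabling annulus (equivalently, the Seifert piece of the cable space), verify incompressibility and boundary-incompressibility of the result, and compute its boundary slope. Matching the two computations propagates the conjecture from $K$ to its cables, as long as $p/q$ avoids the finitely many exceptional Jones slopes of $K$; iterating then yields the conjecture for all iterated cables of adequate knots and for iterated torus knots.

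The hard part---and the reason the conjecture remains open in general---is that this strategy provides no traction for a knot lacking either a recognizable diagrammatic structure (to pin down $d_\pm$ combinatorially) or a Seifert-fibered/satellite structure (to pin it down geometrically). For a generic hyperbolic knot there is neither a closed form nor a manageable description of the extremal degree of $J_K(n)$, and, more fundamentally, no known topological mechanism that manufactures an essential surface whose slope equals a prescribed leading quadratic coefficient of that degree. The conjecture posits a still-mysterious bridge between the combinatorial data of the colored Jones polynomial and the geometry of incompressible surfaces; the present approach only crosses that bridge where both endpoints are independently accessible and where a surface-level operation faithfully mirrors the quantum cabling formula, which is exactly the obstacle to extending it beyond satellites of the seed families.
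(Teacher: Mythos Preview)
The statement you were asked to prove is the Slope Conjecture itself, which is genuinely open; the paper does not prove it but only states it as a conjecture and then verifies it for certain families. You correctly recognize this, and the strategy you outline for the partial results---establish the inclusion for adequate knots and torus knots via state surfaces and explicit formulas, then propagate it to cables by matching a cabling formula for $J_{K_{p,q}}(n)$ against a gluing construction of essential surfaces in the cable exterior---is essentially the approach the paper takes in Theorems~\ref{bdry slope}, \ref{period2}, and \ref{main:js}.
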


 Futer, Kalfagianni and Purcell \cite{FKP}  verified the conjecture
for \emph{adequate knots} (see Definition \ref{defi:adequate} for terminology). The works of Garoufalidis and Dunfield \cite{DG} and
Garoufalidis and van der Veen \cite{GV} verified the conjecture for a certain 2-parameter family of closed 3-braids, called 2-fusion knots. More recently, Lee and van der Veen \cite{leV} have proved the conjecture for several more 3-string pretzel knots.

In this paper we study the behavior of the  boundary slopes and the Jones slopes of knots under the operation of cabling and prove the Slope Conjecture for
  cables of
several classes of knots. We also formulate, and verify for several classes of knots,  conjectures providing  topological interpretations of the linear terms
of the degree of the colored Jones polynomial (Conjectures \ref{slopeaug} and \ref{conj}). To state our results we need some preparation.

\subsection{Cable knots}

Suppose $K$ is a knot with framing 0, and $p,q$ are coprime integers.
The $(p,q)$-cable $K_{p,q}$ of $K$ is the $0$-framed satellite of $K$ with
pattern $(p,q)$-torus knot (see Section \ref{boundary-slopes} for more details).
In the statements of results below, and throughout the paper,  we will assume that our cables are non-trivial in the  sense  that $\abs{q}>1$.

\begin{theorem} \label{bdry slope}
 For every knot $K \subset S^3$ and  $(p, q)$ coprime  integers
we have 
 $$\left( q^2 bs_K \cup \{pq\} \right) \subset bs_{K_{p,q}}.$$
\end{theorem}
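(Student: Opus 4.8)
The plan is to exhibit explicit essential surfaces in the cable exterior $M_{K_{p,q}}$ realizing each of the claimed slopes. The key observation is that the cable space $C = \overline{n(K) \setminus n(K_{p,q})}$ — the region between $K$ and its cable — is a Seifert fibered space over an annulus with one exceptional fiber of order $q$, and $M_{K_{p,q}}$ decomposes as $M_K \cup_{\bdy n(K)} C$ along an incompressible torus. So I would split the argument into two cases according to which slope is being realized.

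First, for the slope $pq$: the cable knot $K_{p,q}$ lies on the torus $\bdy n(K)$, and the two complementary annuli on that torus (in $M_K$ and in $C$) that it bounds give, together, a properly embedded annulus $A$ in $M_{K_{p,q}}$ with $\bdy A \subset \bdy n(K_{p,q})$. I would compute that the boundary slope of this annulus, measured in the canonical meridian–longitude framing of $K_{p,q}$ with framing $0$, is exactly $pq$; this is the standard computation relating the framing of a $(p,q)$-cable to the surface framing of the torus annulus, and is where I must be careful with sign and normalization conventions (the $0$-framing of $K_{p,q}$ differs from the framing induced by $\bdy n(K)$ by $pq$). Essentiality of $A$ is straightforward since a vertical annulus in a Seifert fibered piece glued to an essential annulus is essential, or one can argue directly that it is incompressible and $\bdy$-incompressible.

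Second, for the slopes in $q^2 bs_K$: given a boundary slope $a/b$ of $K$ realized by an essential surface $(S, \bdy S) \subset (M_K, \bdy n(K))$, I would cap off or rather extend $S$ through the cable space $C$. The curves $\bdy S$ on $\bdy n(K)$ have slope $a/b$; inside the Seifert fibered cable space $C$, a curve of slope $a/b$ on the boundary torus $\bdy n(K)$ is isotopic to a collection of fibers (when it is not the slope of the exceptional-fiber-compatible curve, i.e.\ avoiding the slope $p/q$ which is excluded by hypothesis $p/q \notin js_K$, though here the constraint is automatically handled since $a/b$ ranges over $bs_K$) and bounds a vertical sub-surface — a union of vertical annuli — in $C$ meeting $\bdy n(K_{p,q})$. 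Each component of $\bdy S$ of slope $a/b$ on $\bdy n(K)$, when pushed across $C$ to $\bdy n(K_{p,q})$, becomes a curve whose slope computes to $q^2(a/b)$: the factor $q^2$ arises because the cable wraps $q$ times longitudinally, so the longitude of $K$ maps to $q$ times the longitude of $K_{p,q}$ (up to the meridian direction), and the induced map on slopes is multiplication by $q^2$. Gluing $S$ to these vertical annuli produces a properly embedded surface $S'$ in $M_{K_{p,q}}$ of boundary slope $q^2 a/b$, and I would verify essentiality by the standard incompressible-annulus gluing argument: an incompressible surface glued along essential curves to vertical annuli in a Seifert fibered space over a torus-with-incompressible-boundary remains incompressible and $\bdy$-incompressible.

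The main obstacle I anticipate is bookkeeping rather than conceptual: pinning down the precise slope arithmetic — that crossing the cable space multiplies slopes by $q^2$ and shifts the preferred framing by $pq$ — requires a careful choice of meridian–longitude bases on both $\bdy n(K)$ and $\bdy n(K_{p,q})$ and tracking how $H_1$ of the cable space boundary tori interchange these, and getting every sign right. A secondary technical point is confirming essentiality of the glued-up surfaces in all cases, including when $\bdy S$ is empty (a closed essential surface in $M_K$, giving the slope $1/0 \in bs_K$ and correspondingly $q^2 \cdot \infty = \infty \in bs_{K_{p,q}}$, which the statement should be read as including), and handling the case where $S$ is an annulus or otherwise low-complexity so that the glued surface might compress — but since $S$ was already essential in $M_K$ and the gluing is along essential curves into a Seifert piece, this does not occur. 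I would organize the write-up by first recalling the Seifert structure on $C$ and fixing conventions, then treating the $pq$ slope, then the $q^2 bs_K$ slopes.
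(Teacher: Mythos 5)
Your treatment of the slope $pq$ via the cabling annulus is fine and matches the paper. But the main case --- realizing $q^2 a/b$ --- has a genuine gap: the surface that carries a boundary slope $a/b \neq p/q$ across the cable space $\C$ is \emph{not} a union of vertical annuli. In the Seifert fibration of $\C$ over the annulus (one exceptional fiber of order $q$), the regular fibers on $T_{+}=\bdy n(K)$ all have slope $p/q$ and on $T_{-}=\bdy n(K_{p,q})$ all have slope $pq$; consequently the only vertical annulus meeting both boundary tori joins slope $p/q$ on $T_+$ to slope $pq$ on $T_-$ (this is the Gordon--Litherland classification, \cite[Lemma 3.1]{GL}). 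A curve of slope $a/b$ on $\bdy n(K)$ is not isotopic to a union of fibers and does not bound vertical annuli reaching $T_-$, so your proposed mechanism cannot produce the factor $q^2$ at all. What is actually needed is a \emph{horizontal} essential surface $F\subset \C$ (a branched cover of the base annulus, with $\chi(F)=n'(1-\abs{q})<0$), whose existence the paper obtains homologically: the inclusions $H_1(T_{\pm};\QQ)\to H_1(\C;\QQ)$ are isomorphisms by Mayer--Vietoris, giving a bijection of slopes $\phi:\mathcal S_-\to\mathcal S_+$, and a relative class with the prescribed boundary is realized by an embedded essential surface via \cite[Lemma 6.6]{hempel}; the $q^2$ arithmetic then comes from computing $\bdy F$ on the two tori for the horizontal surface. (You also import the hypothesis $p/q\notin js_K$, which belongs to Theorem \ref{period2}, not to this statement.)

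A secondary omission is the gluing bookkeeping: the number of components of $\bdy F\cap T_+$ need not equal $\abs{\bdy S}$, so one cannot simply attach $F$ to $S$; the paper takes $y$ parallel copies of $F$ and $x$ copies of $S$ (where $x,y$ are the respective numbers of boundary components on $T_+$) so the boundaries match, and then proves essentiality of the union by an outermost-arc argument across the incompressible torus $T_+$ (Lemmas \ref{basic}--\ref{glue}). Your appeal to a ``standard incompressible-annulus gluing argument'' presupposes the vertical-annulus picture that fails, so the essentiality step also needs to be redone for the horizontal surface. Finally, note that a closed essential surface contributes no boundary slope under the definition used here, so the $1/0$ case you raise does not arise in the way you describe.
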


To continue, we recall
that  for any knot $K\subset S^3$
the degrees $ d_+[J_{K}(n)] $ and $ d_-[J_{K}(n)] $ are quadratic quasi-polynomials in $n$  \cite{ga-quasi}. This implies that their coefficients are periodic functions $\BN \to \BQ$.
The common period  of $d_+[J_K(n)]$ and $d_-[J_K(n)]$ is called \emph{the period of $K$}, denoted  by $\pi(K)$. 

In this paper we are concerned with the Jones slopes 
for knots with $\pi(K)\leq 2$ and for knots where the leading coefficient  of $d_+[J_K(n)]$ becomes constant for $n$ large enough.
We show that the Jones slopes of these  knots behave similarly to  boundary slopes under cabling operations (Propositions \ref{quasi} and \ref{quasi-constant}).
In particular, for knots with period at most two, combining  our results about Jones slopes with Theorem \ref{bdry slope} we obtain the following.

\begin{theorem}
 \label{period2}
 Let $K$ be a knot such that, for $n \gg 0$,
$$
d_+[J_K(n)]=a(n)n^2+b(n)n+d(n) \  {\rm and } \  d_-[J_K(n)]=a^*(n)n^2+b^*(n)n+d^*(n)$$
are  quadratic quasi-polynomials of period $\le 2$, with $b(n) \le 0$ and $b^*(n) \ge 0$. Suppose $\frac{p}{q} \notin js_K$. 
Then, we have 
$$ js_{\K} \subset  \big( q^2js_{K} \cup \{pq/4\} \big) \ {\rm and} \  js^*_{\K} \subset  \big( q^2js^*_{K} \cup \{pq/4\} \big).$$
Furthermore, if $(js_K \cup js^*_K) \subset bs_K$ we have $(js_{\K} \cup js^*_{\K}) \subset bs_{\K}$.
 \end{theorem}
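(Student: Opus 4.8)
The plan is to derive Theorem~\ref{period2} by combining two ingredients: the cabling behaviour of the degree of the colored Jones polynomial for a knot of period $\le 2$ (Propositions~\ref{quasi} and~\ref{quasi-constant}), and the boundary-slope inclusion of Theorem~\ref{bdry slope}. The starting point is the Rosso--Jones cabling formula, which writes $J_{\K}(n)$ as a finite alternating sum $\sum_k c_k(n)\,J_K(\sigma_k(n))$, where $k$ ranges over an arithmetic progression of length $O(n)$, the colors $\sigma_k(n)$ are linear in $k$, and the $c_k(n)$ are monomials whose $v$-degree is quadratic in $(k,n)$ with leading part $\tfrac{pq}{4}(n^2-1)-pk(qk+1)$. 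Taking highest degrees term by term and noting that, since the $c_k$ are monomials and the maximizing index is essentially unique, no cancellation of the extremal term occurs, one reduces $d_+[J_{\K}(n)]$ to $\max_k\big(\tfrac{pq}{4}(n^2-1)-pk(qk+1)+d_+[J_K(|\sigma_k(n)|)]+O(n)\big)$, and dually $d_-[J_{\K}(n)]$ to a $\min_k$.

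The second step, which is the content of Propositions~\ref{quasi} and~\ref{quasi-constant} and which I would invoke rather than reprove, is to solve this optimization. Writing $d_+[J_K(m)]=a(m)m^2+b(m)m+d(m)$ with coefficients of period $2$ and $b\le 0$, and substituting $k=\lambda n$, the bracketed expression becomes, on each residue class of $n$ modulo $2$ and to leading order, $n^2$ times a quadratic $g(\lambda)$ that takes a value in $q^2js_K$ at the extreme value of its parameter and the exceptional value $pq/4$ at $\lambda=0$. The hypothesis $\tfrac pq\notin js_K$ is exactly the non-degeneracy condition: it keeps these two values distinct, equivalently keeps $g$ non-constant, so that the maximum of $g$ is attained either at the extreme parameter, contributing a cluster value in $q^2js_K$, or at $\lambda=0$, contributing the cluster value $pq/4$. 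The sign conditions $b(n)\le 0$ and $b^*(n)\ge 0$ are used to absorb the $O(n)$ corrections: they guarantee that these corrections cannot relocate the optimizing $k$ off these two positions or across residue classes, so every subsequential limit of $4n^{-2}d_+[J_{\K}(n)]$ lies in $q^2js_K\cup\{pq/4\}$ and every subsequential limit of $4n^{-2}d_-[J_{\K}(n)]$ in $q^2js^*_K\cup\{pq/4\}$. This is the first assertion.

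For the ``furthermore'' part, assume $(js_K\cup js^*_K)\subset bs_K$. By the inclusions just obtained it is enough to place $q^2js_K$, $q^2js^*_K$, and the exceptional value into $bs_{\K}$. The first two are immediate: $q^2js_K\subset q^2bs_K\subset bs_{\K}$ and $q^2js^*_K\subset q^2bs_K\subset bs_{\K}$ by Theorem~\ref{bdry slope}. The exceptional value is the Jones slope carried by the degenerate ($\lambda=0$) term of the cabling formula, which reflects the cabling annulus of $\K$; since that annulus is essential, the corresponding slope belongs to $bs_{\K}$ --- it is the slope provided by the final clause of Theorem~\ref{bdry slope}. Hence $(js_{\K}\cup js^*_{\K})\subset bs_{\K}$, completing the proof.

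The final assembly is short, so the genuine difficulty is the second step --- Propositions~\ref{quasi} and~\ref{quasi-constant}. There one must control the cabling formula tightly enough both to rule out cancellation of the top-degree term among the $O(n)$ summands and, more seriously, to show that the linear-in-$n$ data never move the optimizing index; it is the latter that forces the restrictions to period $\le 2$ and to $b(n)\le 0$, $b^*(n)\ge 0$, since outside that regime the optimizer can drift with $n$, the quasi-polynomial structure need not be inherited cleanly, and the stated inclusions can fail.
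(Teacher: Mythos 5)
Your proposal is correct and takes essentially the same route as the paper: Theorem \ref{period2} is obtained by invoking Proposition \ref{quasi} (applied to $K$ for $d_+$ and, via the mirror image, for $d_-$) together with the boundary-slope inclusion of Theorem \ref{main:bs}, which is exactly the paper's proof of Theorem \ref{thm-quasi}. The only cosmetic difference is that Proposition \ref{quasi-constant} is not needed in the period $\le 2$ case.
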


The proof of Theorem \ref{period2} reveals that the properties that
 $b(n) \le 0$ and $b^*(n) \ge 0$ are preserved under cabling. We conjecture that these properties hold for all knots,
and that   $b(n)$ and $b^*(n)$  detect the presence of essential annuli in the knot complement.  This is stated in Conjecture
\ref{conj}, which we have verified for all the knots for which the degrees $d_+[J_K(n)]$ and $d_-[J_K(n)]$
are known.

Note that since we take  $\abs{q}>1$, the hypothesis that $p/q$  is not a Jones slope of $K$ will automatically be satisfied  for knots that have all of their Jones slopes
 integers.
A large class of knots with integer Jones slopes is the class of \emph{adequate} knots, which
includes alternating knots, Montesinos knots of length at least four,  pretzel knots with at least four strings
and Conway sums of strongly alternating tangles. The class of \emph{semi-adequate} knots (knots that are
$A$-- or  $B$--adequate) is much broader including all but a handful of prime knots up to 12 crossings, all Montesinos and pretzel knots, positive knots, torus knots, and closed 3-braids. 
The reader is referred to Section 3 below for the precise definition (Definition \ref{defi:adequate}) and to  \cite{fkp:survey, FKP-guts, FKP-semi, adam} and references therein
for more details and examples of semi-adequate knots.

\begin{theorem}
\label{main:js} Let $K$ be a knot and let  $K'$ be an iterated cable knot of $K$. 
\begin{enumerate}
\item If $K$ is a $B$-adequate knot, then $js_{K'} \subset bs_{K'}$.

\item If $K$ is an $A$-adequate knot, then $js^*_{K'} \subset bs_{K'}$.
\end{enumerate}
Hence, if $K$ is an adequate knot then $K'$ satisfies the Slope Conjecture.
\end{theorem}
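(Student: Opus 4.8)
The plan is to induct on the number $m$ of successive cablings used to build $K'$ from $K$. Write $K^{(0)}=K$ and, for coprime pairs $(p_i,q_i)$ with $\abs{q_i}>1$, put $K^{(i)}=\big(K^{(i-1)}\big)_{p_i,q_i}$, so that $K'=K^{(m)}$. For part (1) I will show, by induction on $i$, that whenever $K$ is $B$-adequate the knot $K^{(i)}$ satisfies the following \emph{package}: (a) $d_+[J_{K^{(i)}}(n)]$ is a quadratic quasi-polynomial whose leading coefficient is constant for $n\gg 0$ and whose linear coefficient is $\le 0$; (b) $js_{K^{(i)}}\subset bs_{K^{(i)}}$; (c) $js_{K^{(i)}}\subset\ZZ$. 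Part (1) is the case $i=m$ of (b). Part (2) follows by applying (1) to the mirror image $\overline K$, which is $B$-adequate exactly when $K$ is $A$-adequate: the mirror of an iterated cable of $K$ is an iterated cable of $\overline K$, and since mirroring substitutes $v\mapsto v^{-1}$ in the colored Jones polynomial one has $js^*_{K'}=-js_{\overline{K'}}$ and $bs_{K'}=-bs_{\overline{K'}}$, so the inclusion $js_{\overline{K'}}\subset bs_{\overline{K'}}$ given by (1) is equivalent to $js^*_{K'}\subset bs_{K'}$. Finally, an adequate knot is both $A$- and $B$-adequate, so (1) and (2) give $(js_{K'}\cup js^*_{K'})\subset bs_{K'}$, which is the Slope Conjecture for $K'$.

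For the base case $i=0$ I invoke Futer--Kalfagianni--Purcell \cite{FKP} together with the explicit degree formula for semi-adequate knots: when $K$ is $B$-adequate, $d_+[J_K(n)]$ is a genuine quadratic polynomial in $n$ (so in particular its leading coefficient is constant), its leading and linear coefficients are determined by a $B$-adequate diagram with the linear coefficient $\le 0$, and the resulting Jones slope is the integral boundary slope of the essential all-$B$ state surface. This gives (a), (b), (c) at $i=0$. If one also wants the case where $K$ is the unknot, so that $K'$ is an iterated torus knot, one replaces this input with the explicit computation of $d_+[J_{T(p_1,q_1)}(n)]$, which has the same three features.

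For the inductive step, assume the package (a)--(c) for $K^{(i-1)}$ and set $(p,q)=(p_i,q_i)$. Since $\abs{q}>1$ and $\gcd(p,q)=1$, the rational $p/q$ is not an integer, hence by (c) it is not a Jones slope of $K^{(i-1)}$. The cabling behavior of the top degree of the colored Jones polynomial established in Propositions \ref{quasi} and \ref{quasi-constant} (of which the top-degree half of Theorem \ref{period2} is a consequence) then applies to $K^{(i-1)}$ with cable parameters $(p,q)$. It gives: $d_+[J_{K^{(i)}}(n)]$ is again a quadratic quasi-polynomial with eventually constant leading coefficient --- namely the larger of $q^2$ times the leading coefficient of $d_+[J_{K^{(i-1)}}(n)]$ and the annular contribution $(pq/4)\,n^2$ --- and again with linear coefficient $\le 0$, which is (a) for $K^{(i)}$; and $js_{K^{(i)}}\subset q^2\,js_{K^{(i-1)}}\cup\{pq\}$, whose right side lies in $\ZZ$ by (c) for $K^{(i-1)}$, which is (c). For (b): (b) for $K^{(i-1)}$ gives $js_{K^{(i-1)}}\subset bs_{K^{(i-1)}}$, while Theorem \ref{bdry slope} gives $q^2\,bs_{K^{(i-1)}}\cup\{pq\}\subset bs_{K^{(i)}}$; combining these yields $js_{K^{(i)}}\subset bs_{K^{(i)}}$. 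This closes the induction, completing the proof of (1), and hence of (2) and the final assertion.

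The difficulty is not any single step but the combined bookkeeping: one must choose the package (a)--(c) so that it is genuinely inductive, and the crucial member is (c). Carrying the integrality of the Jones slopes through every stage is precisely what makes the standing hypothesis ``$p_i/q_i$ is not a Jones slope of $K^{(i-1)}$'', under which the cabling results were proved, hold automatically, so no ad hoc verification is ever needed. Two further points need care. First, one must use the correct one-sided cabling statement --- top degrees and $js$ under $B$-adequacy, bottom degrees and $js^*$ under $A$-adequacy --- since only the relevant one-sided sign hypothesis is available. Second, the invariant that really propagates is ``leading coefficient eventually constant'' rather than ``period $\le 2$'': a cable of a period-one knot need not have period $\le 2$, but the formula for the leading coefficient given above shows its top degree still carries a single Jones slope, so Proposition \ref{quasi-constant} continues to apply.
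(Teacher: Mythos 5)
Your overall architecture is the same as the paper's: induct on the number of cablings, carry the integrality of the Jones slopes as the invariant that makes the standing hypothesis ``$p_i/q_i$ is not a Jones slope'' automatic, get the base case from the $B$-adequate degree formula and the essential state surface, combine the cabling formula for $d_+$ with Theorem \ref{bdry slope} at each step, and deduce part (2) by mirroring. However, your final paragraph introduces a genuine gap. You replace the invariant ``period $\le 2$'' by ``leading coefficient eventually constant'' on the grounds that ``a cable of a period-one knot need not have period $\le 2$,'' and you then lean on Proposition \ref{quasi-constant}. The premise is false in the relevant regime: Proposition \ref{quasi} states precisely that if $d_+[J_K(n)]$ is a quadratic quasi-polynomial of period $\le 2$ with $b(n)\le 0$ and $p/q\notin\{4a(n)\}$, then $d_+[J_{K_{p,q}}(n)]$ is again of period $\le 2$ with $B(n)\le 0$; so the period bound \emph{is} preserved, and that is exactly the invariant the paper propagates (together with $4A\in\BZ$, $2B\in\BZ$).

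The reason this matters, and why your substitute invariant does not close the induction, is that Proposition \ref{quasi-constant} is not available under the hypothesis ``$p/q\notin js_{K^{(i-1)}}$'' alone: it requires $p-(4a-M_1)q<0$ or $p-(4a+M_1)q>\max\{0,M_2\}$, where $M_1,M_2$ are built from the variation of $b(n)$ and $d(n)$ over residues mod $2$. These conditions coincide with ``$p/q\neq 4a$'' only when $M_1=0$ and $M_2\le 0$, which is guaranteed when the period is $\le 2$ but not by your package (a)--(c); once you allow $B(n)$ and $D(n)$ of unbounded period at later stages, there are pairs $(p,q)$ with $4a-M_1<p/q<4a+M_1$ and $p/q\neq 4a$ for which neither alternative holds, and you have no cabling statement to invoke. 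The repair is simply to revert to the paper's invariant: $B$-adequate knots (and the torus-knot base case) have period $\le 2$ with nonpositive linear term and integral slopes, and Proposition \ref{quasi} shows all three properties persist under every admissible cabling, so Proposition \ref{quasi-constant} is never needed here. With that change your argument coincides with the paper's proof of Theorem \ref{slopeadequate} (minus the Euler characteristic bookkeeping, which Theorem \ref{main:js} does not require).
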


An iterated torus knot is an iterated cable of the trivial knot.  As a corollary of Theorem \ref{main:js} we have the following.
\begin{corollary} \label{torusi}
Iterated torus knots satisfy the Slope Conjecture. 
\end{corollary}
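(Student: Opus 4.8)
The plan is to derive Corollary~\ref{torusi} directly from Theorem~\ref{main:js}, the only extra ingredient being the observation (already recorded in the sentence preceding the corollary) that an iterated torus knot is an iterated cable of the unknot. So first I would fix the innermost knot: given an iterated torus knot $K'$, write it as the end of a finite chain $U = K_0, K_1, \dots, K_m = K'$ where each $K_{i+1}$ is a $(p_{i+1}, q_{i+1})$-cable of $K_i$ and $K_1 = T(p_1,q_1)$; thus $K'$ is an iterated cable of the unknot $U$ in the sense of Theorem~\ref{main:js}. Next I would note that $U$ is alternating, hence adequate (Definition~\ref{defi:adequate}), so it is simultaneously $A$-adequate and $B$-adequate. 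Applying parts (1) and (2) of Theorem~\ref{main:js} with $K = U$ then gives $js_{K'} \subset bs_{K'}$ and $js^*_{K'} \subset bs_{K'}$, i.e. $K'$ satisfies the Slope Conjecture.

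The one mild point to attend to is the standing convention $\abs{q} > 1$ for cables. In a chain realizing a given iterated torus knot, any step with $\abs{p} = 1$ or $\abs{q} = 1$ either reproduces $K_i$ or collapses the pattern to the unknot, so such steps can be deleted without changing $K' = K_m$; after this reduction every cabling operation in the chain is a genuine $(p,q)$-cable with $\abs{q} > 1$, and the innermost knot is still $U$. Hence the hypotheses under which Theorem~\ref{main:js} was proved are met with no loss of generality. (The auxiliary hypothesis that $p_{i+1}/q_{i+1}$ is not a Jones slope of $K_i$ is already built into the proof of Theorem~\ref{main:js}, via the fact that iterated cables of the unknot have integer Jones slopes, so nothing further needs checking.)

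I do not expect a genuine obstacle here: all of the substance — the behavior of boundary slopes under cabling (Theorem~\ref{bdry slope}), the behavior of the top and bottom degrees of the colored Jones polynomial under cabling (Propositions~\ref{quasi} and \ref{quasi-constant}), and the inductive bookkeeping across the cabling stages — is carried out inside Theorem~\ref{main:js}. The corollary's content is therefore entirely the verification that the hypothesis of that theorem holds at the bottom of the iteration, namely that $U$ is adequate, which is immediate. Accordingly the proof I would write is two or three lines: \emph{an iterated torus knot is an iterated cable of the unknot, the unknot is adequate, so Theorem~\ref{main:js} applies.}
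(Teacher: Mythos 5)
Your proof is correct and is essentially the paper's own argument: the paper derives Corollary \ref{torusi} immediately from Theorem \ref{main:js} after noting that an iterated torus knot is an iterated cable of the trivial knot, which is adequate (and the proof of Theorem \ref{slopeadequate} explicitly includes the trivial-knot base case, where $b_U(n)=1/2>0$ forces a separate computation via the torus-knot formula rather than Proposition \ref{quasi}). Your additional remarks about discarding degenerate cabling steps and about integrality of Jones slopes are consistent with what the paper's proof of Theorem \ref{main:js} already handles.
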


Theorem \ref{period2} also  applies to several non-alternating  prime knots with up to nine crossings
(see Corollary \ref{low}). We should mention that Motegi-Takata \cite{MoTa} used Theorem  \ref{period2}
to  generalize Corollary \ref{torusi} to all  knots of zero Gromov norm (graph knots).

The proofs of Theorems  \ref{bdry slope} and \ref{period2} reveal that there is a remarkable similarity 
in the behaviors, under cabling,  of the  linear terms of $d_+[J_K(n)]$ and $d_-[J_K(n)]$ and the Euler characteristic of the essential surfaces
``selected" by the Slope Conjecture.  We conjecture that
the cluster points of the sets $\{2b_K(n)\}$ and $\{2b_K^{*}(n)\}$ contain information about the topology of essential surfaces
 that satisfy the Slope Conjecture for $K$. 
To state the conjecture, 
let  $\ell d_+[J_K(n)]$ denote the linear term of $d_+[J_K(n)]$ and let
$$jx_K:= \left\{ 2n^{-1}\ell d_+[J_K(n)]  \right\}'=\{2b_K(n)\}'.$$
\begin{conjecture} [{\rm Strong Slope Conjecture}] \label{slopeaug} Let $K$ be a knot and $a/b\in js_K$, with $b>0$ and $\gcd(a, b)=1$,  a  Jones slope of $K$. Then there is an essential surface $S\subset M_K$, with $\abs{\partial S}$  boundary components, 
and such that each component of $\partial S$ has slope $a/b$ and
$$\frac{\chi(S)}{{\abs{\partial S} b}} \in jx_K.$$
\end{conjecture}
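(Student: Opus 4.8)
The plan is to obtain Conjecture \ref{slopeaug} as a refinement of Theorems \ref{bdry slope}, \ref{period2} and \ref{main:js}: in each case where the Slope Conjecture is established above, the essential surface realizing the Jones slope is explicit enough that one can also compute its Euler characteristic and number of boundary components and check the stronger identity. So the strategy is to re-run those proofs while carrying along the pair $(\chi(S),\abs{\partial S})$, and to handle the sporadic examples individually.

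First, for a $B$-adequate knot $K$ I would take $S=S_B$, the spanning surface carried by the all-$B$ state of a $B$-adequate diagram $D$. By \cite{FKP} this surface is essential, each of its boundary components has slope equal to the (integral, hence $b=1$) Jones slope $js_K$, and $d_+[J_K(n)]$ is a genuine period-one quadratic polynomial whose leading and linear coefficients are read off the all-$B$ state graph $\GB$ of $D$. What remains is bookkeeping: express $\ell d_+[J_K(n)]=b_K\, n$ in terms of the crossing number and the combinatorics of $\GB$, express $\chi(S_B)$ and $\abs{\partial S_B}$ in the same terms, and verify $\chi(S_B)/\abs{\partial S_B}=2b_K$, i.e. $\chi(S_B)/(\abs{\partial S_B}\,b)\in jx_K$. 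The $A$-adequate (hence $js^*$) case is the mirror image.

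Next, for iterated cables I would induct on the number of cabling operations, using the cabling formulas that underlie Theorems \ref{bdry slope} and \ref{period2} (Propositions \ref{quasi} and \ref{quasi-constant}). By Theorem \ref{period2}, a Jones slope of $\K$ is either $q^2$ times a Jones slope $a'/b'$ of $K$ or the cabling-annulus slope. In the first case, start from the essential surface $S'\subset M_K$ furnished by the inductive hypothesis for $a'/b'$, take the appropriate number of parallel copies, and glue in copies of the essential annulus of the cable space $\C$ exactly as in the proof of $q^2 bs_K\subset bs_{\K}$; the colored Jones cabling formula then expresses $\ell d_+[J_{\K}(n)]$ as the matching linear combination of $\ell d_+[J_K(n)]$ and a framing term, and one checks that $\chi(S)/(\abs{\partial S}\,b)$ equals the resulting cluster point of $\{2b_{\K}(n)\}$. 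In the second case, take $S$ to be the surface of that slope constructed in the proof of Theorem \ref{bdry slope} and compute $\chi(S)$ directly; here the hypotheses $\pi(K)\le 2$ and ``leading coefficient eventually constant'' are what make the relevant subsequences converge. Iterated torus knots are the special case $K=$ unknot. Finally, the sporadic families --- non-alternating knots with at most nine crossings, $(-2,3,p)$-pretzels and their cables, and two-fusion knots --- I would treat one at a time, matching the published formulas for $d_\pm[J_K(n)]$ against the known boundary-slope data and the Euler characteristics of the corresponding essential surfaces, drawing on \cite{DG, GV, leV} for the two-fusion and pretzel cases.

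The main obstacle is the exact bookkeeping of $\chi(S)$ and $\abs{\partial S}$ for the cabled surfaces. When $q^2(a'/b')$ is rewritten in lowest terms $a/b$, the number of boundary components of the ``parallel copies plus cable-space annuli'' surface depends on $\gcd$'s among $p$, $q$, $a'$ and $b'$, and one must choose the number of parallel copies so that the resulting surface is essential with the prescribed boundary and so that $\chi(S)/(\abs{\partial S}\,b)$ lands \emph{exactly} on the cluster point produced by the degree cabling formula, not merely on some rational of the right order of magnitude. Making the topological computation and the colored Jones computation agree on the nose, uniformly in the two cases $p/q\gtrless a'/b'$ and in the residue of $n$ modulo the period, is the crux; everything else is a matter of organizing the induction and the case-by-case checks.
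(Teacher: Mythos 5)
The statement you are trying to prove is a \emph{conjecture}, not a theorem of the paper: the paper never proves Conjecture \ref{slopeaug} in general, and indeed no general proof is known. What the paper does is verify it for specific families (iterated cables of adequate knots via Theorem \ref{slopeadequate} and Corollary \ref{integers}, the Montesinos knots of Table 2 and their cables via Corollary \ref{conjcables}, the $(-2,3,p)$-pretzel knots via Callahan's boundary-slope computations). Your proposal is, in substance, an outline of exactly that verification program --- state surface $S_B$ with $\chi(S_B)=v_B-c=2b_K$ and $\abs{\partial S_B}=1$ for $B$-adequate knots, induction on cabling with Euler-characteristic bookkeeping, case-by-case checks for the sporadic families --- so as a plan for the special cases it matches the paper. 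But it cannot be accepted as a proof of the statement as posed, and you should say explicitly that you are only verifying the conjecture for these families.

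Within the cabling step there is also a concrete error. You propose to realize the slope $q^2a'/b'$ by gluing parallel copies of $S'$ to ``copies of the essential annulus of the cable space $\C$.'' The surface in $\C$ used in the proof of Theorem \ref{main:bs} for this slope is \emph{not} an annulus: it is a horizontal surface $F$ with respect to the Seifert fibration, with $\chi(F)=n(1-\abs{q})<0$; the vertical annuli of $\C$ only produce the cabling slope $pq$ and the slope $p/q$. If you glued in annuli you would get $\chi(S)=\abs{q}\,\chi(S')$, which does not match the linear term $2B(n)=2\abs{q}b+(1-\abs{q})\abs{4aq-p}$ coming from Proposition \ref{quasi}; the correction term $(1-\abs{q})\abs{4aq-p}\,\abs{\partial S'}$ is precisely the contribution of $\chi(F)$, as recorded in Corollary \ref{integers}. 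That corollary also resolves the $\gcd$ worry you flag at the end: for an integral slope $a$ the surface $F$ meets $T_+$ in exactly $\abs{q}$ components and $T_-$ in a single component, so $\abs{\partial S}=\abs{\partial S'}$ and no further normalization of the number of parallel copies is needed.
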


Conjecture \ref{slopeaug}  implies a similar statement for $\{2b^*_K(n)\}'$ since it is know that
$d_-[J_K(n)] =d_+[J_K^*(n)]$, where $K^*$ denotes the mirror image of $K$.
An immediate corollary of Theorem \ref{slopeadequate} is the following:

\begin{corollary} Iterated cables of adequate knots satisfy the Strong Slope Conjecture. In particular, iterated torus knots satisfy the Strong Slope Conjecture.
\end{corollary}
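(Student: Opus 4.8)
The plan is to deduce the corollary from Theorem \ref{slopeadequate}, which I would prove by induction on the number of cabling operations needed to build $K'$ from the seed adequate knot $K$; once that theorem is in hand, the first assertion is just its restatement in the language of Conjecture \ref{slopeaug}, and the iterated--torus--knot case follows because an iterated torus knot is, by definition, an iterated cable of the trivial knot, which is (vacuously) adequate.

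For the base case I would check that an adequate knot satisfies the Strong Slope Conjecture on both sides. Since an adequate knot is simultaneously $A$-- and $B$--adequate, its degrees $d_\pm[J_K(n)]$ are honest quadratic polynomials in $n$; by Futer--Kalfagianni--Purcell \cite{FKP, FKP-guts} the leading coefficient of $d_+[J_K(n)]$ is, up to the normalising factor, the (integer) boundary slope of the all--$B$ Kauffman state surface $S_B$ of an adequate diagram of $K$. The substance of the base case is the identity matching the linear term $\ell d_+[J_K(n)]$ with $\chi(S_B)$ and the boundary--component count $\abs{\partial S_B}$; I would obtain it by combining the adequacy state--sum estimate for $\deg_v J_K(n)$ with the standard formula for $\chi(S_B)$ in terms of the reduced all--$B$ graph. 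The surface $S_A$ together with the mirror identity $d_-[J_K(n)]=d_+[J_{K^*}(n)]$ handles $js^*_K$.

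For the inductive step I would assume $K$ satisfies the Strong Slope Conjecture and has integer Jones slopes, and pass to a $(p,q)$--cable $\K$ with $\abs{q}>1$. Because $\gcd(p,q)=1$ and $\abs{q}>1$, the hypothesis $p/q\notin js_K$ of Theorem \ref{period2} is automatic, so the cabling formula for the colored Jones polynomial and Propositions \ref{quasi}, \ref{quasi-constant} show that for $n\gg 0$ the top degree $d_+[J_{\K}(n)]$ is governed either by a term giving a slope $q^2a$ with $a\in js_K$ or by the new slope $pq$. In the first case I would take, by induction, an essential surface $S\subset M_K$ with all boundary slopes equal to $a$ realising the predicted ratio, and extend it across the cabling region --- gluing $\abs{q}$ parallel copies of $S$ to a subsurface of the cabling annulus, exactly as in the proof of Theorem \ref{bdry slope} --- to get an essential surface $\widehat S\subset M_{\K}$ with all boundary slopes $q^2a$ and with $\chi(\widehat S)$, $\abs{\partial \widehat S}$ computed from $\chi(S)$, $\abs{\partial S}$, $p$, $q$. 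In the second case I would build the surface of slope $pq$ directly from the fibre surface of the $(p,q)$--torus--knot pattern. Either way the final step is to verify that $\chi(\widehat S)/(\abs{\partial\widehat S}\,b')$, with $b'$ the denominator of the cable slope, equals the relevant cluster point of $\{2b_{\K}(n)\}$ read off from the linear term of $d_+[J_{\K}(n)]$; since $q^2 js_K\cup\{pq\}\subset\BZ$ the inductive hypothesis of integer Jones slopes persists, so the induction runs, and the mirror statement covers $js^*$.

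The hard part will be the bookkeeping in the inductive step: extracting the exact linear coefficient of $d_+[J_{\K}(n)]$ from the cabling sum (delicate because several summands compete for the top degree at small $n$ while a single one dominates for $n\gg 0$), and checking that the cabled surface $\widehat S$ is genuinely essential in $M_{\K}$ --- incompressible and $\partial$--incompressible --- while pinning down $\chi(\widehat S)$ and $\abs{\partial\widehat S}$ precisely, so that the quantum and the topological computations agree on the nose. Essentiality of $\widehat S$ should follow from essentiality of $S$ in $M_K$ together with essentiality of the cabling annulus; the sign hypotheses $b(n)\le 0$ and $b^*(n)\ge 0$, which the proof of Theorem \ref{period2} shows persist under cabling, are what should force the correct candidate slope to occur and the normalised Euler characteristic to carry the right sign.
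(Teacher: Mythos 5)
Your overall route---deduce the corollary from Theorem \ref{slopeadequate}, prove that theorem by induction on the number of cabling operations, with the base case given by the state surfaces $S_A$, $S_B$ of an adequate diagram and the inductive step given by the cabling formula \eqref{cables} together with the surface construction of Theorem \ref{main:bs} and Corollary \ref{integers}---is exactly the paper's. However, there is a genuine gap in how you handle the ``in particular'' clause about iterated torus knots. You start the induction at the trivial knot, calling it ``(vacuously) adequate'' and asserting the base case goes through via state surfaces. But the standing hypothesis of Proposition \ref{quasi} (and hence of Theorem \ref{thm-quasi}/\ref{period2}) is $b(n)\le 0$, and for the unknot $d_+[J_U(n)]=(n-1)/2$, so $b_U(n)=1/2>0$. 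The inequality $2b=v_B-c\le 0$ comes from Lemma \ref{v_B-c}, whose proof uses $\chi(S_B)\le 0$ and therefore requires $K$ to be non-trivial. Consequently your inductive step cannot be applied to the very first cabling of the unknot: the concavity/monotonicity analysis in Subcase 1.2 of Proposition \ref{quasi} uses $b_1\le 0$ to locate the maximizing $k$, and with $b=1/2$ the dominant term of the cabling sum can sit at an interior $k$, which is precisely why $d_+[J_{T(p_1,q_1)}(n)]$ has the extra period-two correction $-\tfrac12\bigl(1+(-1)^n\bigr)(p_1-2)(q_1-2)$. The paper closes this gap with a separate explicit computation for torus knots (Case 2 of the proof of Theorem \ref{slopeadequate}), using the known degree formulas and exhibiting the cabling annulus (for $p_1,q_1>0$, where $B_1=0$) or the minimal-genus Seifert surface (for $p_1<0<q_1$, where $A_1=0$ and $2B_1=\chi=p_1q_1-p_1+q_1$) as the essential surface. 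You need an analogous ad hoc argument before your induction can run for iterated torus knots.

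A second, smaller slip: in the case where the Jones slope of the cable is $pq$ (so $B=0$), the essential surface realizing that slope should be the cabling annulus, which has $\chi=0=2B$; the fibre (Seifert) surface of the $(p,q)$-torus-knot pattern has boundary slope $0$, not $pq$, and nonzero Euler characteristic in general, so it is not the surface Conjecture \ref{slopeaug} asks for in that case. For non-trivial adequate seeds the rest of your bookkeeping (integrality of the Jones slopes forcing $p/q\notin js_K$, essentiality of the glued surface via Lemma \ref{components}, and the Euler characteristic count $\chi(S)=\abs{q}\chi(S')+\abs{\partial S'}(1-\abs{q})\abs{p-aq}$ with $\abs{\partial S}=\abs{\partial S'}$) matches Corollary \ref{integers} and the proof of Theorem \ref{slopeadequate}.
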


We  also prove Conjecture \ref{slopeaug}  for 
pretzel knots of type $(-2, 3, p)$ and all
Montesinos knots with up to nine crossings (see Section 5).

\subsection{\bf Organization.} The paper is organized as follows. In Section 2 we study boundary slopes of cable knots and we prove Theorem \ref{bdry slope}.  In Sections 3 and 4 we study the behavior  of the degree of the colored Jones polynomial  under knot cabling. In particular,  we discuss cables
of knots of period at most two and we prove Theorem \ref{period2}. In fact, the proof of this theorem allows us to describe explicitly how the Jones slopes of the cable knot $\K$ are related to those of the original knot $K$. We apply our results on knots of period at most two to adequate knots to prove Theorem \ref{main:js}.  
In Section 5 we state, and partially verify, some conjectures about the degree of the colored Jones polynomial. Finally in Section 6 we verify Conjecture \ref{conj} for two-fusion knots.

\section{Boundary Slopes of Cable Knots}\label{boundary-slopes}
 In this section we study how the boundary slopes of knots in $S^3$ affect the boundary slopes 
of their cables. The main result is Theorem \ref{main:bs} that implies in particular  Theorem \ref{bdry slope} stated in the Introduction.
Theorem \ref{main:bs} and Corollary \ref{integers} are key ingredients in the proofs of the results of the paper concerning
relations of the colored Jones polynomial to essential surfaces.  
\smallskip

\subsection{Preliminaries and statement of main result} Let $V$  be a standardly embedded solid torus in $S^3$ and let $V'\subset V$ a second standard solid torus 
that is concentric to $V$. On $\partial V'$  we choose a pair of meridian and canonical longitude  (which also determines such a pair on $\partial V$).  For coprime integers $p,q$, let  $T_{p,q}\subset \partial V'$ be a simple closed curve
of slope $p/q$; that is a $(p, q)$-torus knot.

Recall that for a knot  $K$, $n(K)$  denotes  a neighborhood of $K$.  Embed $V$ in $S^3$ by a  homeomorphism   $f: V\longrightarrow n(K)$
that preserves the canonical longitudes. The $(p,q)$-cable of $K$ is the image
$K_{p,q}:= f(T_{p, q})$.  
The space $C_{p, q}: =f(\overline{V\setminus n(T_{p, q})})$
is called a  $(p,q)$-\emph{cable space}.  The complement of  $K_{p,q}$, denoted by $\M$,
is obtained from the complement of $K$ by attaching $\C$.
The space $\C$ has two boundary components; the inner one $T_{-}=f( \partial n(T_{p, q}))=\partial\M$
and the outer one $T_{+}=f(\partial V)=\partial M_K$.

\begin{define}  \label{bdryslopes} For a knot $K\subset S^3$,  let $\langle \mu, \lambda \rangle$ be the canonical
meridian--longitude basis of $H_1 (\bdy n(K))$.  For a pair of integers $(a, b)$,  the ratio  $a/b \in
{\QQ}\cup \{ 1/0\}$ is called a \emph{boundary slope} of $K$ if there
is a properly embedded essential surface $(S, \bdy S) \subset (M_K,
\bdy n(K))$, such that  $\bdy S$ represents  $a \mu + b \lambda \in
H_1 (\bdy n(K))$. 
\end{define}

In Definition \ref{bdryslopes},  $a, b$  do not need to be coprime.
In fact if $d=\gcd(a, b)$ then we have a surface $S$ as above with $d$ boundary components.
To stress this point sometimes we will say that the \emph{total} slope of $\partial S$ is $a/b$.
Recall that every knot $K$ has finitely many boundary slopes and that
$bs_K$ denotes the set of boundary slopes of $K$.
The rest of the section is devoted to the proof of the following theorem.

\begin{theorem} 
 \label{main:bs} 
 (a)\ Let $K \subset S^3$  be a non-trivial knot and $(p, q)$ coprime  integers.
If $a/b$ is a boundary slope of $K$, then
$q^2 a/b$ is a boundary slope of ${K_{p,q}}$. 
\smallskip

\smallskip

(b)\ For every knot $K \subset S^3$ and $(p, q)$ coprime  integers,
we have 
 $$\left( q^2 bs_K \cup \{pq\} \right) \subset bs_{K_{p,q}}.$$
\end{theorem}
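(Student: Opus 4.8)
The plan is to analyze the cable space $\C = f(\overline{V \setminus n(T_{p,q})})$ directly, since it is a Seifert fibered space over an annulus with one exceptional fiber, and to understand how essential surfaces in $M_K$ extend across $\C$ to give essential surfaces in $\M$. For part (a), I would start with a properly embedded essential surface $(S, \partial S) \subset (M_K, \partial n(K))$ whose boundary has total slope $a/b$ on $T_+ = \partial M_K$. The curve $a\mu + b\lambda$ on $\partial M_K = T_+$ must be matched up, through $\C$, with a curve on the inner torus $T_- = \partial \M$. The key computation is homological: in $H_1(\C)$, the meridian and longitude of $T_+$ (the boundary of the larger solid torus $V$) are expressed in terms of the meridian and longitude of $T_-$ (the boundary of $n(T_{p,q})$), and the relevant fact — standard for cable spaces — is that the longitude $\lambda_+$ on $T_+$ pushes to $q^2 \lambda_- + pq\,\mu_-$ type data, so that a slope $a/b$ on $T_+$ corresponds to slope $q^2 a / b$ on $T_-$. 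Thus I would construct an essential surface in $\C$ — built from annuli and/or a copy of the obvious surface bounded by $\partial S$ together with meridian disks of $V'$ as needed — co-bounding $\partial S$ on $T_+$ and curves of total slope $q^2 a/b$ on $T_-$, then glue it to $S$ and verify essentiality of the union.

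For part (b), the inclusion $q^2 bs_K \subset bs_{\K}$ is exactly part (a) applied to every boundary slope of $K$ (handling the trivial-knot case separately, where it is vacuous or elementary). For the extra slope $pq$, I would produce an essential annulus in $\C$ directly: the cable space is Seifert fibered, and the annulus swept out by the regular fibers (equivalently, the cabling annulus, the annulus in $V$ with one boundary on $\partial V'$ isotopic to $T_{p,q}$ and the other on $\partial V = T_+$) is essential in $\C$ and hence, I claim, essential in $\M$. Its boundary on $T_- = \partial n(T_{p,q})$ is the longitude induced by the cabling, which on $T_+$ — pushed out to $\partial M_K$ and then read back via the identification with $\partial n(K)$ — has slope $pq$. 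This is the classical fact that the cabling annulus realizes the boundary slope $pq$ for the cable knot.

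The main obstacle I expect is not the homological slope bookkeeping (which is a direct, if fiddly, linear-algebra computation in $H_1$ of a torus), but rather verifying \emph{essentiality} — incompressibility and boundary-incompressibility — of the glued-up surface $S \cup (\text{surface in } \C)$ in $\M$. One must rule out compressing disks that straddle the gluing torus $T_-$; the standard tool is an innermost-disk / incompressible-torus argument using that $T_-$ is incompressible in $\M$ (being the boundary of the knot exterior $M_{\K}$, for a non-trivial cable) together with incompressibility of $S$ in $M_K$ and of the pieces in the Seifert fibered $\C$. A parallel argument handles boundary-compressions along $T_+$-turned-interior. One should also take care that the surface produced is not boundary-parallel or a fiber giving a degenerate slope, and that when $\gcd(a,b) = d > 1$ the count of boundary components multiplies correctly, so that ``total slope $q^2 a / b$'' is the honest statement. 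Once essentiality of each glued surface is established, part (b) follows by collecting the slopes $q^2 a/b$ for $a/b \in bs_K$ together with the slope $pq$ from the cabling annulus, and Theorem \ref{bdry slope} is the special case recorded in the introduction.
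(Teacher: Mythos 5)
Your overall architecture is the same as the paper's: a slope correspondence across the cable space $\C$, a connecting surface in $\C$ glued along the companion torus to the given essential surface in $M_K$, an innermost-disk argument for essentiality of the union, and the cabling annulus for the slope $pq$ (with the trivial knot handled separately). The genuine gap is the existence of the connecting surface. You propose to build it ``from annuli and/or a copy of the obvious surface bounded by $\partial S$ together with meridian disks of $V'$,'' but for any slope $a/b\neq p/q$ on $T_{+}$ the surface required is \emph{horizontal} with respect to the Seifert fibration of $\C$: it is an orbifold cover of the annulus base with one cone point of order $\abs{q}$, has $\chi = n(1-\abs{q})<0$, and meets $T_{+}$ in $\abs{q}$ curves of slope $a/b$ and $T_{-}$ in a curve of total slope $q^2a/b$. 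That is not a union of annuli and punctured meridian disks, and no workable construction of it is given. The paper obtains existence abstractly in Lemma \ref{bijection}: Mayer--Vietoris shows the inclusions $H_1(T_{\pm};\QQ)\to H_1(\C;\QQ)$ are isomorphisms, so each primitive class on $T_{-}$ pairs with one on $T_{+}$, an integral combination of the pair dies in $H_1(\C;\ZZ)$, and the long exact sequence of $(\C,\partial\C)$ together with Hempel's Lemma 6.6 produces an embedded $2$-sided essential surface representing the resulting relative class; the explicit formula $\phi^{-1}(a/b)=q^2a/b$ is then read off from the Gordon--Litherland classification of essential surfaces in cable spaces as vertical annuli or horizontal surfaces. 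Without an existence mechanism of this kind, your homological ``bookkeeping'' only identifies what the slope on $T_{-}$ would have to be; it does not produce a surface realizing it. You also need the multiplicity step of Lemma \ref{glue} (take $y$ copies of the connecting surface and $x$ copies of $S$ so the boundary curves on $T_{+}$ agree in number before gluing), which you mention only in passing.

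Two secondary corrections. First, the gluing torus is $T_{+}=\partial M_K=f(\partial V)$, not $T_{-}=\partial\M$, and the incompressibility needed for the innermost-disk argument is that of $T_{+}$, which holds because $K$ is non-trivial (it is a companion torus) --- not because $T_{-}$ bounds the cable knot's exterior. Second, after the innermost-arc reduction you are left with an arc on a component of the split-open surface that is homotopic in its piece of $\M\cut T_{+}$ into $T_{+}$; to isotope it into the boundary of that component and lower the intersection count you need the arc version of essentiality (the paper's Lemma \ref{basic}), applied to the orientable double $\partial(S\times I)$ since the glued surface need not be two-sided. These are fixable, but as written the justification points at the wrong torus.
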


\smallskip


The reader is referred to 
\cite{Ha, hempel} for basic definitions and terminology. 
Let $(p, q)$ be coprime integers.
The cable space $\C$ is a Seifert fibered manifold over an annulus $B$, with one singular fiber of multiplicity $q$. 
In $\C$ there is an essential annulus $A$, that is vertical with respect to the fibration,  with $\bdy A \subset T_{-}$ and with boundary slope equal
to $pq$; this annulus is the \emph{cabling annulus}. 
There are two additional essential annuli in $\C$. One with both boundary components on $T_{-}$, each with slope
$p/q$. The other annulus 
 $A'$ has one component of $\bdy A'$ on $T_{-}$ with slope $pq$, and the second component of $\bdy A'$
on $T_{+}$ with slope $p/q$. See \cite{GL}. In particular, we have the following.
\begin{lemma} \label{cannulus} For any cable knot $\K$, $s=pq$ is a boundary slope in $\M$.

\end{lemma}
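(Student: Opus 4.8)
The plan is to take the desired essential surface to be the cabling annulus $A\subset\C$ itself. Since $\partial A\subset T_-=\partial\M$ and each component of $\partial A$ has slope $pq$ on $T_-$, the annulus $A$ is properly embedded in $\M=M_K\cup_{T_+}\C$ with total boundary slope $pq$; thus it remains only to check that $A$ is essential in $\M$, rather than merely in $\C$ (where essentiality is supplied by the preceding discussion and \cite{GL}).

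To do this I would argue by an innermost--circle analysis along the gluing torus $T_+=\C\cap M_K$. Let $D\subset\M$ be a compressing or $\partial$--compressing disk for $A$, isotoped so that $D\pitchfork T_+$ with $|D\cap T_+|$ minimal. If $D\cap T_+=\emptyset$ then $D\subset\C$, because $\partial D$ (and, in the $\partial$--compressing case, the arc of $\partial D$ on $\partial\M$) lies in $T_-\cup A\subset\C$ while $A\cap M_K=\emptyset$; this contradicts the essentiality of $A$ in $\C$. Otherwise an innermost circle of $D\cap T_+$ bounds a subdisk of $D$ lying entirely in $\C$ or entirely in $M_K$; since $T_+$ is incompressible in $\C$ (it is vertical in the Seifert fibration over an annulus with one cone point, and $\C$ is not a solid torus) and incompressible in $M_K$ (it is the boundary of a nontrivial knot exterior), that subdisk can be pushed across $T_+$ to reduce $|D\cap T_+|$, a contradiction. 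Hence no such $D$ exists, and a parallel remark---using that $M_K$ cannot be swallowed by a collar of $T_-$---rules out $A$ being boundary--parallel in $\M$. Therefore $A$ is essential in $\M$ and $s=pq\in bs_{\K}$.

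The one case needing separate attention is $K$ the trivial knot, when $M_K$ is a solid torus and $T_+$ is compressible in $M_K$, so the argument above does not directly apply; but then $\K$ is the $(p,q)$--torus knot, whose exterior is Seifert fibered with $A$ as a vertical, hence essential, annulus of boundary slope $pq$, which is classical. The only genuinely delicate point in the general argument is this transfer of essentiality across $T_+$; everything else is bookkeeping about the cable space already set up above.
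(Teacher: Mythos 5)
Your proposal is correct and follows the same route as the paper: the paper states this lemma with no separate proof, simply asserting it as an immediate consequence of the preceding description of the cabling annulus $A\subset \C$ (vertical, essential, $\partial A\subset T_-$ with slope $pq$, citing Gordon--Litherland). You supply the one detail the paper leaves implicit---that essentiality of $A$ persists from $\C$ to all of $\M$---via a standard innermost-circle argument across the incompressible torus $T_+$ (in the spirit of the paper's own Lemma \ref{components}), together with the correct separate treatment of the case where $K$ is trivial and $\K$ is a torus knot.
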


Recall that a properly embedded surface $S$ in a 3-manifold  $M$ with boundary, is \emph{essential}
if the map on $\pi_1$ induced by inclusion is injective. If $S$ is orientable
this is equivalent to saying that $S$ is \emph{incompressible} and \emph{ $\partial$-incompressible}
in $M$. If $S$ is non-orientable, then $S$ being essential is  equivalent 
to saying that  the surface $\widetilde{S}: =\partial(S\times I )$ is incompressible and $\partial$-incompressible 
in  $M$.

We need the following lemma, a proof of which is given, for example, in   \cite[Proposition 1.1]{KS}.

\begin{lemma} \label{basic} Let $M$ be a knot complement in $S^3$ and let $\Sigma$ be a properly embedded essential surface 
in $M$. Suppose that a path $\alpha \subset \Sigma$ that has its endpoints on $\partial \Sigma$ is homotopic relative endpoints in $M$
to a path in $\partial M$. Then $\alpha$ is homotopic relative endpoints in $\Sigma$ 
to a path in $\partial \Sigma$.

\end{lemma}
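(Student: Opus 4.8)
The plan is to derive the conclusion from the behaviour of essential surfaces under doubling, which reduces the whole statement to $\pi_1$-injectivity of a closed surface. Regarding $\alpha$ as an embedded arc, I would first dispose of the trivial case: if $\Sigma$ is a disk there is nothing to prove, since two paths in a disk with the same endpoints are automatically homotopic rel endpoints, so one of them may be taken in $\partial\Sigma$; hence assume $\Sigma$ is not a disk. Next I would reduce to $\Sigma$ two-sided. If $\Sigma$ is non-orientable, replace it by $\widetilde\Sigma=\partial N(\Sigma)$, which is two-sided and essential by definition; the arc $\alpha\subset\Sigma$ pushes off to an arc $\widetilde\alpha\subset\widetilde\Sigma$ that is homotopic to $\alpha$ in $M$ (up to short arcs in $\partial M$ joining the endpoints), so the hypothesis passes to $\widetilde\alpha$, and a homotopy rel endpoints in $\widetilde\Sigma$ taking $\widetilde\alpha$ into $\partial\widetilde\Sigma$ projects down to the desired homotopy for $\alpha$. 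From now on $\Sigma$ is two-sided, incompressible and $\partial$-incompressible.

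Now I would pass to the double $DM=M\cup_{\partial M}M$, with doubling involution $\iota$, and the closed surface $D\Sigma=\Sigma\cup_{\partial\Sigma}\Sigma\subset DM$. The key input is the standard fact that an incompressible and $\partial$-incompressible surface doubles to an incompressible one: thus $D\Sigma$ is incompressible in $DM$, and being two-sided it is $\pi_1$-injective. On the other hand, the hypothesis gives $\alpha\simeq\beta$ rel endpoints in $M$ with $\beta\subset\partial M$; since $\iota$ fixes $\partial M$ pointwise, applying $\iota$ gives $\iota(\alpha)\simeq\beta$ rel endpoints in the mirror copy, so the embedded loop $\widehat\alpha:=\alpha\cup\iota(\alpha)\subset D\Sigma$ is homotopic in $DM$ to $\beta\cdot\beta^{-1}$, hence null-homotopic. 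By $\pi_1$-injectivity, $\widehat\alpha$ is null-homotopic in $D\Sigma$.

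The last step is a symmetrization argument inside the closed surface $D\Sigma$. As $\Sigma$ is not a disk, $D\Sigma\neq S^2$ (and $D\Sigma\neq\mathbb{RP}^2$, its Euler characteristic being even), so $D\Sigma$ is aspherical; an embedded null-homotopic loop in such a surface is separating and bounds a unique complementary disk $E_0$. Uniqueness, together with the $\iota$-invariance of $\widehat\alpha=\partial E_0$, forces $\iota(E_0)=E_0$. The involution $\iota|_{E_0}$ of the disk has fixed set $E_0\cap\partial M=E_0\cap\partial\Sigma$, which contains the two points of $\partial\widehat\alpha\subset\partial\Sigma$; by the classification of involutions of $D^2$ it must be a reflection, whose fixed arc $\tau\subset\partial\Sigma$ joins those two points and splits $E_0$ into two half-disks interchanged by $\iota$. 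The half-disk contained in $\Sigma$ has boundary $\alpha\cup\tau$, which exhibits $\alpha$ as homotopic rel endpoints in $\Sigma$ to the path $\tau\subset\partial\Sigma$, as desired.

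The step I expect to be the real obstacle---or at least the only one needing genuine $3$-manifold input rather than surface topology---is the claim that $D\Sigma$ is incompressible in $DM$. Its proof is the usual innermost-circle/outermost-arc cleanup of a hypothetical compressing disk for $D\Sigma$ made transverse to $\partial M$, and this is precisely where irreducibility of the knot complement $M$ and incompressibility of $\partial M$ are used (both hold for non-trivial $K$; the only knot whose exterior has compressible boundary is the unknot, whose essential surfaces are meridian disks and thus already covered by the trivial-disk case). An alternative route, avoiding the double, would be to promote the singular boundary compression provided by the hypothesis directly to an embedded one via the relative Loop Theorem and Dehn's Lemma, and then quote $\partial$-incompressibility of $\Sigma$; this front-loads the same cleanup but replaces the surface symmetrization with an appeal to the boundary version of Dehn's Lemma.
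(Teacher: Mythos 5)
The paper does not prove Lemma \ref{basic} at all: it defers to \cite[Proposition 1.1]{KS}, so there is no internal argument to measure yours against. Taken on its own terms, your route --- doubling $M$ and $\Sigma$, quoting incompressibility of $D\Sigma$ in $DM$, and then symmetrizing the resulting null-homotopy inside the closed surface $D\Sigma$ --- is a standard and essentially correct way to prove the statement for a \emph{properly embedded arc} $\alpha$. The side cases are handled soundly (the disk case, the one-sided case via $\partial N(\Sigma)$, the unknot exterior, $D\Sigma$ not a sphere or projective plane, the classification of involutions of $D^2$), and you correctly isolate the one piece of genuine $3$-manifold input: that an incompressible, $\partial$-incompressible surface in an irreducible manifold with incompressible boundary doubles to an incompressible surface. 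It is worth noting that $\partial$-incompressibility is not cosmetic here: a boundary-parallel annulus is $\pi_1$-injective, yet its spanning arc is homotopic rel endpoints into $\partial M$ without being homotopic in the annulus into its boundary, so the lemma forces one to read ``essential'' as incompressible \emph{and} $\partial$-incompressible, exactly as you do.

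The one genuine gap is the opening move ``regarding $\alpha$ as an embedded arc.'' The lemma is stated for an arbitrary path $\alpha\subset\Sigma$ with endpoints on $\partial\Sigma$, and such a path need not be homotopic rel endpoints to an embedded arc; your symmetrization step depends essentially on embeddedness, since only for an embedded loop $\widehat\alpha$ do you get a unique complementary disk $E_0$ and an involution of $D^2$ to classify. For a singular path the doubling argument still yields $[\alpha\cdot\overline{\iota(\alpha)}]=1$ in $\pi_1(D\Sigma)$, but extracting the conclusion then needs a separate two-dimensional argument (covering spaces, or the graph-of-groups decomposition of $\pi_1(D\Sigma)$ along the circles of $\partial\Sigma$), or else one runs your ``alternative route'' through the relative Loop Theorem, which is built to promote singular data to embedded data. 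In the paper's only use of the lemma --- the proof of Lemma \ref{components}, where the arc $\beta$ is part of the boundary of an outermost disk and hence embedded --- your argument suffices; but as a proof of the statement as literally written it is incomplete at this point.
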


The complement $\M$ of the cable knot  $\K$ is obtained by gluing $\C$ and the complement of $K$
along the torus $T_+$.  If $K$ is a non-trivial knot,  then the  torus $T_{+}$ is essential in $\M$; that is a companion
of $\K$.

We will use $\M \cut T_{+}$ to denote the 3-manifold obtained by splitting
$\M$ along $T_{+}$. Also given a  properly embedded surface $S$ in $\M$ we will use $S\cut T_{+}$
to denote the image of $S$ in $\M \cut T_{+}$.

\begin{lemma}\label{components} Let $K \subset S^3$  be a non-trivial knot and $(p, q)$ coprime  integers.
Let $S$ be a properly embedded surface in $\M$. Suppose that each component of  $S \cut T_{+}$ is essential in the component of $\M \cut T_{+}$ it lies in. Then $S$ is essential in $\M$.

\end{lemma}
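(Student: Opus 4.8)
The plan is to prove the contrapositive at the level of compressions: if $S$ fails to be essential in $\M$, then some compressing or $\partial$-compressing disk can be maneuvered so as to produce a compression of a component of $S\cut T_+$ in its piece of $\M\cut T_+$. First I would reduce to the orientable case in the usual way: if $S$ is non-orientable, replace it by $\widetilde S=\partial(S\times I)$ and note that each component of $\widetilde S \cut T_+$ is the corresponding double of a component of $S\cut T_+$, hence essential in its piece; so it suffices to treat orientable $S$, for which ``essential'' means incompressible and $\partial$-incompressible. Since $T_+$ is incompressible in $\M$ (here is where we use that $K$ is non-trivial, so $T_+$ is a companion torus), a standard innermost-disk/outermost-arc argument lets me isotope $S$ so that $S\cap T_+$ consists of curves and arcs that are essential on both $T_+$ and $S$; in particular no component of $S\cap T_+$ bounds a disk on $T_+$ or is boundary-parallel off to $\partial M_K$ on $T_+$, so after this isotopy each component of $S\cut T_+$ is still a genuine (sub)surface with the same intersection pattern, and by hypothesis each such piece is essential in its component of $\M\cut T_+$.

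Next I would assume, for contradiction, that $S$ is compressible or $\partial$-compressible in $\M$, and take a compressing disk $D$ (or $\partial$-compressing disk) with $\partial D\subset S$ (resp. $\partial D$ a union of an arc in $S$ and an arc in $\partial\M$) and with $D$ meeting $T_+$ transversely in as few components as possible. If $D\cap T_+=\emptyset$, then $D$ lies entirely in one component of $\M\cut T_+$ and directly compresses (or $\partial$-compresses) the piece of $S\cut T_+$ there, contradicting that that piece is essential. If $D\cap T_+\neq\emptyset$, look at an innermost circle of intersection on $D$: it bounds a subdisk $D'\subset D$ with interior disjoint from $T_+$, so $D'$ lies in one piece of $\M\cut T_+$ and $\partial D'\subset T_+$. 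Since $T_+$ is incompressible in $\M$, $\partial D'$ bounds a disk $D''$ in $T_+$; cutting and pasting $D$ along $D''$ and pushing off removes that circle (and possibly more) from $D\cap T_+$ without destroying the property of being a compressing/$\partial$-compressing disk for $S$, contradicting minimality — unless there are no innermost circles, i.e. $D\cap T_+$ consists only of arcs (this can only happen in the $\partial$-compression case, where $D$ has boundary on $\partial\M$).

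The arc case is where Lemma \ref{basic} does the work, and it is the step I expect to be the main obstacle. Take an arc of $D\cap T_+$ that is outermost on $D$, cutting off a subdisk $E\subset D$ whose boundary is the union of this arc $\beta\subset T_+$ and an arc $\gamma$; here $\gamma$ lies in $S$ if $\beta$ came from an interior arc of $D\cap T_+$, or $\gamma$ is a union of a sub-arc of $\partial D\cap S$ and a sub-arc of $\partial D\cap \partial\M$ otherwise. In either case $E$ is a disk in a single piece $W$ of $\M\cut T_+$ exhibiting a homotopy, rel endpoints, of the arc $\gamma\subset S\cap W$ into the copy of $T_+$ in $\partial W$. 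Now $W$ is either the cable space $\C$ or the exterior $M_K$; in the first case $T_+$ is a boundary torus of the Seifert piece $\C$ and in the second it is $\partial M_K$, and in both situations the relevant component of $S\cut T_+$ is an essential surface in a knot-complement-type manifold with $T_+$ as (part of) its boundary, so Lemma \ref{basic} (applied in $W$, whose double is a knot complement, or directly) says $\gamma$ is already homotopic rel endpoints \emph{within} that component of $S\cut T_+$ into its boundary. That homotopy lets me isotope $E$ across, pushing $\beta$ off $T_+$ and reducing $\abs{D\cap T_+}$, again contradicting minimality. Hence no such $D$ exists and $S$ is essential. I would remark that the only genuinely delicate point is verifying that Lemma \ref{basic} applies to each of the two pieces $\C$ and $M_K$ — for $M_K$ it is immediate, and for $\C$ one uses that $\C$ embeds in the knot complement $\M$ with $S\cut T_+$ essential there, or equivalently that $\C$ has incompressible, $\partial$-incompressible frontier, so the lemma's hypotheses carry over.
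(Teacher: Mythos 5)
Your argument is essentially the paper's proof: pass to the orientable double $\partial(S\times I)$, take a compressing/$\partial$-compressing disk meeting the companion torus $T_{+}$ minimally, eliminate circle components of the intersection using incompressibility of $T_{+}$, and use an outermost arc of $D\cap T_{+}$ together with Lemma \ref{basic} to contradict minimality. One minor misstatement: arcs of $D\cap T_{+}$ certainly do occur for an ordinary compressing disk (since $\partial D\subset S$ may cross the circles of $S\cap T_{+}$) — this is in fact the main case in the paper, which avoids $\partial$-compressions altogether by noting that for a manifold with torus boundary incompressibility implies $\partial$-incompressibility — but your outermost-arc step already handles both situations, so the argument is unaffected.
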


\begin{proof} Since $S$ may be non-orientable we will work with the orientable double
$\widetilde {S}= \partial (S \times I)$.  By way of contradiction, suppose that  
$\widetilde {S}$  is not $\pi_1$-injective. This means that $\widetilde {S}$  is either compressible or
$\partial$-compressible. Since $\partial \M$ consists of tori, incompressibility implies $\partial$-incompressibility
\cite{Ha}. Thus we may assume that there is a compression disk $(E, \partial E)\subset (\M, \widetilde {S})$. Since each component of $\widetilde {S}\cut T_{+}$ is incompressible, the intersection $E\cap T_{+}$
must be non-empty. Since $T_{+}$ is essential (and thus incompressible in $\M$) we may eliminate the closed components of $E\cap T_{+}$. Thus we may assume that each component of $E\cap T_{+}$
is an arc properly embedded in $E$. By further isotopy of $E$, during which $\partial E$  moves on
$\widetilde {S}$, we may assume that the intersection $E\cap T_{+}$ is minimal. Now let $\alpha$ be a component
of $E\cap T_{+}$ that is outermost on $E$: It cuts off a disc $E'\subset E$ whose interior contains no further intersections 
with $T_{+}$. Now $\partial E'$ consists of $\alpha$ and an arc $\beta$ that is properly embedded on a component,
say $\Sigma$,
of $\widetilde {S}\cut T_{+}$.  We can use  disk $E'$ to isotope  $\beta$, relatively $\partial \beta$,
on $T_{+}$; this isotopy takes place in  $\M \cut T_{+}$.
Since $\Sigma$ is essential in the component of $\M \cut T_{+}$ 
it lies in, we conclude that the arc $\beta$ may be isotopied on $\Sigma$, relatively $\partial \beta$,
to an arc on $\partial \Sigma$.  This follows from Lemma \ref{basic}. But this isotopy will reduce the components of the intersection  $E\cap T_{+}$, contradicting our assumption of minimality. Thus  $\widetilde {S}$
must be incompressible and therefore, by above discussion,  essential  in $\M$.
\end{proof}


\subsection{Boundary slopes and homology of cable spaces} A \emph{slope}  $s$ on  a torus $T$ is the isotopy class of a simple closed curve on $T$.
Let $\mathcal {S}(T)$ denote the set of {slopes}  of $T$.
The elements in  $\mathcal {S}(T)$  are represented by elements of
${\QQ}\cup \{ 1/0\}$.  With this in mind we will often refer to a slope by its corresponding numerical value.
A homology class in
$H_1(T,  \ZZ)$ is called \emph{primitive} if it is not a non-trivial integer multiple of another element in $H_1(T,  \ZZ)$.
There is a 2-1 correspondence between primitive  classes in $H_1(T,  \ZZ)$
and elements  in $\mathcal {S}(T)$, where $\alpha, \beta \in H_1(T,  \ZZ)$ give the same slope if and only iff $\alpha=
\pm \beta$.

Next we need the following lemma.

\begin{lemma} \label{glue} Consider  a cable knot complement $\M=\C\cup M_K$  as above.
Let $s\in \mathcal {S}(T_{-})$ be a slope on  $T_{-}$ of $\C$, corresponding to $a/b\in {\QQ}\cup \{ 1/0\}$.
Suppose that in $\C$ we have a properly embedded, connected, essential surface $F$ such that:
\begin{enumerate}
\item The boundary $\partial F$ intersects both of $T_{-}$ and $T_{+}$.

\item  Each component of $\partial F$ on $T_{-}$ has  slope $s$.

\item The total slope of $\partial F\cap T_{+}$ is a boundary slope
in $M_K$.
\end{enumerate}

Then $a/b$ is a boundary slope in $\M$.

\end{lemma}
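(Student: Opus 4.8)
The plan is to build an essential surface in $\M$ by gluing $F\subset\C$ to an essential surface $S_K\subset M_K$ along $T_+$, and then to apply Lemma \ref{components} to conclude essentiality. First I would use hypothesis (3): since the total slope of $\partial F\cap T_+$ is a boundary slope in $M_K$, there is a properly embedded essential surface $(S_K,\partial S_K)\subset (M_K,\partial n(K))$ whose boundary realizes that homology class, hence (after an isotopy of $S_K$ on $\partial M_K=T_+$) has boundary curves that agree, slope-wise and in total multiplicity, with the curves $\partial F\cap T_+$. The key homological point is that the number and orientations of boundary components can be matched: $\partial F\cap T_+$ consists of some number $k$ of parallel copies of a curve on $T_+$, and a boundary slope surface with the same total slope can be taken (by taking parallel copies / tubing, or directly from the definition of total slope in Definition \ref{bdryslopes}) to have exactly $k$ boundary components of that slope. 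Then I would glue $F$ and $S_K$ along $T_+$ to form a closed-up surface $S:=F\cup_{T_+} S_K$ properly embedded in $\M=\C\cup M_K$; its boundary lies on $T_-=\partial\M$ and, by hypothesis (2), every component of $\partial S$ has slope $s$ corresponding to $a/b$. Since by hypothesis (1) $\partial F$ meets $T_+$, the gluing is along a nonempty collection of curves and $S$ is connected-enough (or at least has a component meeting $T_-$).

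Next I would verify the hypotheses of Lemma \ref{components} for this $S$. Splitting $\M$ along $T_+$ gives the two pieces $\C$ and $M_K$, and $S\cut T_+$ is exactly $F$ (in $\C$) together with $S_K$ (in $M_K$). By hypothesis $F$ is essential in $\C$, and $S_K$ is essential in $M_K$ by construction, so every component of $S\cut T_+$ is essential in the piece it lies in. Lemma \ref{components} then yields that $S$ is essential in $\M$. Discarding any closed components of $S$ (which cannot happen here since each component of $S\cut T_+$ meets $T_+$ and hence $S$ has boundary on $T_-$, but in any case a closed essential surface would not affect the conclusion), we are left with an essential surface in $\M$ with $\partial S\subset T_-$ and all boundary components of slope $a/b$. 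This is precisely the statement that $a/b$ is a boundary slope in $\M$.

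The main obstacle I anticipate is the matching step at $T_+$: one must ensure that the curves $\partial F\cap T_+$ and the curves $\partial S_K$ can be isotoped to coincide as oriented multicurves on the torus $T_+$, not merely to have the same total slope. This requires knowing that (i) all components of $\partial F\cap T_+$ carry the same slope on $T_+$ — which should follow because $F$ is connected and any two boundary components of a connected surface on a torus are either parallel or the surface is non-separating in a way that still forces parallelism up to sign, together with the Seifert-fibered structure of $\C$ constraining $\partial F\cap T_+$ to vertical or horizontal curves — and (ii) that the boundary slope surface $S_K$ in $M_K$ can be chosen with the matching number of components, which is built into the ``total slope'' formalism of Definition \ref{bdryslopes} (take $d=\gcd$ of the relevant pair, or take parallel copies). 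A secondary point to check is that the gluing does not accidentally create compressions; but this is exactly what Lemma \ref{components} rules out, so no extra argument is needed there. Once the boundary matching is pinned down, the rest is a direct application of the lemmas already established.
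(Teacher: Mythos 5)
Your overall strategy is the same as the paper's: realize the total slope of $\partial F\cap T_{+}$ by an essential surface in $M_K$, glue it to $F$ along $T_{+}$, and invoke Lemma \ref{components} (via Lemma \ref{basic}) to conclude the glued surface is essential, so that its boundary on $T_{-}$ exhibits $a/b$ as a boundary slope of $\M$. You also correctly identify the crux: matching the two boundary multicurves on $T_{+}$.

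Where your argument has a gap is precisely at that crux. You assert that the essential surface $S_K\subset M_K$ realizing the total slope of $\partial F\cap T_{+}$ ``can be taken (by taking parallel copies / tubing, or directly from the definition of total slope) to have exactly $k$ boundary components,'' where $k=\abs{\partial F\cap T_{+}}$. This is not justified: hypothesis (3) only hands you \emph{some} essential surface $E$ whose boundary has the right total slope, and its number $y$ of boundary components is not under your control. Taking parallel copies of $E$ only produces multiples of $y$, so you cannot in general arrange exactly $k$ components by modifying $E$ alone (and tubing boundary components together is not an essentiality-preserving operation). The paper's fix is symmetric: with $x=\abs{\partial F\cap T_{+}}$ and $y=\abs{\partial E\cap T_{+}}$, take $F'$ to be $y$ parallel copies of $F$ in $\C$ and $E'$ to be $x$ parallel copies of $E$ in $M_K$; both now have $xy$ boundary curves of the same slope on $T_{+}$, which can be made to coincide by an isotopy of the torus, and $S=E'\cup F'$ satisfies the hypotheses of Lemma \ref{components} componentwise. (The paper also passes to orientable doubles of $E$ and $F$ first, a technical point you omit.) With that one modification — duplicating $F$ as well as $S_K$ before gluing — your argument goes through; the remaining points you raise, such as all components of $\partial F\cap T_{+}$ being parallel on the torus, are indeed automatic.
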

\begin{proof}  
Suppose that the total slope of $\partial F\cap T_{+}$  corresponds to 
$c/d\in {\QQ}\cup \{ 1/0\}$.
Let $E$ be a essential surface in $M_K$ with  $\partial E$ having  total slope $c/d$.
By passing to the doubles if necessary, we may assume that $E$ and $F$ are orientable.
Let $x$ and $y$  denote the number of components of $\partial F$  and $\partial E$ on $T_{+}$, respectively.
In $\C$ consider a surface $F'$ that is  $y$ copies of  $F$ and in $M_K$ consider a surface $E'$ that is $x$ copies of
$E$. Each of $\partial E'$ and $\partial F'$ has $xy$ components on $T_{+}$.  After isotopy on $T_{+}$ we may assume that $\partial E'=\partial F'$. Now $S=E'\cup F'$ is a properly embedded surface in $\M$.
By assumption, each  component of $S \cut T_{+}$ is essential in the component of $\M \cut T_{+}$ it lies in. Thus  
by Lemma \ref{components}, $S$ is essential in $\M$.
\end{proof}

The following lemma  should be compared with
 \cite[Lemma 2.3]{KS}.

\begin{lemma} \label{bijection} Let $\C$ be a cable space with $\partial C= T_{-}\cup T_{+}$ as above. 
Let $\mathcal {S}_{-}$ and $\mathcal{S}_{+}$ denote the set of slopes on $T_{-}$ and $T_{+}$ respectively.
There is  a bijection
$$\phi: \mathcal {S}_{-}\longrightarrow \mathcal {S}_{+},$$
such that for every $s\in \mathcal {S}_{-}$
there is a connected, essential  properly embedded surface $F\subset \C$, intersecting both components of $\partial \C$,
and such that each component $\partial F\cap T_{-}$ has slope $s$ while each component of $\partial F\cap T_{+}$ has slope $\phi(s)$.

\end{lemma}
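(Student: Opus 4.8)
The plan is to extract both the bijection $\phi$ and the surfaces from the Seifert fibered structure of $\C$ over the annulus $B$ (with its single exceptional fiber of multiplicity $q$), together with an elementary homology calculation. Recall from the discussion above that a regular fiber has slope $pq$ on $T_-$ and slope $p/q$ on $T_+$.

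\textbf{Defining $\phi$.} I would first record that $H_1(\C;\ZZ)\cong\ZZ^2$ and that the inclusion-induced maps $i_{\pm}\colon H_1(T_{\pm};\ZZ)\to H_1(\C;\ZZ)$ are injective, each with image of index $q$; this is a short computation from the Seifert presentation of $\pi_1(\C)$ (or from Mayer--Vietoris applied to $V=n(T_{p,q})\cup_{T_-}\C$). In particular each $i_{\pm}$ becomes an isomorphism after $\otimes\QQ$, so for a slope $s$ on $T_-$, with primitive class $[s]\in H_1(T_-;\ZZ)$, the rational line $\QQ\cdot i_-[s]\subset H_1(\C;\QQ)$ has a unique $i_+$-preimage line in $H_1(T_+;\QQ)$. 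I define $\phi(s)$ to be the slope determined by a primitive integral generator of that line; this is well defined since a slope remembers a homology class only up to sign. Carrying out the same recipe with the roles of $T_-$ and $T_+$ exchanged gives a map in the other direction which is a two-sided inverse of $\phi$, so $\phi$ is a bijection. (One checks directly that $\phi$ carries the regular-fiber slope $pq$ on $T_-$ to the regular-fiber slope $p/q$ on $T_+$, consistently with the vertical annulus $A'$.)

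\textbf{Producing the surface.} Fix $s$ and set $t=\phi(s)$. By construction $i_-[s]$ and $i_+[t]$ are nonzero and lie on a common line in $H_1(\C;\QQ)$, so, after reversing the orientation of the slope-$t$ curve if necessary, there are integers $m,n\ge 1$ with $i_-(m[s])=i_+(n[t])$ in $H_1(\C;\ZZ)$. Hence the oriented multicurve $\gamma\subset\bdy\C$ consisting of $m$ coherently oriented parallel copies of slope $s$ on $T_-$ and $n$ coherently oriented parallel copies of (the orientation-reversed) slope $t$ on $T_+$ is null-homologous in $\C$ and therefore bounds a properly embedded orientable surface. I would replace it by an essential surface $G$ with $\bdy G=\gamma$ of minimal complexity --- e.g. a Thurston-norm-minimizing representative of the corresponding class in $H_2(\C,\bdy\C)$ --- and discard closed components. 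Because $\C$ is Seifert fibered over the annulus, its only closed essential surfaces are $\bdy$-parallel tori \cite{Ha}, so discarding them costs nothing and leaves a surface all of whose boundary lies on $\gamma$, i.e. all of whose $T_-$-circles have slope $s$ and all of whose $T_+$-circles have slope $t$. Finally, injectivity of $i_{\pm}$ rules out a component of $G$ having its (coherently oriented) boundary on only one of the two tori --- the class of that boundary in $H_1(\C;\ZZ)$ would be a nonzero image of a nonzero multiple of $[s]$ or $[t]$, whereas the boundary of a properly embedded surface is null-homologous. Thus every component of $G$ meets both tori, and any one of them is the desired connected essential $F$ with $\bdy F\cap T_-$ of slope $s$ and $\bdy F\cap T_+$ of slope $\phi(s)$.

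\textbf{Main obstacle.} The two places where care is needed are the homology computation in the first part (establishing injectivity and the index of $i_{\pm}$ in an explicit basis) and, in the second part, arranging that the minimal-complexity surface $G$ has boundary consisting of coherently oriented curves of exactly the slopes $s$ and $t$, with no trivial or anti-parallel boundary circles --- this is where irreducibility of $\C$, incompressibility of $\bdy\C$, and the structure of essential surfaces in a Seifert fibered space over a surface with boundary are used. An alternative to the second part is to realize $F$ explicitly as a horizontal surface of the fibration (a suitable finite cover of $B$ branched over the exceptional point), which makes the surface concrete at the cost of pushing all the lattice bookkeeping into the identification of its two boundary slopes with $s$ and $\phi(s)$.
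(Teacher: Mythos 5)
Your argument is correct and follows essentially the same route as the paper: you define $\phi$ via the inclusion-induced maps $i_{\pm}$ becoming isomorphisms over $\QQ$, realize the resulting null-homologous multicurve as the boundary of an embedded essential surface (the paper cites \cite[Lemma 6.6]{hempel} where you invoke Thurston-norm minimization, but this is the same mechanism), and use injectivity of $i_{\pm}$ to force a component to meet both tori. The only cosmetic difference is your unneeded side claim that both images have index $q$; all that is actually used, and all the paper establishes, is that $i_{\pm}\otimes\QQ$ are isomorphisms.
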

\begin{proof}To simplify our notation, throughout this proof, we will use $X:=\C$.
Identify $H_1(\partial X ; \QQ)$  with $H_1(T_{-};  \QQ)\oplus H_1(T_{+};  \QQ)$. 

We claim that the maps 
$i_{\pm}  : H_1(T_{\pm};  \QQ) \longrightarrow H_1(X; \QQ)$, induced by the inclusions of $T_{\pm}$ in $X$ are isomorphisms.
To see that $i_{-}$ is an isomorphism consider the solid torus $X\cup n(T_{p,q})$, and apply the
Mayer--Vietoris long exact sequence to this decomposition. To see that $i_{+}$ is an isomorphism decompose $X$ into a fibered solid torus and an $I$-bundle
$T_{+}\times I$ along a vertical annulus and again apply the Mayer--Vietoris long exact sequence.

The fact that $i_{\pm}$ are isomorphisms implies  the following: Given a primitive class $\alpha_{-} \in H_1(T_{-};  \ZZ) \subset H_1(T_{-}; \QQ)$
there is a unique primitive class $\alpha_{+} \in H_1(T_{+};  \ZZ) \subset H_1(T_{+}; \QQ)$
so that
\begin{equation}
\label{eq-r}
\alpha_{+} = r \ i_{+}^{-1}\circ i_{-}(\alpha_{-}), \  {\text {\rm for \ some }} \   r\in \QQ.
\end{equation}

As discussed above, a  slope $s\in \mathcal {S}_{\pm}$  determines  a primitive class $\alpha_{\pm} \in H_1(T_{\pm};  \ZZ)$ 
up to sign. 
Given  $s\in \mathcal {S}_{-}$, determining a primitive element $\alpha_{-} \in H_1(T_{-},  \ZZ)$  up to sign, 
define $\phi(s)$ to be the slope  in $S_{+}$ that describes the class $\alpha_{+} \in  H_1(T_{+},  \ZZ)$ defined in equation \eqref{eq-r}.
This clearly defines a bijection.

By above discussion, there are relatively prime integers $m,n $  such that 
$$m i_{-}(\alpha_{-})+ n i_{+}(\alpha_{+})=0.$$ Thus the element
$m\alpha_{-} + n\alpha_{+}$ is in the kernel of the map
$i_{-}\oplus i_{+}: H_1(\partial X;  \ZZ) \longrightarrow H_1( X; \ZZ) $. Looking at the homology  long exact sequence for the pair $(X, \partial X)$,
we conclude that there is a class $A \in H_1(X, \partial X; \ZZ)$ such that
$\partial (A)=m\alpha_{-} + n\alpha_{+}$ under the boundary map
$\partial: H_1(X, \partial X; \ZZ) \longrightarrow H_1(\partial X; \ZZ)$.
Now 3-manifold theory assures that there is a 2-sided, embedded, essential  surface $S$ that represents $A$ \cite[Lemma 6.6]{hempel}.
That is $[S]=A$.
By construction, each component of $\partial S\cap T_{-}$ has slope $s$ while each component of $\partial S\cap T_{+}$ has slope $\phi(s)$.

Now $S$ may not be connected. However since $S$ represents $A$, we have $\partial(A)= m\alpha_{-} + n\alpha_{+}\neq 0$. There must be a component $F\subset S$ such that $\partial([F])\neq 0$. We claim that $F$ must intersect both components of $\partial X$.
For, suppose that it doesn't intersect one component of $\partial X$; say $F\cap T_{+}=\emptyset$. Then the class
$\partial([F])\neq 0$ would be a non-zero multiple of $\alpha_{-} \in H_1(\partial X;  \ZZ)$. But this is impossible, since
$i_{-}(\alpha_{-})$ has infinite order in $H_1( X;  \ZZ)$.
Thus $F\cap T_{\pm}\neq \emptyset$. To finish the proof of the lemma, note that since $F\subset S$ and $S$ is essential, each component $\partial F\cap T_{-}$ has slope $s$ while each component of $\partial F\cap T_{+}$ has slope $\phi(s)$.
\end{proof}


\subsection{The proof of Theorem \ref{main:bs}.} We are now ready to  prove Theorem \ref{main:bs}. For part (a) let $K$ be a non-trivial knot and let  $(p, q)$ coprime  integers.
 Suppose that  $a/b$ is a boundary slope of $K$.
Let $\K$ be the cable knot
of $K$. As above we will consider the complement $\M$ obtained by gluing a cable space $\C$  to the complement $M_K$.
We must show that there is an essential surface $S$ in $\M$ such that the total slope of $S\cap \partial \M$ is $q^2a/b$.

On the boundary component $T_{-}=\partial M_{\K}$  consider a pair  $(\mu,  \lambda)$ of meridian and canonical longitude
whose homology classes generate $H_1(T_{-}; \ZZ)$. Let  
$r := \gcd(aq^2, b)$ and let
$x= a q^2 /r$ and $y=b/r$.
Consider
a simple closed curve $\gamma$ whose numerical slope is $x/y$. That is in $H_1(T_{-}; \ZZ)$ we have
$$[\gamma]= x\mu+ y\lambda.$$

 By Lemma \ref{bijection}, applied for  $s=x/y$ we have  a slope $\phi(s)$ 
on the other component  $T_{+}$ of $\C$ such that the following is true: There is an essential, connected,  properly embedded surface $F\subset \C$, with  $\partial F$ intersecting both components of $\partial \C$,
and such that each component $\partial F\cap T_{-}$ has slope $s$ while each component of $\partial F\cap T_{+}$ has slope $\phi(s)$.

Since $\C$ is a Seifert fibered space, up to isotopy,  essential surfaces are either vertical or horizontal with respect to the Seifert fibration \cite{Ha}. Since the base space of $\C$ is an annulus, the only vertical surfaces in $\C$ are annuli.
In fact, essential surfaces in cable spaces are classified by Gordon and Litherland in \cite[Lemma 3.1]{GL}. From that lemma and its proof, it follows that if $F$ is a vertical annulus in $\C$, then every component of $\partial F\cap T_{-}$  has slope $pq$, while every component of 
$\partial F\cap T_{+}$ has slope $p/q$.  In particular, we have $a/b=p/q$ and our hypothesis 
implies that for this to happen  $p/q$ must be a boundary slope of $K$. On the other hand $x/y=pq$ which, by Lemma \ref{cannulus},
is always a boundary slope of $\M$. Thus the desired conclusion holds in this case.

Now suppose that the surface $F$ is horizontal with respect to the Seifert fibration of $\C$.
On $T_{-}$ a regular fiber of the fibration can be identified with the knot $H:=\K$.
The only singular fiber of the fibration  (that has multiplicity $\abs{q}$) may be identified with the knot $K$ on $T_{+}$. If $N$ is the number of times that $F$ intersects the regular fibers of $\C$, then since the base of the fibration is an annulus we have
$$0-\chi(F)/N= 1-1/\abs{q}.$$
It follows that  $\chi(F)=n' (1-\abs{q})$, where
 $N=n'q$. 
Since the intersection number of $\mu$ and $H$ is $1$ we conclude that on $\partial T_{-}$,  the meridian curve $\mu$ is a cross section of the fibration and in $H_1(\partial \M)$ we have
$[\gamma]=nq  \mu +mH$, where  $n=n'/r, m\in \ZZ$ and we have $(nq, m)=1$.
On the other hand we must have  $H= pq \mu + \lambda$.
Hence we obtain that each component of   $$[\partial F]=r(nq +mq) \mu +rm\lambda, \ \ \  {\rm thus} \ \ \  s=x/y= (nq +mpq)/m.$$

In $\C$ we have a horizontal planar surface that intersects $T_{+}$ in a single meridian, call it $\mu'$,  and it intersects
$T_{-}$ in $q$ copies of $\mu$. This surface may be taken to be the image of a meridian disk of a neighborhood of $K$ in $\C$.
By choosing appropriate orientations of $\mu, \mu'$, and by the proof of Lemma \ref{bijection},  we may assume that $\phi(\mu)=\mu'$ and that in $H_1(\C ; \ZZ)$ we have $\mu'=q m$.

In $H_1(T_{+}; \ZZ)$,  the fiber $H$ corresponds to the slope $p/q$
and each component of  $T_{+}\cap \partial F$ has the form
$n  \mu' + mH$. Thus,  
as also stated in \cite[Lemma 3.1]{GL}, $T_{+}\cap \partial F$ 
 has total slope $(n+mp)/(mq)$. The number of components
$T_{+}\cap \partial F$ is $t= \gcd(n+mp, mq)$. Since in $\QQ$ we have that 
 $(n+mp)/(mq)=a/b$ we may take $a':=a/w=n+mp/t$ and $b':=b/w=mq/t$, where $w=\gcd(a, b)$.
Thus we may assume that  $\phi(s)=a'/b'$. 
By assumption $a/b$  is a boundary slope in $M_K$. Thus, Lemma \ref{glue} applies
to conclude that $s=q^2a/b$ is a boundary slope in $\M$. This finishes the proof of part (a) of
 Theorem \ref{main:bs}.
 
For a non-trivial knot $K$, part (b) follows at once from part (a).
To finish the proof of part (b) assume that $K$ is the trivial knot and let  $(p, q)$ coprime  integers.
Now $\K$ is the $(p, q)$ torus knot. The only boundary slope of $K$ is  $a/b=0/1$ and the boundary slopes of
$\K$ are $0$ and $pq$. Thus $q^2 bs_K \cup \{pq\} =\{0, pq \}= bs_{K_{p,q}}$. \qed
\medskip

We close the section with the following corollary that will be useful to us in subsequent sections.

\begin{corollary}\label{integers} Let $\M=\C\cup M_K$ be the complement of a cable knot, where $\abs{q}>1$.
Let $F\subset \C$ be a properly embedded essential surface, that is not an annulus, and such that each   component of 
 $\partial F\cap T_{+}$  has integral slope $a$. 
Suppose   that there is a connected essential surface
$S'\subset M_K$ such that each component of $\partial S'$ has slope $a$. Then the followings hold.
\begin{enumerate}
\item 
 $\partial F\cap T_{+}$  has $\abs{q}$ components and  $\partial F\cap T_{-}$ has a single component
 of slope $q^2a\in \ZZ$.

\item There is a connected essential surface $S\subset \M$, such that each component of $\partial S$ has slope
$q^2a$ and 
$$\chi(S)=    \abs{q} \chi(S') + \abs{ \partial S'}(1-\abs{q})\abs{p-aq},$$
\noindent where $\abs{ \partial S'}$ denotes the number of boundary components of $S'$.
Furthermore, we have $\abs{ \partial S}= \abs{ \partial S'}$.

\end{enumerate}

\end{corollary}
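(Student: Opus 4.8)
The plan is to route everything through the structure of essential surfaces in the cable space $\C$ --- up to isotopy each is vertical, hence an annulus since the base of the Seifert fibration is an annulus, or horizontal --- together with the homological bijection $\phi$ of Lemma \ref{bijection} and the surface-gluing used in the proof of Lemma \ref{glue}. I would assume throughout that $K$ is non-trivial, so that $T_+$ is essential in $\M$ and Lemmas \ref{components} and \ref{glue} apply; the case of the unknot, where $\K$ is a torus knot, can be checked from the explicit description of essential surfaces in torus knot complements.

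For part (1) the first step is to see that $F$ is horizontal and may be taken connected. Since $\abs{q}>1$, a regular fiber meets $T_+$ in a curve of non-integral slope $p/q$, so no component of $F$ --- all of whose boundary curves on $T_+$ have integral slope $a$ --- is vertical; a horizontal surface is never an annulus (a horizontal annulus would meet a regular fiber $N$ times with $0=N(1-1/\abs{q})$, forcing $N=0$), so, $F$ not being an annulus, $F$ is horizontal, and I may replace it by a component meeting $T_+$. The second step is a homology computation in $H_1(\C;\QQ)$ identifying the slope of $\partial F\cap T_-$. Writing $(\mu,\lambda)$ for the meridian--longitude on $T_-=\partial\M$, the regular fiber has class $pq\,\mu+\lambda$; writing $(\mu',\lambda')$ for the meridian--longitude on $T_+=\partial M_K$, the regular fiber has class $p\,\mu'+q\,\lambda'$. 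The image in $\C$ of a meridian disk of $n(K)$ is a planar surface meeting $T_+$ in one copy of $\mu'$ and $T_-$ in $q$ copies of $\mu$, so $i_+(\mu')=q\,i_-(\mu)$; and since the regular fibers near $T_-$ and near $T_+$ are isotopic in $\C$, $i_-(pq\,\mu+\lambda)=i_+(p\,\mu'+q\,\lambda')$, whence $i_-(\lambda)=q\,i_+(\lambda')$. Therefore
$$i_+^{-1}\circ i_-\big(q^2a\,\mu+\lambda\big) = q^2a\cdot\tfrac{1}{q}\,\mu' + q\,\lambda' = q\,(a\,\mu'+\lambda'),$$
so by \eqref{eq-r} the slope $\phi(q^2a)$ on $T_+$ equals $a$; since $\partial F$ meets each of $T_\pm$ in parallel curves of a single slope and $[\partial F]=0$ in $H_1(\C;\QQ)$, equation \eqref{eq-r} then forces the slope of $\partial F\cap T_-$ to be $\phi^{-1}(a)=q^2a\in\ZZ$. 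The third step is to read off the boundary-component counts --- a single curve on $T_-$ and $\gcd(aq,q)=\abs{q}$ curves on $T_+$ --- and the value $\chi(F)=\abs{p-aq}(1-\abs{q})$ from the classification of essential surfaces in cable spaces \cite[Lemma 3.1]{GL}, specialized exactly as in the proof of Theorem \ref{main:bs}(a): under the hypothesis $a\in\ZZ$ the parameters there become $m=1$ and $n=aq-p$, and $\abs{p-aq}\neq 0$ since $\gcd(p,q)=1$ and $\abs{q}>1$.

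For part (2) I would glue as in the proof of Lemma \ref{glue}: take $F'\subset\C$ to be $\abs{\partial S'}$ parallel copies of $F$ and $E'\subset M_K$ to be $\abs{q}$ parallel copies of $S'$, so that $\partial F'$ and $\partial E'$ each consist of $\abs{q}\,\abs{\partial S'}$ parallel curves of slope $a$ on $T_+$; after an isotopy on $T_+$ these can be identified, producing a properly embedded $S=F'\cup_{T_+}E'$ in $\M$. Each component of $S\cut T_+$ is a copy of $F$ or of $S'$, hence essential in the piece of $\M\cut T_+$ it lies in, so $S$ is essential in $\M$ by Lemma \ref{components}. Its boundary $\partial S=\partial F'\cap T_-$ consists of $\abs{\partial S'}$ parallel copies of the single curve $\partial F\cap T_-$, so $\abs{\partial S}=\abs{\partial S'}$, and
$$\chi(S) = \abs{\partial S'}\,\chi(F) + \abs{q}\,\chi(S') = \abs{q}\,\chi(S') + \abs{\partial S'}(1-\abs{q})\,\abs{p-aq}.$$
Finally, because the curves of $\partial F'\cap T_+$ and of $\partial E'\cap T_+$ are all mutually parallel, the identification $\partial F'=\partial E'$ can be chosen so that the incidence pattern between the copies of $F$ and the copies of $S'$ is connected --- e.g.\ keep the copies of $F$ in consecutive blocks and interleave the copies of $S'$, so that every copy of $F$ meets every copy of $S'$ --- which makes $S$ connected.

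The hard part will be part (1), and within it the boundary-component and Euler-characteristic bookkeeping: the slope statements come cleanly from the homology of $\C$ and the bijection $\phi$, but the facts that $\partial F\cap T_-$ is a single curve, that $\partial F\cap T_+$ has exactly $\abs{q}$ components, and the exact value of $\chi(F)$ --- which is precisely what the formula in part (2) consumes --- depend on the full Gordon--Litherland classification rather than on soft homological input.
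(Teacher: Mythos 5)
Your proposal is correct and follows essentially the same route as the paper's proof: isotope $F$ to be horizontal, extract the boundary slopes, component counts and $\chi(F)=\abs{p-aq}(1-\abs{q})$ from the Gordon--Litherland parametrization exactly as in the proof of Theorem~\ref{main:bs}, and glue $\abs{q}$ copies of $S'$ to $\abs{\partial S'}$ copies of $F$ along $T_+$ as in Lemma~\ref{glue}, adding Euler characteristics. Your two variations --- deriving the $T_-$-slope $q^2a$ directly from the homological bijection $\phi$ of Lemma~\ref{bijection} rather than from the $(n+mp)/(mq)$ formulas, and spelling out why the glued surface can be taken connected (a point the paper's proof leaves implicit) --- are both sound and only tighten the argument.
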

\begin{proof} Let $F$ be as in the statement above.
By the proof of Theorem \ref{main:bs}, since $F$ is not an annulus, $ F$ may be isotopied to be horizontal with respect to the Seifert fibration of $\C$.
Furthermore, $\partial F\cap T_{+}$
has total  slope  $(n+mp)/(mq)$, while 
$\partial F\cap T_{-}$ has total slope  $ (nq +mpq)/m$,
for some coprime  integers $m,n$.
The number of components of $\partial F\cap T_{+}$ is $t= \gcd(n+mp, mq)$,
and each has slope $b/c$ where $b=(n+mp)/t$ and $c=mq/t$.
Since  $b/c\in \ZZ$ and $\gcd(m,n)=1$, it follows that $m=\pm1$ and $t=\abs{q}$.
Hence 
 $n=m(qa-p)$, where we will have $m=1$ or $m=-1$ according to whether $qa\geq p$
or $qa\leq p$. Furthermore $n>0$, $\chi(F)=n(1-\abs{q})$. 
The rest of the claims  in part (1) follow.

Now we prove part (2): By the proof of Theorem \ref{main:bs}, an essential  surface $S$ realizing the boundary slope $q^2a$ for $\K$ 
is as in the proof of Lemma \ref{glue}. Since   $\partial F\cap T_{+}$  has $\abs{q}$ components, $S$ is constructed by 
gluing  $\abs{q}$ copies of $S'$ with $ \abs{ \partial S'}$ copies of $F$. Hence we have
$$\chi(S)=\chi(M_K\cap S)+\chi(\C \cap S)=\abs{q}  \chi(S')+\abs{\partial S'} \, \abs{aq-p} (1- \abs{q} ).$$
The last equation follows from the fact that
$\chi(\C \cap S)=\abs{\partial S'}  \chi(F)$ and the above discussion on
  $\chi(F)$.
\end{proof}

\section{Cables of Knots with period at most two}
\label{cjp} In this section we study the behavior of  the Jones slopes of  knots under the operation of cabling. The main result is Theorem
\ref{thm-quasi} that relates the Jones slopes of knots of period at most two to the Jones slopes of their cables. We apply this theorem to prove the Strong Slope Conjecture for iterated cables of adequate knots and  iterated torus knots and the Slope Conjecture for cables of all the non-alternating knots up to nine crossings that have period two.

 \subsection{The colored Jones polynomial} To define the colored Jones polynomial, we first recall the definition of the Chebyshev polynomials of the second kind. For $n \ge 0$, the polynomial $S_n(x)$ is defined recursively as follows:
\begin{equation}
\label{chev}
S_{n+2}(x)=xS_{n+1}(x)-S_{n}(x), \quad S_1(x)=x, \quad S_0(x)=1.
\end{equation}

Let $D$ be a diagram of a knot $K$. For an integer $m>0$, let $D^m$ denote the diagram obtained from $D$ by taking $m$ parallels copies of $K$. This is the $m$-cable of $D$ using the blackboard framing; if $m=1$ then $D^1=D$. Let $\la D^m \ra$ denote the Kauffman bracket of $D^m$: this is a Laurent polynomial over the integers in a variable $v^{-1/4}$ normalized so that $\la \text{unknot} \ra = -(v^{1/2}+v^{-1/2})$. Let $c=c(D)=c_+ + c_-$ denote the crossing number and $w=w(D)=c_+ - c_-$ denote the writhe of $D$. 

For $n>0$, we define 
$$J_K(n):=  ( (-1)^{n-1} v^{(n^2-1)/4} )^w (-1)^{n-1}  \la S_{n-1}(D)\ra$$
where $S_{n-1}(D)$ is a linear combination of blackboard cablings of $D$, obtained via equation \eqref{chev}, and the notation $\la S_{n-1}(D) \ra$ means extend the Kauffman bracket linearly. That is, for diagrams $D_1$ and $D_2$ and scalars $a_1$ and $a_2$, $\la a_1 D_1 + a_2 D_2 \ra = a_1 \la D_1 \ra + a_2 \la D_2 \ra$.

For a Laurent polynomial $f(v) \in \BC[v^{\pm 1/4}]$, let $d_+[f]$ and $d_-[f]$ be respectively the maximal and minimal degree of $f$ in $v$.

\begin{define}
A \emph{quasi-polynomial} is a function $$f:\BN \to \BC, \quad f(n)=\sum_{i=0}^d c_i(n)n^i$$
for some $d \in \BN$, where $c_i(n)$ is a periodic function with integral period for $i=1, \cdots, d$. If $c_d(n)$ is not identically zero, then the degree of $f(n)$ is $d$. 

The \emph{period} $\pi$ of a quasi-polynomial $f(n)$ as above is the common period of $c_i(n)$.
\end{define}

Garoufalidis \cite{ga-quasi} showed that for any knot $K\subset S^3$
the degrees $ d_+[J_{K}(n)] $ and $ d_-[J_{K}(n)] $ are quadratic quasi-polynomials.
The common period  of $ d_+[J_{K}(n)] $ and $ d_-[J_{K}(n)]$ is called \emph{the period of $K$}, denoted  by $\pi(K)$. 
\medskip


\subsection{Cables of knots of period at most 2} 

\label{period_2}

In this subsection we will study knots with period at most two. Examples of such knots include all the adequate knots and the torus knots. We show that, under a mild hypothesis satisfied by all the known examples, the property of having period at most two is preserved under cabling (Proposition \ref{quasi}). As a  result, if a knot $K \subset S^3$ satisfies the Slope Conjecture and $\pi(K)\leq 2$, then all but at most two cables of $K$ also satisfy the conjecture.

\begin{proposition}
\label{quasi}
Let $K$ be a knot such that for $n \gg 0$ we have
$$
d_+[J_K(n)]=a(n)n^2+b(n)n+d(n)
$$
is a quadratic quasi-polynomial of period $\le 2$, with $b(n) \le 0$. 

Suppose $\frac{p}{q} \notin \{4a(n)\}$. Then for $n \gg 0$ we have
$$d_+[J_{\K}(n)]=A(n)n^2+B(n)n+D(n)$$
is a quadratic quasi-polynomial of period $\le 2$, with $\{A(n)\} \subset \big( \{q^2a(n)\} \cup \{pq/4\} \big)$ and $B(n) \le 0$.
\end{proposition}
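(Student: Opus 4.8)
The plan is to compute $d_+[J_{\K}(n)]$ directly from a cabling formula for the colored Jones polynomial, reducing the computation to knowing $d_+[J_K(m)]$ for the relevant colors $m$. Recall that the colored Jones polynomial of a satellite with companion $K$ and pattern the $(p,q)$-torus knot can be expressed, via the Chebyshev/cabling expansion, as a finite sum
\[
J_{\K}(n) = \sum_{k} c_{n,k}\, v^{e(n,k)}\, J_K(k),
\]
where $k$ ranges over an arithmetic progression determined by $n$ and $q$ (roughly $1 \le k \le n$ with $k \equiv n \pmod{2}$ or similar, depending on conventions), the exponents $e(n,k)$ are explicit quadratic functions of $n$ and $k$ coming from the framing correction and the twisting of the $(p,q)$-torus knot pattern, and the $c_{n,k}$ are $\pm 1$ (or simple monomials). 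First I would write down this formula precisely, with all framing conventions matched to the normalization fixed in Section 3, so that the top $v$-degree of each summand is $e(n,k) + d_+[J_K(k)]$.

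\textbf{Key steps.} Second, I would substitute the quasi-polynomial $d_+[J_K(k)] = a(k)k^2 + b(k)k + d(k)$ into the degree of the generic summand, obtaining a function $\Phi(n,k) := e(n,k) + a(k)k^2 + b(k)k + d(k)$, and then maximize $\Phi(n,k)$ over the allowed range of $k$ for each fixed large $n$. Because $e(n,k)$ is quadratic in $k$ and the leading coefficient $a(k)$ is periodic, $\Phi(n,k)$ is (for each residue class of $k$ mod the period) a quadratic polynomial in $k$; its maximum over the interval $1 \le k \le n$ is attained either at an interior critical point or at an endpoint. The hypothesis $p/q \notin \{4a(n)\}$ is exactly what guarantees that the leading coefficient of the $k$-quadratic part of $\Phi$ (which will turn out to be proportional to $a(k) - p/(4q)$ up to bookkeeping) is nonzero, so the maximum is not flat and is achieved at a controlled location — either the endpoint $k = n$ (giving the $q^2 a(n) n^2$-type term) or at $k$ proportional to $n$ near an interior vertex (giving the $pq/4 \cdot n^2$-type term). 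The third step is to check there is no cancellation of top-degree terms among the (finitely many, $\pm 1$) summands that realize the maximum; this is the standard adequacy-type argument and uses that at the extreme $k$ only one summand, or summands with coherent signs, contribute. Fourth, reading off the coefficient of $n^2$ in the maximum gives $A(n)$ with $\{A(n)\} \subset \{q^2 a(n)\} \cup \{pq/4\}$; reading off the coefficient of $n$ gives $B(n)$, and since the maximizing $k$ is an affine function of $n$ with integer slope, $B(n)$ is periodic of period dividing $2$ (the new period being controlled by $q$ and the period of $a,b$, hence still $\le 2$ under the period-$\le 2$ hypothesis). Finally, $B(n) \le 0$ follows because $b(k) \le 0$ and the only other contribution to the linear term comes from the linear part of $e(n,k)$ together with derivative terms evaluated at the maximizer, which the explicit torus-knot framing formula shows are non-positive — this is where the sign hypothesis on $b$ propagates.

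\textbf{Main obstacle.} The main difficulty I anticipate is \emph{step three}: controlling the top degree of the full sum rather than of a single summand. A priori the summands realizing $\max_k \Phi(n,k)$ could cancel, and near an interior maximum of a quadratic there are typically two values of $k$ contributing, so one must verify their leading coefficients do not sum to zero; the parity/arithmetic-progression structure of the index set and the precise form of $c_{n,k}$ are what save this, but matching all the conventions so the non-cancellation is transparent is delicate. A secondary nuisance is the bookkeeping of ``$n \gg 0$'': the quasi-polynomial description of $d_+[J_K(k)]$ only holds for $k$ large, so one must separately check that the finitely many small-$k$ summands never dominate — which holds because their degree is $O(n)$ from $e(n,k)$ while the winning summand has degree $\sim n^2$. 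I expect the argument for $d_-$ (the mirror statement, Proposition \ref{quasi-constant}'s companion) to be formally identical after replacing $K$ by $K^*$, so I would set up step one symmetrically to avoid repeating the analysis.
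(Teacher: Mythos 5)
Your proposal follows essentially the same route as the paper: expand $J_{\K}(n)$ via the cabling formula $J_{\K}(n)=v^{pq(n^2-1)/4}\sum_{k} v^{-pk(qk+1)}J_K(2qk+1)$, maximize the resulting piecewise-quadratic degree function over the index set, and use $p/q\notin\{4a(n)\}$ together with $b(n)\le 0$ to force the maximizer to an endpoint $k=\pm(n-1)/2$ (yielding $A(n)=q^2a(n)$) or to $k$ near $0$ (yielding $A(n)=pq/4$ and $B(n)=0$). The only difference in emphasis is your anticipated ``main obstacle'' of cancellation among tied top-degree summands: the paper disposes of this not by a sign-coherence argument but by showing the maximizing $k$ is strictly unique (comparing the maxima of the two concavity branches, e.g.\ $g_1^+((n-1)/2)-g_1^-((1-n)/2)=(-p+4qa_1)(n-1)+2b_1\neq 0$), so exactly one term realizes the top degree and no cancellation can occur.
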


\begin{proof} 

Since $d_+[J_K(n)]$ is a quadratic quasi-polynomial of period $\le 2$, for $n \gg 0$ we can write

$$d_+[J_K(n)]=\begin{cases} 
a_0 n^2+b_0n+d_0 & \mbox{if } n \mbox{ is even},\\
a_1 n^2+b_1n+d_1 & \mbox{if } n \mbox{ is odd}.\end{cases}$$

 Recall that $K_{p,q}$ is the $(p,q)$-cable of a knot $K$, where $p,q$ are coprime integers and $|q|>1$. It is known that $K_{-p,-q}=\text{r}K_{p,q}$, where $\text{r}K_{p,q}$ denotes $\K$ with the opposite orientation, and that the colored Jones polynomial of a \textit{knot} is independent of the orientation of the knot. Hence, without loss of generality, we will assume that $q > 1$.

 For $n>0$, let $\CS_n$ be the set of all $k$ such that 
 $$
 |k| \le (n-1)/2 \quad \text{and} \quad k \in \begin{cases} \BZ &\mbox{if } n \text{ is odd}, \\ 
 \BZ+\frac{1}{2} & \mbox{if } n \text{ is even}. \end{cases}
 $$
By \cite{Ve}, for $n>0$ we have
\begin{equation}
\label{cables}
J_{K_{p,q}}(n)= v^{pq(n^2-1)/4} \sum_{k \in \CS_n} v^{-pk(qk+1)} J_K(2qk+1),
\end{equation} 
where it is understood that $J_K(-m)=-J_K(m)$.

In the above formula, there is a sum. Under the assumption of the proposition, we will show that there is a unique term of the sum whose highest degree is strictly greater than those of the other terms. This implies that the highest degree of the sum is exactly equal to the highest degree of that unique term.

Let $\CS^+_n=\{k \in \CS_n \mid k \ge 0\}$ and $\CS^-_n=\{k \in \CS_n \mid k \le -\frac{1}{2}\}$.  For $k \in \CS_n$ let $$f(k)=d_+[v^{-pk(qk+1)} J_K(2qk+1)].$$
Let $g_i^{\pm}(x)$, for $i \in \{0,1\}$, be the quadratic real polynomials defined by
$$
g_i^{\pm}(x) = ( -pq + 4q^2 a_i ) x^2 + ( -p + 4q a_i \pm 2q b_i ) x +  a_i \pm b_i + d_i.
$$
For $k \in \CS_n$ we have 
$f(k) = -pk(qk+1)+d_+[J_K(|2qk+1|)]$, which gives
\begin{eqnarray*}
 f(k)=
\begin{cases} 
g_0^+(k) &\mbox{if } k \in \CS^+_n \mbox{ and } 2qk+1 \mbox{ is even}, \\
g_0^-(k) &\mbox{if } k \in \CS^-_n \mbox{ and } 2qk+1 \mbox{ is even},\\
g_1^+(k) &\mbox{if } k \in \CS^+_n \mbox{ and } 2qk+1 \mbox{ is odd}, \\
g_1^-(k) &\mbox{if } k \in \CS^-_n \mbox{ and } 2qk+1 \mbox{ is odd}. 
\end{cases}
\end{eqnarray*}

\smallskip

\underline{\textit{Case 1.}}\   Suppose that $q$ is even.
For $k \in \CS_n$, since $2qk+1$ is odd we have $$f(k)= \begin{cases} g_1^+(k) &\mbox{if } k \in \CS^+_n, \\
g_1^-(k) &\mbox{if } k \in \CS^-_n.  \end{cases}$$
\vskip 0.1in

\textit{Subcase 1.1.} Assume  that  $p/q <4a_1$.

Since $-pq + 4q^2 a_1>0$, the quadratic polynomial $g_1^+(x)$ is concave up. Hence, for $n \gg 0$, $g_1^+(k)$ is maximized on $\CS^+_n$ at $k=(n-1)/2$.  Similarly, $g_1^-(k)$ is maximized on $\CS^-_n$ at $k=(1-n)/2$. Note that $$g_1^+ \big( (n-1)/2 \big)-g_1^-  \big( (1-n)/2 \big)=(-p+4qa_1)(n-1)+2b_1>0$$
for $n \gg 0$. Hence $f(k)$ is maximized on the set $\CS_n$ at $k=(n-1)/2$. Since $f \big( (n-1)/2 \big)=g^+_1 \big( (n-1)/2 \big)$, equation \eqref{cables} then implies that  
\begin{eqnarray*}
d_+[J_{K_{p,q}}(n)] &=& pq(n^2-1)/4+g_1^+ \big((n-1)/2 \big)\\
&=& q^2a_1n^2+ \big(qb_1+(q-1)(p-4qa_1)/2 \big)n\\
&& + \, a_1(q-1)^2-(b_1+p/2)(q-1)+d_1
\end{eqnarray*}
for $n \gg 0$. Since we assumed that $q>1$, we have that $B(n) = qb_1+(q-1)(p-4qa_1)/2<0$, and the conclusion follows in this case.
\vskip 0.1in

\textit{Subcase 1.2.} 
Assume that $p/q >4a_1$. 

Since $-pq + 4q^2 a_1<0$, the quadratic polynomial $g_1^+(x)$ is concave down and attains its maximum at 
$$x=x_0:=- \left(\frac{1}{2q}+\frac{b_1}{-p+4qa_1} \right).$$ Since $b_1 \le 0$, we have $x_0 < 0$. This implies that $g_1^+(x)$ is a strictly decreasing function on $[0,\infty)$. Similarly, $g_1^-(x)$ is a strictly increasing function on $(-\infty,-\frac{1}{2}]$.  

First suppose $n$ is even. Then $k \in \BZ+\frac{1}{2}$. In this subcase, $g_1^+(k)$ is maximized on $\CS^+_n$ at $k=\frac{1}{2}$ and $g_1^-(k)$ is maximized on $\CS^-_n$ at $k=-\frac{1}{2}$. Note that $g_1^+(\frac{1}{2})-g_1^-(-\frac{1}{2})=(-p+4qa_1)+2b_1<0$. Hence $f(k)$ is maximized on $\CS_n$ at $k=-1/2$. Since $f(-\frac{1}{2})=g_1^-(-\frac{1}{2})$, equation \eqref{cables} then implies that
$$d_+[J_{K_{p,q}}(n)]=pq(n^2-1)/4+g^-_1(-1/2)$$
for even $n \gg 0$. Similarly, for odd $n \gg 0$ we obtain
$$d_+[J_{K_{p,q}}(n)]=pq(n^2-1)/4+g^+_1(0).$$
Note that $B(n) = 0$ in this case.
\vskip 0.2in

\underline{\textit{Case 2.}} \ Suppose that $q$ is odd. 
As in Case 1 we have the following.
\vskip 0.1in

\textit{Subcase 2.1.}  Suppose that $n$ is even. 

For $k \in \CS_n$, we have $k \in \BZ+\frac{1}{2}$ and $2qk+1$ is even. Hence $f(k)= \begin{cases} g_0^+(k) &\mbox{if } k \in \CS^+_n, \\
g_0^-(k) &\mbox{if } k \in \CS^-_n.  \end{cases}$

If $p/q <4a_0$ then $f(k)$ is maximized on $\CS_n$ at $k=(n-1)/2$. Hence
\begin{eqnarray*}
d_+[J_{K_{p,q}}(n)] &=& pq(n^2-1)/4+g_0^+ \big( (n-1)/2 \big)\\
&=& q^2a_0n^2+ \big(qb_0+(q-1)(p-4qa_0)/2 \big)n\\
&& + \, a_0(q-1)^2-(b_0+p/2)(q-1)+d_0.
\end{eqnarray*}
In this case we have $B(n) = qb_0+(q-1)(p-4qa_0)/2<0$.

If $p/q >4a_0$ then $f(k)$ is maximized on $\CS_n$ at $k=-1/2$. Hence $$d_+[J_{K_{p,q}}(n)]=pq(n^2-1)/4+g^-_0(-1/2).$$
Note that $B(n) = 0$ in this case.

\vskip 0.1in

\textit{Subcase 2.1.}  Suppose that $n$ is odd. 

For $k \in \CS_n$, we have $k \in \BZ$ and $2qk+1$ is odd. Hence $f(k)= \begin{cases} g_1^+(k) &\mbox{if } k \in \CS^+_n, \\
g_1^-(k) &\mbox{if } k \in \CS^-_n.  \end{cases}$

If $p/q <4a_1$ then $f(k)$ is maximized on $\CS_n$ at $k=(n-1)/2$. Hence
\begin{eqnarray*}
d_+[J_{K_{p,q}}(n)] &=& pq(n^2-1)/4+g_1^+ \big( (n-1)/2 \big)\\
&=& q^2a_1n^2+ \big(qb_1+(q-1)(p-4qa_1)/2 \big)n\\
&& + \, a_1(q-1)^2-(b_1+p/2)(q-1)+d_1.
\end{eqnarray*}
In this case we have $B(n) = qb_1+(q-1)(p-4qa_1)/2 < 0$.

If $p/q >4a_1$ then $f(k)$ is maximized on $\CS_n$ at $k=0$. Hence $$d_+[J_{K_{p,q}}(n)]=pq(n^2-1)/4+g^+_1(0).$$
Note that $B(n) = 0$ in this case.

This completes the proof of Proposition \ref{quasi}.
\end{proof}

\begin{remark} (1) Proposition \ref{quasi} generalizes \cite[Lemma 2.2]{RZ}, \cite[Lemma 2.2]{Ru},  \cite[Lemma 3.1]{Tr-AJ-8} and \cite[Lemma 3.2]{Tr}.

(2) When $\pi(K)$ is greater than 2 then determining the highest degree of
 $J_{K_{p,q}}(n)$ in Equation \eqref{cables} becomes harder as there might be more opportunities for cancellation
between terms. This case will be discussed in Section \ref{constant a(n)}.
\end{remark}
 
 Proposition \ref{quasi} and Theorem \ref{main:bs} imply the following.
 
 \begin{theorem}
 \label{thm-quasi}
 Let $K$ be a knot such that for $n \gg 0$ we have
$$
d_+[J_K(n)]=a(n)n^2+b(n)n+d(n)
$$
is a quadratic quasi-polynomial of period $\le 2$, with $b(n) \le 0$. 

Suppose $\frac{p}{q} \notin js_K$. Then for $n \gg 0$ we have
$$d_+[J_{\K}(n)]=A(n)n^2+B(n)n+D(n)$$
is a quadratic quasi-polynomial of period $\le 2$, with $B(n) \le 0$. 
Moreover,  if $js_K \subset bs_K$ we have $js_{\K} \subset bs_{\K}$.

Similarly, let $K$ be a knot such that for $n \gg 0$ we have
$$
d_-[J_K(n)]=a^*(n)n^2+b^*(n)n+d^*(n)
$$
is a quadratic quasi-polynomial of period $\le 2$, with $b^*(n) \ge 0$. 
Suppose $\frac{p}{q} \notin js^*_K$.  
Then for $n \gg 0$ we have
$$d_-[J_{\K}(n)]=A^*(n)n^2+B^*(n)n+D^*(n)$$
is a quadratic quasi-polynomial of period $\le 2$, with $B^*(n) \ge 0$. 
Moreover,  if $js^*_K \subset bs_K$ we have $js^*_{\K} \subset bs_{\K}$.
 \end{theorem}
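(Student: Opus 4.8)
The plan is to read Theorem \ref{thm-quasi} off from Proposition \ref{quasi} together with Theorem \ref{main:bs}(b); no genuinely new estimate is required, only a translation between the language of Jones slopes (cluster points) and that of the coefficient functions of the degree quasi-polynomials.

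For the statement about $d_+$ I would first record the elementary fact that, since $a(n)$ is an eventually periodic function $\BN\to\BQ$, one has $js_K=\{\,4a(n): n\gg 0\,\}$: the contributions of the linear and constant terms of $d_+[J_K(n)]$ to $4n^{-2}d_+[J_K(n)]$ tend to $0$, while along each residue class $a(n)$ is eventually constant. In particular the hypothesis $p/q\notin js_K$ is, for $n\gg 0$, exactly the hypothesis $p/q\notin\{4a(n)\}$ of Proposition \ref{quasi}. Invoking that proposition gives, for $n\gg 0$, that $d_+[J_{\K}(n)]=A(n)n^2+B(n)n+D(n)$ is a quadratic quasi-polynomial of period $\le 2$ with $B(n)\le 0$ and $\{A(n)\}\subset\{q^2a(n)\}\cup\{pq/4\}$. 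Taking cluster points of $4n^{-2}d_+[J_{\K}(n)]$ as above yields $js_{\K}=\{\,4A(n):n\gg 0\,\}\subset q^2 js_K\cup\{pq\}$. If moreover $js_K\subset bs_K$, then Theorem \ref{main:bs}(b) gives $q^2 bs_K\cup\{pq\}\subset bs_{\K}$, so
$$ js_{\K}\ \subset\ q^2 js_K\cup\{pq\}\ \subset\ q^2 bs_K\cup\{pq\}\ \subset\ bs_{\K},$$
which is the desired conclusion.

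The statement about $d_-$ I would deduce from the one about $d_+$ applied to the mirror image $K^*$ of $K$. Here one uses the standard behavior of the (0-framed) colored Jones polynomial under mirroring, which exchanges the roles of highest and lowest degree: this converts the data $d_-[J_K(n)]=a^*(n)n^2+b^*(n)n+d^*(n)$ with $b^*(n)\ge 0$ into data for $d_+[J_{K^*}(n)]$ whose linear coefficient is $\le 0$, converts $js^*_K$ into $js_{K^*}$ (up to sign), converts the hypothesis $js^*_K\subset bs_K$ into $js_{K^*}\subset bs_{K^*}$, and identifies the mirror of $\K$ with a non-trivial cable $(K^*)_{\pm p,q}$ (still non-trivial since $\abs{q}>1$). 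Applying the already-proved $d_+$ case to $K^*$ and this cable, and translating back, gives the claim for $d_-$, $js^*$, and $bs$.

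The routine parts are the two translations (periodicity $\Rightarrow js_K=\{4a(n)\}$, and the inclusion chain via Theorem \ref{main:bs}(b)); the one place that needs genuine care is the mirror dictionary in the last step — getting every sign right so that the hypotheses and conclusions for $K^*$ line up with those stated for $K$, and checking that passage to the mirror cable preserves the standing assumption $\abs{q}>1$. Once that bookkeeping is in place, Proposition \ref{quasi} and Theorem \ref{main:bs}(b) apply to $K^*$ verbatim and the proof is complete.
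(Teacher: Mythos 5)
Your proposal is correct and follows essentially the same route as the paper: the paper's proof likewise derives the first part immediately from Proposition \ref{quasi} together with Theorem \ref{main:bs}, and obtains the second part by applying those results to the mirror image $K^*$ via $J_{K^*}(n)(v)=J_K(n)(v^{-1})$. Your added bookkeeping (the identification $js_K=\{4a(n):n\gg 0\}$ and the sign dictionary under mirroring) just makes explicit what the paper leaves implicit.
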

\begin{proof} The first part of the theorem follows immediately by Proposition  \ref{quasi} and Theorem \ref{main:bs}.
To obtain the second part recall that if ${K^{*}}$ denotes the mirror image of $K$ then
$J_{K^{*}}(n)$ is obtained from $J_K(n)$ by replacing the variable $v$ with $v^{-1}$.
Now the result will follow by applying Proposition \ref{quasi} and Theorem \ref{main:bs} to $K^{*}$.
\end{proof}
 
\smallskip

\subsection{Strong Slope Conjecture for iterated cables of adequate knots} 
Let $D$ be a link diagram, and $x$ a crossing of $D$.  Associated to
$D$ and $x$ are two link diagrams, each with one fewer crossing than
$D$, called the \emph{$A$--resolution} and \emph{$B$--resolution} of
the crossing. See Figure \ref{resolutions}.

A Kauffman state $\sigma$ is a choice of $A$--resolution or
$B$--resolution at each crossing of $D$. Corresponding to every state
$\sigma$ is a crossing--free diagram $s_\sigma$: this is a collection
of circles in the projection plane. We can encode the choices that lead
to the state $\sigma$ in a graph $G_\sigma$, as follows. The vertices
of $G_\sigma$ are in $1-1$ correspondence with the state circles of
$s_\sigma$. Every crossing $x$ of $D$ corresponds to a pair of arcs
that belong to circles of $s_\sigma$; this crossing gives rise to an
edge in $G_\sigma$ whose endpoints are the state circles containing
those arcs.

Every Kauffman state $\sigma$ also gives rise to a surface $S_\sigma$,
as follows. Each state circle of $\sigma$ bounds a disk in $S^3$. This
collection of disks can be disjointly embedded in the ball below the
projection plane. At each crossing of $D$, we connect the pair of
neighboring disks by a half-twisted band to construct a surface
$S_\sigma \subset S^3$ whose boundary is $K$. See Figure
\ref{statesurface} for an example where $\sigma$ is the all--$B$
state.

\begin{figure}
  \includegraphics{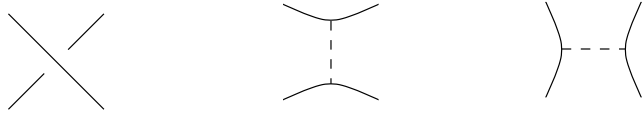}
  
  \caption{From left to right: A crossing, the $A$-resolution and  the
 the $B$-resolution.}
    
  \label{resolutions}
\end{figure}

\begin{figure}
  \includegraphics{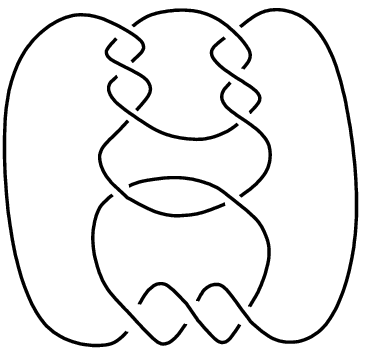} \hspace{.2in}
  \includegraphics{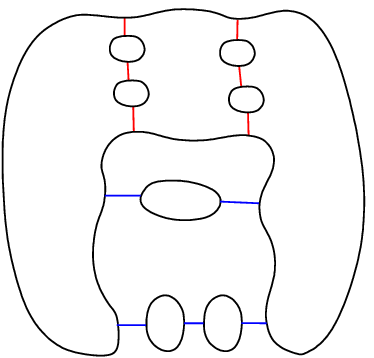} \hspace{.2in}
  \includegraphics{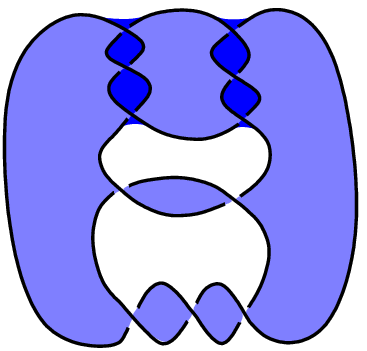}
  \caption{From left to right: An example link diagram, the graph
    $\G_{\sigma}$ corresponding to state surface
    $S_{\sigma}$. \thanks{Figure borrowed from \cite{fkp:qsf}.}}
  \label{statesurface}
\end{figure}

\begin{define}\label{defi:adequate}
A link diagram $D$ is called \emph{$A$--adequate} if the state graph
$\GA$ corresponding to the all--$A$ state contains no 1--edge loops.
Similarly, $D$ is called \emph{$B$--adequate} if the all--$B$ graph $\GB$
contains no 1--edge loops.  A link diagram 
is \emph{adequate} if it is
both $A$-- and $B$--adequate.  A link that admits an adequate diagram
is also called \emph{adequate}.
\end{define}

The number of negative crossings  $c_-$  of an $A$--adequate knot diagram is a knot invariant.
Similarly, the number of positive crossings $c_+$ of  a  $B$-adequate knot diagram is a knot invariant.
Let $v_A$ (resp. $v_B$) be the number of state circles in the all--$A$ (resp. all--$B$) state of the knot diagram $D$.

The following summarizes \cite[Lemma 5.4]{Li}, \cite[Proposition 2.1]{Le06} and \cite[Theorem 3.1]{KL}.

\begin{lemma} 
\label{degLe}
Let $D$ be a diagram of a knot $K$.

(1) We have
$$2 \, d_-[J_{K}(n)] \ge  -c_- n^2 + (c -v_A) n  + v_A - c_+.$$
Equality holds for all $n \ge 1$ if $D$ is $A$--adequate. Moreover, if equality holds for some $n \ge 3$ then $D$ is $A$--adequate.

(2) We have
$$
2 \, d_+[J_{K}(n)] \le c_+ n^2 + (v_B - c) n + c_- - v_B.
$$
Equality holds for all $n \ge 1$ if $D$ is $B$--adequate. Moreover, if equality holds for some $n \ge 3$ then $D$ is $B$--adequate.
\end{lemma}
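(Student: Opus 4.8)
The plan is to establish the inequalities in Lemma~\ref{degLe} by a direct skein-theoretic analysis of the Kauffman bracket of the cabled diagram $S_{n-1}(D)$, and then to identify when the skein expansion saturates the degree bounds with the combinatorial adequacy condition on the state graphs. First I would recall that each Chebyshev polynomial $S_{n-1}$ expands, via \eqref{chev}, as a signed sum of the cablings $D^{n-1}, D^{n-3}, \dots$, so that $\langle S_{n-1}(D)\rangle$ is a linear combination of Kauffman brackets $\langle D^m\rangle$ with $m \le n-1$; the dominant degree contribution will come from the top cable $D^{n-1}$ (this is standard, and appears in the references cited). So it suffices to estimate $d_+$ and $d_-$ of $\langle D^{n-1}\rangle$, keeping track of the writhe-correction factor $((-1)^{n-1}v^{(n^2-1)/4})^w$ built into the definition of $J_K(n)$.

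The core computation is the classical state-sum estimate for $\langle D^m\rangle$. Resolving each of the $m^2 c$ crossings of $D^m$ by the Kauffman bracket rule, the all-$A$ (respectively all-$B$) state of $D^m$ consists of $m$ nested parallel copies of each circle of $s_{\sigma_A}$ (respectively $s_{\sigma_B}$), so it has $m\, v_A$ (resp.\ $m\, v_B$) loops, and one reads off the extreme degrees of that single state; then a short induction on crossing changes — resolving one crossing at a time and comparing loop counts — shows the all-$A$ state gives the minimal degree term and the all-$B$ state the maximal degree term, with the error controlled by the number of crossings. Carrying the $m = n-1$ bookkeeping through and adding the writhe normalization yields exactly
$$2\,d_+[J_K(n)] \le c_+ n^2 + (v_B - c)n + c_- - v_B, \qquad 2\,d_-[J_K(n)] \ge -c_- n^2 + (c - v_A)n + v_A - c_+ .$$
For the equality statement, I would invoke the standard adequacy argument: the top (resp.\ bottom) coefficient of $\langle D^m\rangle$ is, up to sign, the number of loops in the states obtained from the all-$B$ (resp.\ all-$A$) state of $D^m$ by a single resolution-switch that does not drop the loop count, and the absence of $1$-edge loops in $\GB$ (resp.\ $\GA$) guarantees that switching a crossing in the all-$B$ (resp.\ all-$A$) state of $D^m$ strictly decreases the loop count, so there is no cancellation and the bound is sharp for every $n\ge 1$. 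The converse — equality for some $n \ge 3$ forces adequacy — follows because a $1$-edge loop in $\GB$ (resp.\ $\GA$) would, already at the level of $D^2$, produce a lower-order state achieving the same extreme degree with opposite sign, killing the leading coefficient; one needs $n\ge 3$ so that $m = n-1 \ge 2$ and the reduced cable actually sees the doubled crossing.

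The main obstacle, and the step I would be most careful about, is the passage from $D$ to the cable $D^{m}$ and then to the Chebyshev-weighted combination $S_{n-1}(D)$: one must verify that the lower cables $D^{n-3}, D^{n-5}, \dots$ appearing in $S_{n-1}(D)$ contribute strictly smaller maximal (resp.\ strictly larger minimal) $v$-degree than $D^{n-1}$ after the writhe normalization, so that no cancellation can occur at the extreme degrees of $J_K(n)$ itself, and that the adequacy-sharpness of $\langle D^{n-1}\rangle$ is not spoiled by the subdominant terms. Rather than reprove all of this, I would cite \cite{Li}, \cite{Le06} and \cite{KL} for the precise statements — indeed the lemma is stated as a summary of those three references — and limit the new content to assembling them into the single displayed form above.
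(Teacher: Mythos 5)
The paper gives no proof of Lemma \ref{degLe}: it is stated verbatim as a summary of \cite[Lemma 5.4]{Li}, \cite[Proposition 2.1]{Le06} and \cite[Theorem 3.1]{KL}, which is exactly where your proposal also lands, so the approach is the same. One caution about your sketch: the all--$A$ (resp.\ all--$B$) state of $D^m$ is \emph{not} just $m$ nested copies of each circle of $s_{\sigma_A}$ with $m\,v_A$ loops --- each cabled crossing contributes additional small circles --- and it is precisely this extra bookkeeping, interacting with the Chebyshev coefficients of $S_{n-1}(D)$, that makes the degree count delicate in the cited references; since you defer to them for that step, there is no gap relative to what the paper itself provides.
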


The following theorem, which implies that the Slope Conjecture is true for adequate knots, summarizes results proved in   \cite{FKP, FKP-guts}.

\begin{theorem}\label{thm:essential} 
Let $D$ be an $A$--adequate diagram of a knot $K$. Then the state
surface $S_A$ is essential  in the knot
complement $M_K$, and it  has boundary slope $-2c_-$. Furthermore, we have
$$-2c_- = \lim_{n \to \infty} 4\, n^{-2}d_-[J_K(n)].$$

Similarly, if $D$ is a $B$--adequate diagram of a
knot $K$, then $S_B$ is essential in the knot 
complement $M_K$, and it  has boundary slope $-2c_+$.
 Furthermore, we have
$$2c_+ =  \lim_{n \to \infty} 4\, n^{-2}d_+[J_K(n)].$$
\end{theorem}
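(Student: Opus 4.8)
The two limiting formulas are immediate consequences of Lemma \ref{degLe}. If $D$ is $A$--adequate then the inequality in part (1) is an equality for every $n\ge 1$, so
$$
4\,n^{-2}d_-[J_K(n)] = 2\,n^{-2}\bigl(-c_-\,n^2 + (c-v_A)\,n + v_A - c_+\bigr) \xrightarrow[n\to\infty]{} -2c_-,
$$
and the $B$--adequate case follows in exactly the same way from the equality in part (2) of Lemma \ref{degLe}. Thus the substantive content of the theorem is that the state surface $S_A$ (resp.\ $S_B$) is essential in $M_K$ and realizes the predicted boundary slope; these are precisely the results of \cite{FKP, FKP-guts}, and I recall below the shape of their proofs. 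I will treat the $A$--adequate case; the $B$--adequate case is obtained by applying it to the mirror knot $K^\ast$, using that $J_{K^\ast}(n)$ is obtained from $J_K(n)$ by $v\mapsto v^{-1}$, that mirroring exchanges $A$-- and $B$--adequacy and the roles of $c_+$ and $c_-$, and that the boundary slopes of $K^\ast$ are the negatives of those of $K$.

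\emph{Boundary slope.} The surface $S_A$ is a spanning surface for $K$: it consists of one disk for each circle of the all--$A$ state $s_A$, together with one half--twisted band at each crossing of $D$. Consequently $\partial S_A$ is a curve on $\partial n(K)$, and its slope is the difference between the surface framing and the Seifert ($0$--) framing of $K$. Computing this difference by pushing $\partial S_A$ off itself along $S_A$ and evaluating the resulting self--linking number band by band, one finds that the all--$A$ band at a positive crossing contributes $0$ while the all--$A$ band at a negative crossing contributes $-2$; summing over crossings gives boundary slope $-2c_-$. (This is the same framing shift recorded by the writhe correction $\bigl((-1)^{n-1}v^{(n^2-1)/4}\bigr)^w$ in the definition of $J_K(n)$.) Since $c_-$ is a knot invariant of $A$--adequate diagrams, the value $-2c_-$ is well defined, and it agrees with the limit computed above.

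\emph{Essentiality.} This is the heart of \cite{FKP-guts}, and it is here that the $A$--adequacy hypothesis does the work. One passes from the state graph $\GA$ to the reduced graph $\GRA$ obtained by collapsing each multi--edge to a single edge, and uses $s_A$ together with $\GRA$ to build an angled polyhedral decomposition of $S^3 \cut S_A$, whose $2$--cells are organized by the state circles of $s_A$ and the regions of $D$. Since $S_A$ is generally non--orientable, one works with the orientation double $\widetilde{S_A}=\partial(S_A\times I)$, and one shows that $\widetilde{S_A}$ can be isotoped into normal form with respect to the decomposition. A combinatorial Gauss--Bonnet (angle--sum) argument then shows that a lowest--weight compressing or $\partial$--compressing disk for $\widetilde{S_A}$ would be a normal disk of negative combinatorial area, which is impossible; the hypothesis that $\GA$ contains no $1$--edge loops is exactly what forces the relevant angle sums to be non--negative. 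Hence $\widetilde{S_A}$ is incompressible and $\partial$--incompressible, i.e.\ $S_A$ is essential in $M_K$.

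The main obstacle is the essentiality argument: constructing the angled polyhedral decomposition of the state--surface complement and carrying out the normal--surface analysis that converts the combinatorial $A$--adequacy condition into $\pi_1$--injectivity. By contrast, once Lemma \ref{degLe} and the band description of $S_A$ are available, the boundary--slope and limit statements are routine.
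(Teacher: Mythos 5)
Your proposal is correct and follows the same route as the paper: the paper offers no independent proof of this theorem, stating only that it summarizes results of \cite{FKP, FKP-guts}, and your derivation of the two limits from the equality cases of Lemma \ref{degLe}, together with the band-by-band slope computation and the deferral of essentiality to the polyhedral/normal-surface argument of \cite{FKP-guts}, is exactly what is intended. One small point: your mirror-image argument correctly yields boundary slope $2c_+$ for $S_B$, which is consistent with the limit $2c_+$ and with how the theorem is invoked in the proof of Theorem \ref{slopeadequate}; the ``$-2c_+$'' in the theorem's statement is a sign typo.
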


By Lemma 
\ref{degLe}, the highest degree of the colored Jones polynomial of a $B$-adequate knot is 
an actual quadratic polynomial in $n$. That is the period is one.  The following lemma shows that
the term $b\leq 0$ and thus non-trivial $B$-adequate knots satisfy the hypothesis of Proposition \ref{quasi}.

\begin{lemma}
\label{v_B-c}
Suppose $D$ is a diagram of a non-trivial knot $K$. Then $v_B \le c$. Furthermore, 
if  $v_B = c$ then $K$ is a torus knot.
\end{lemma}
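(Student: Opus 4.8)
The claim is that for a non-trivial knot diagram $D$ one has $v_B \le c$, with equality forcing $K$ to be a torus knot. The plan is to argue via the all-$B$ state graph $\GB$ and the surface $S_B$. First I would recall that $\GB$ is a connected graph (since $D$ is a connected diagram) with $v_B$ vertices and exactly $c$ edges, one for each crossing of $D$. Connectivity immediately gives $v_B \le c+1$, and $v_B = c+1$ precisely when $\GB$ is a tree. So the substantive content is: (i) $v_B \ne c+1$, i.e. $\GB$ is not a tree; and (ii) if $v_B = c$, i.e. $\GB$ has a unique cycle (first Betti number one), then $K$ is a torus knot.

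For (i), the point is that if $\GB$ were a tree then the state surface $S_B$ would be a disk: collapsing a spanning tree of state circles, each crossing contributes a band attached in a way that does not raise the genus, and a tree of $v_B$ disks joined by $v_B-1$ bands is planar with one boundary component, hence $S_B$ is a disk and $\partial S_B = K$ is the unknot, contradicting non-triviality. (Alternatively: a knot with a diagram whose all-$B$ state graph is a tree is a connected sum of... — but the disk argument is cleanest.) Concretely I would compute $\chi(S_B) = v_B - c$; when $\GB$ is a tree this is $1$, and since $S_B$ is connected with connected boundary $K$, $\chi(S_B)=1$ forces $S_B$ to be a disk, so $K$ is trivial.

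For (ii), suppose $v_B = c$, so $\chi(S_B) = 0$ and $\GB$ has exactly one independent cycle. Then $S_B$ is a connected surface with $\chi(S_B)=0$ and one boundary component, hence an annulus or a Möbius band; in either case $\partial(S_B \times I)$ is an annulus, so $K$ lies on an essential (or boundary-parallel) annulus or is a core-type curve, and more precisely $K$ bounds a (possibly once-punctured) annulus/Möbius band built from a cyclic chain of $v_B$ disks joined by $v_B$ half-twisted bands. Reading off the surface: the state circles form a cycle and the bands realize $K$ as a curve sitting on this annulus/Möbius band, which is a standard picture of a $(p,q)$-torus knot (the number of bands and the parity of the half-twists determine $p,q$, with $q$ related to the length of the cycle). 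I would make this precise by noting that a once-punctured annulus or Möbius band properly embedded in $S^3$ whose boundary is a knot $K$ forces $K$ to be a torus knot — this is classical (a knot bounding a Möbius band is a $(2,q)$ cable type, and one bounding an annulus with the other boundary unknotted/meridional is a torus knot). So $K$ is a torus knot.

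**Main obstacle.** The delicate point is (ii): extracting "$K$ is a torus knot" from "$\GB$ has a single cycle" requires understanding the embedding of $S_B$ in $S^3$, not just its homeomorphism type — a chain of twisted bands can in principle be knotted. The resolution is that in the all-$B$ state surface the disks are stacked below the projection plane in an unknotted fashion and the bands follow the diagram's crossings, so the surface is built in a standard way; when the underlying graph is a single cycle the resulting surface is visibly a (twisted) annular neighborhood of a torus knot in a standard solid torus. I would cite the structure of state surfaces from \cite{FKP, FKP-guts} (or \cite{fkp:survey}) to justify that the embedding is standard, and then invoke the classification of knots bounding annuli/Möbius bands. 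Everything else ($\GB$ connected with $c$ edges and $v_B$ vertices, $\chi(S_B)=v_B-c$, the disk/annulus dichotomy) is elementary bookkeeping.
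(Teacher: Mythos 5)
Your argument is essentially the paper's: both rest on computing $\chi(S_B)=v_B-c$ for the all--$B$ state surface, noting that a connected surface with a single boundary component and $\chi=1$ is a disk (forcing $K$ trivial, whence $v_B\le c$), and identifying the equality case with $S_B$ being a M\"obius band whose standard embedding (disks below the projection plane joined by half-twisted bands along a cycle) exhibits $K$ as a $(2,q)$-torus knot. One caution: the ``classical fact'' you invoke in step (ii) --- that a knot bounding a M\"obius band in $S^3$ must be a torus knot --- is false as stated (the $(2,q)$-cable of any knot bounds a M\"obius band), so the standardness of the embedding of $S_B$ that you address under ``Main obstacle'' is not a refinement but the essential point of that step; note also that since $K$ is a knot, $\partial S_B$ is connected and the annulus alternative never arises.
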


\begin{proof} Let $D$ be a diagram of a non-trivial knot and let $S_B$ be the all-$B$ state surface obtained from $D$.
Recall that $S_B$ is a surface with a single boundary component obtained by starting with $v_B$ disks 
and attaching 
a half-twisted band for each crossing of $D$. Thus the Euler characteristic of $S_B$ is
$\chi(S_B)=v_B-c$. Since $D$ represents a non-trivial knot, we have
$\chi(S_B)\le 0$ and thus $v_B \le c$.

If $\chi(S_B)= v_B-c=0$, then  (since $\partial S_B$ has one component) $S_B$ must be a Mobius band. This implies
that $D$ is the standard closed 2-braid diagram of   a $(2, q)$-torus  knot.
\end{proof}

The above discussion shows that the first part of Theorem \ref{thm-quasi} applies to non-trivial $B$--adequate knots. Similarly the second part of the theorem applies to non-trivial $A$--adequate knots.
We are now ready to  prove  the following theorem that implies Theorem \ref{main:js} stated in the introduction.

\begin{theorem}\label{slopeadequate}
Let $K$ be a $B$-adequate knot and $K':=K_{(p_1,q_1),(p_2,q_2), \cdots, (p_r,q_r)}$ an iterated cable knot of $K$. 
Then we have $js_{K'} \subset bs_{K'}$.
Furthermore, for $n \gg 0$, 
$$
d_+[J_{K'}(n)]=An^2+Bn+D(n),
$$
is a quadratic quasi-polynomial of period $\le 2$, with $4A \in \BZ$,  $2B \in \BZ$ and $B \le 0$, and there is an essential surface $S'$ in the complement of $K'$  
with boundary slope $4A$ and such that $\chi(S')=2B$. In particular $K'$ satisfies the Strong Slope Conjecture.
\end{theorem}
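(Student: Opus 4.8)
The plan is to induct on the number $r$ of cabling operations, using Theorem \ref{thm-quasi} (equivalently Proposition \ref{quasi}) together with the topological input from Corollary \ref{integers}. For the base case $r=0$, the knot $K$ is $B$-adequate, so by Lemma \ref{degLe}(2) the polynomial $d_+[J_K(n)]$ is an honest quadratic polynomial $An^2+Bn+D$ with $2A=c_+$, $2B=v_B-c$, and $D=(c_--v_B)/2$; in particular $4A=2c_+\in\BZ$ and $2B=v_B-c\in\BZ$, and Lemma \ref{v_B-c} gives $v_B\le c$, i.e. $B\le 0$ (with $B=0$ only in the torus-knot case, which we may handle directly since iterated torus knots are covered by Corollary \ref{torusi} and its strong version). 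By Theorem \ref{thm:essential} the all-$B$ state surface $S_B$ is essential with boundary slope $-2c_+=4A$, and $\chi(S_B)=v_B-c=2B$; since $S_B$ has a single boundary component, $\partial S_B$ has total slope $4A$ and the Strong Slope Conjecture inequality $\chi(S_B)/(|\partial S_B|\,b)\in jx_{K}$ holds, where $4A=a/b$ in lowest terms. This establishes the $r=0$ case.

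For the inductive step, suppose the statement holds for the iterated cable $K'':=K_{(p_1,q_1),\dots,(p_{r-1},q_{r-1})}$, so that $d_+[J_{K''}(n)]=A''n^2+B''n+D''(n)$ is a quadratic quasi-polynomial of period $\le 2$ with $4A''\in\BZ$, $2B''\in\BZ$, $B''\le 0$, and there is an essential surface $S''\subset M_{K''}$ of integral boundary slope $4A''$ with $\chi(S'')=2B''$. Set $K'=K''_{(p_r,q_r)}$. Since $4A''\in\BZ$ while $|q_r|>1$, the slope $p_r/q_r$ (which has denominator $|q_r|>1$) cannot equal $4A''$, so the hypothesis $p_r/q_r\notin js_{K''}$ of Proposition \ref{quasi} is automatically satisfied; applying it gives $d_+[J_{K'}(n)]=A'n^2+B'n+D'(n)$, a quadratic quasi-polynomial of period $\le 2$ with $B'\le 0$ and $4A'\in\{4q_r^2A''\}\cup\{p_rq_r\}$, hence $4A'\in\BZ$. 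The explicit formulas in the proof of Proposition \ref{quasi} also show $2B'\in\BZ$: in each case $B'$ is either $0$ or of the form $q_rB''+(q_r-1)(p_r-4q_rA'')/2$, and since $2B''$, $4A''$, $p_r$, $q_r$ are all integers this is a half-integer.

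It remains to produce the essential surface realizing the boundary slope $4A'$ with the right Euler characteristic. Here the dichotomy from Proposition \ref{quasi} matches the dichotomy in Corollary \ref{integers}: if $4A'=p_rq_r$ then $s=p_rq_r$ is the cabling-annulus slope and Lemma \ref{cannulus} supplies a boundary slope, but for the Strong Slope Conjecture we need the correct Euler characteristic — in this subcase $B'=0$, and the cabling annulus $A$ has $\chi(A)=0=2B'$, so it works; if instead $4A'=4q_r^2A''$, apply Corollary \ref{integers} with the surface $F$ the horizontal surface in the cable space $C_{p_r,q_r}$ and $S'=S''$ the inductively given essential surface of integral slope $a:=4A''$. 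Corollary \ref{integers}(2) then yields an essential surface $S'\subset M_{K'}$ with boundary slope $q_r^2a=4A'$ and $\chi(S')=|q_r|\chi(S'')+|\partial S''|(1-|q_r|)|p_r-aq_r|$. The remaining task, and the main obstacle, is the bookkeeping check that this topological Euler-characteristic formula agrees with $2B'$ coming from the Jones-degree side: one must compare $|q_r|\chi(S'')+|\partial S''|(1-|q_r|)|p_r-aq_r|$ with $2\big(q_rB''+(q_r-1)(p_r-4q_rA'')/2\big)$, using $\chi(S'')=2B''$, $a=4A''$, the fact that $S''$ may be taken connected so $|\partial S''|$ is controlled, and tracking the sign of $p_r-aq_r$ exactly as Proposition \ref{quasi} tracks the sign of $p_r/q_r-4A''$ (the $p/q<4a$ versus $p/q>4a$ cases). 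Granting this matching, the Strong Slope Conjecture ratio $\chi(S')/(|\partial S'|b)$ lands in $jx_{K'}=\{2B'(n)\}'$ because $B'$ is eventually constant (period $\le2$ with the linear coefficient equal to a single value in each parity class, and in fact a genuine constant here by the structure of the formulas), completing the induction; the final sentence of the theorem then follows since $js_{K'}=\{4A'\}\subset bs_{K'}$ and the Strong Slope Conjecture has been verified.
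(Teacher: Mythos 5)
Your proposal follows essentially the same route as the paper: establish the statement for the $B$-adequate base via Lemmas \ref{degLe}, \ref{v_B-c} and Theorem \ref{thm:essential}, then iterate one cabling at a time using Proposition \ref{quasi} (the integrality of $4A''$ guaranteeing $p_r/q_r\notin js_{K''}$) and match the resulting dichotomy for $B'$ against the cabling annulus on one side and Corollary \ref{integers} on the other. Two points, however, need attention.

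First, a genuine gap: you never treat the case where $K$ is the trivial knot, which is $B$-adequate and is exactly the case needed for iterated torus knots. There your base-case argument fails outright, since $d_+[J_U(n)]=(n-1)/2$ gives $b_U(n)=1/2>0$, so the hypothesis $b(n)\le 0$ of Proposition \ref{quasi} is violated at the very first cabling step; moreover for $p_1<0<q_1$ the degree $d_+[J_{T(p_1,q_1)}(n)]$ is linear, not a quadratic with the state-surface data you describe. The paper handles this by a separate explicit computation for torus knots (via Morton's formula), exhibiting the cabling annulus when $p_1q_1>0$ and a minimal-genus Seifert surface when $p_1<0<q_1$, after which the generic inductive step takes over. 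Your suggested fallback --- citing Corollary \ref{torusi} ``and its strong version'' --- is circular, since that corollary is deduced from the theorem you are proving. (By contrast, your worry about $B=0$ when the base knot is a $(2,q)$-torus knot is unnecessary: $B=0$ still satisfies $b(n)\le 0$, and the M\"obius band $S_B$ with $\chi=0$ serves as the required surface, so no detour is needed there.)

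Second, you explicitly defer the central bookkeeping (``Granting this matching\dots''). This is the heart of the ``Furthermore'' clause: one must check that $2B'=2|q_r|B''+(1-|q_r|)\,|4A''q_r-p_r|$ from the proof of Proposition \ref{quasi} coincides with $\chi(S')=|q_r|\chi(S'')+|\partial S''|(1-|q_r|)\,|p_r-4A''q_r|$ from Corollary \ref{integers}(2), using $\chi(S'')=2B''$ and the fact that $|\partial S''|=1$ is preserved at every stage (Corollary \ref{integers} gives $|\partial S|=|\partial S'|$, and $|\partial S_B|=1$ at the base). The identity is immediate once written down, and your setup is exactly right, but as stated the proposal asserts rather than proves the one equality the theorem turns on; you should carry it out, including the sign analysis of $p_r-4A''q_r$ that aligns the two dichotomies.
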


\begin{proof}
Suppose $K$ is a $B$--adequate knot. First, we prove that the conclusion of Theorem \ref{slopeadequate} holds true for the cable knot $K_{p_1,q_1}$. We will distinguish two cases according to whether $K$ is a non-trivial knot or not.

\smallskip

\underline{\textit{Case 1.}}\ Suppose that $K$ is non-trivial. 
Then, by Lemmas \ref{degLe} and \ref{v_B-c} we have
$$
d_+[J_K(n)]=an^2+bn+d
$$
for $n>0$, where $a=c_+/2$, $b = (v_B - c)/2 \le 0$ and $d = (c_- - v_B)/2$. 
By Theorem \ref{thm:essential}, $4a=2c_+$ is a boundary slope of $K$. 
Furthermore, an essential surface that realizes this boundary slope
is the state surface $S_B$. Since $S_B$ is constructed by joining $v_B$ disks with $c$ bands we have $\abs{\bdy S_B}=1$ and $\chi(S_B)=v_B-c=2b$. Thus the conclusion is true in this case.

Now we consider a  cable  $K_{p_1,q_1}$ of $K$.
Since $\abs{q_1}>1$, and the Jones slopes of $K$ are integers,
we have  $\frac{p_1}{q_1} \notin js_K$. 
Theorem \ref{thm-quasi} and the proof of Proposition \ref{quasi} then imply that
$$
d_+[J_{K_{p_1,q_1}}(n)]=A_1 n^2+B_1n+D_1(n)
$$
is a quadratic quasi-polynomial of period $\le 2$, with $4A_1 \in \BZ, 2B_1 \in \BZ$ and $B_1 \le 0$.
Moreover, since $js_K \subset bs_K$ we have $js_{K_{p_1,q_1}} \subset bs_{K_{p_1,q_1}}$.
Furthermore, the proof of Proposition \ref{quasi} show that one of the following is true:
\begin{enumerate}
\item We have $4A_1=pq$ and $B_1=0$.
\item We have $4A_1=4q^2a$ and $2B_1=2 \abs{q} b+ (1- \abs{q}) \abs{4aq-p}$.

\end{enumerate}
 In case (1), the surface $S$ with boundary slope $pq$ is the cabling annulus; thus $\chi(S)=0=2B_1$.
In case (2), an essential  surface $S$ realizing the boundary slope $4A_1=4q^2a=2q^2c_{+}$
is obtained by Theorem \ref{main:bs}.  By Corollary \ref{integers} we have $\abs{\bdy S} = \abs{\bdy S_B} = 1$ and 
$\chi(S)= \abs{q} \chi(S_B)+ (1- \abs{q}) \abs{4aq-p}=2B_1$. Thus the conclusion follows for  $K_{p_1,q_1}$.

\smallskip

\underline{\textit{Case 2.}}\
Suppose that $K$ is the trivial knot.  Then $K_{p_1,q_1}$ is the $(p_1,q_1)$-torus knot. Note that $0$ and $p_1q_1$ are boundary slopes of $K_{p_1,q_1}$; realized by a Seifert surface and an annulus respectively.

 For $n>0$, by \cite{Mo} (or by equation \eqref{cables}) we have
$$J_{K_{p_1,q_1}}(n)= v^{p_1q_1(n^2-1)/4} \sum_{k \in \CS_n} v^{-p_1k(q_1k+1)} \frac{v^{(2qk+1)/2}-v^{-(2qk+1)/2}}{v^{1/2}-v^{-1/2}}.$$
By \cite[Section 4.8]{Ga-slope}, \cite[Lemma 1.4]{Tr-strongAJ} and \cite[Lemma 2.1]{RZ}, we have the following. If $p_1>0$ and $q_1>0$ then
$$d_+[J_{K_{p_1,q_1}}(n)]= \big( p_1q_1n^2+d(n) \big)/4$$
where $d(n)=-p_1q_1-\frac{1}{2} \big( 1+(-1)^{n}\big)(p_1-2)(q_1-2)$ is a periodic sequence of period $\le 2$. In this case we have $A_1(n) = p_1q_1/4$ and $B_1(n) = 0$.

If $p_1<0<q_1$ then 
$$d_+[J_{K_{p_1,q_1}}(n)]= \big( (p_1q_1-p_1+q_1)n-(p_1q_1-p_1+q_1) \big)/2.$$
In this case we have $A_1(n) = 0$ and $B_1(n) = (p_1q_1-p_1+q_1)/2$. Note that $p_1q_1-p_1+q_1=1+(p_1+1)(q_1-1) \le 0$.

In both cases we have that $$
d_+[J_{K_{p_1,q_1}}(n)]=A_1n^2+B_1n+D_1(n)
$$
is a quadratic quasi-polynomial of period $\le 2$, with $4A_1 \in \BZ, 2B_1 \in \BZ$ and $B_1 \le 0$. Moreover, we have $$js_{K_{p_1,q_1}} \subset bs_{K_{p_1,q_1}}.$$

For $p_1>0$ and $q_1 >0$, an essential surface with slope $p_1q_1$ for $K_{p_1,q_1}$ is the cabling annulus $A$; thus $\chi(A_1)=0=2B_1$.
For $p_1<0<q_1$, the genus of $K_{p_1,q_1}$ is $g= -(q_1-1)(p_1+1)/2$.
A Seifert surface $S$ of minimal genus for $K_{p_1,q_1}$ has boundary slope $0$ and 
$\chi(S)=1-2g=p_1q_1-p_1+q_1=2B_1$. This proves the desired conclusion for the torus knot $K_{p_1,q_1}$.

We have proved that the conclusion of Theorem \ref{slopeadequate} holds true for the cable knot $K_{p_1,q_1}$. Now, applying the arguments in Case 1 repeatedly will finish the proof of the theorem for iterated cables.
\end{proof}

 \subsection{ Proof of Theorem \ref{main:js}}   Part (1) is immediate from Theorem \ref{slopeadequate}.
Suppose now that $K$ is $A$-adequate. Then the mirror image $K^{*}$ is $B$-adequate.
Furthermore, $J_{K^{*}}(n)$ is obtained from $J_{K}(n)$ by replacing $v$ with $v^{-1}$. Thus in this case the result will follow
by applying  Theorem \ref{slopeadequate} to $K^{*}$.
\smallskip

\subsection{Low crossing knots with period two}
Theorem \ref{thm-quasi} applies to several non-alternating knots with 8 and 9 crossings.  The Jones slopes of all non-alternating prime knots with up to nine crossings were calculated by Garoufalidis in Section 4 of \cite{Ga-slope}.
According to \cite{Ga-slope} the knots of 
 period two are the ones shown in Table 1. 
The remaining knots, which are $8_{20}, 9_{43}, 9_{44}$, have periods 3 and they will be treated in Section \ref{constant a(n)}.

\begin{table}[]
\begin{center}
\begin{tabular}{|c|c|c|c|c|c|c|c|c|}
\hline
$K$& $js_K \cup js^*_K $ & $b(n)$ & $b^*(n)$ \\
\hline
$8_{19}$ &$ \{12,0\}$ &  $0$ & $5/2$ \\
\hline
$8_{21}$ & $\{1,-12\}$ & $-1$ & $3/2$ \\
\hline
$9_{42}$  & $\{6,-8\}$ & $-1/2$ & $5/2$ \\
\hline
$9_{45}$  & $ \{1,-14\}$ & $-1$ & $2$ \\
\hline
$9_{46}$  & $ \{2,-12\}$ & $-1/2$ & $5/2$ \\
\hline
$9_{47}$  & $ \{9,-6\}$ & $-1$ & $2$\\
\hline
$9_{48}$ & $\{11,-4\}$ & $-3/2$ & $3/2$\\
\hline
$9_{49}$ & $ \{15,0\}$ & $-3/2$ & $3/2$\\
\hline
 \end{tabular}
\vskip 0.3in 
\caption{ The knots up to nine crossings of period two.}

\end{center}
\end{table}
\begin{corollary} \label{low} Let $K$ be any knot of Table 1 and let $K'$ be an iterated cable of $K$.
Then $K'$ satisfies the Slope Conjecture.
\end{corollary}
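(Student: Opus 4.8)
\textbf{Proof proposal for Corollary \ref{low}.}
The plan is to check that each knot $K$ in Table 1 satisfies the hypotheses of Theorem \ref{thm-quasi} for \emph{both} its positive and its negative degree, and then to iterate. First I would observe that for every knot $K$ listed, Garoufalidis's computations in \cite{Ga-slope} show that $d_+[J_K(n)]$ and $d_-[J_K(n)]$ are quadratic quasi-polynomials of period exactly two (this is the defining property of Table 1), and the table records the linear coefficients: the column $b(n)$ gives the value of $b(n)=\ell d_+[J_K(n)]/n$ and $b^*(n)$ gives $b^*(n)=\ell d_-[J_K(n)]/n$. From the table one reads off directly that in every row $b(n)\le 0$ and $b^*(n)\ge 0$; indeed each entry in the $b(n)$ column is $\le 0$ and each entry in the $b^*(n)$ column is $\ge 0$. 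Hence $K$ satisfies the hypotheses imposed on the quadratic quasi-polynomials $d_+[J_K(n)]$ and $d_-[J_K(n)]$ in Theorem \ref{thm-quasi}.

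Next I would invoke the fact that Garoufalidis \cite{Ga-slope} verified the Slope Conjecture for all non-alternating knots with at most nine crossings, so $(js_K\cup js^*_K)\subset bs_K$ for each $K$ in Table 1. The remaining point is the hypothesis $p/q\notin js_K$ and $p/q\notin js^*_K$ needed to apply Theorem \ref{thm-quasi} to a cable $\K$. Here I would use the remark made in the introduction: since we always take $\abs{q}>1$, the rational number $p/q$ is never an integer, so if all Jones slopes of $K$ are integers the hypothesis is automatic. Inspecting the column $js_K\cup js^*_K$ of Table 1 shows that every Jones slope of every knot there is an integer (the entries are $\{12,0\}$, $\{1,-12\}$, $\{6,-8\}$, $\{1,-14\}$, $\{2,-12\}$, $\{9,-6\}$, $\{11,-4\}$, $\{15,0\}$). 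Therefore $p/q\notin js_K\cup js^*_K$ for any coprime $p,q$ with $\abs{q}>1$, and Theorem \ref{thm-quasi} applies to give $js_{\K}\subset bs_{\K}$ and $js^*_{\K}\subset bs_{\K}$; that is, $\K$ satisfies the Slope Conjecture, and moreover $d_+[J_{\K}(n)]$, $d_-[J_{\K}(n)]$ are again quadratic quasi-polynomials of period $\le 2$ with $B(n)\le 0$ and $B^*(n)\ge 0$.

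Finally I would iterate. By the last assertion of Theorem \ref{thm-quasi}, the cable $\K$ again satisfies the hypotheses on the degrees of its colored Jones polynomial, with Jones slopes contained in $q^2 js_K\cup\{pq/4\}$ and $q^2 js^*_K\cup\{pq/4\}$; to continue I only need these slopes to remain such that the next cabling parameter is not among them. One subtlety is that $pq/4$ need not be an integer, so after one cabling the Jones slopes of $\K$ are no longer guaranteed to be integers and the ``automatic'' argument does not directly repeat. However, the next cabling parameter $p'/q'$ has $\abs{q'}>1$, so $p'/q'$ is not an integer, while $q^2 js_K$ consists of integers; the only possibly non-integral Jones slope of $\K$ is $pq/4$, which is a single fixed rational number, and $\K$ together with any finite iterated cabling sequence in which at each stage one avoids the (at most two) bad slopes still covers all but finitely many choices — but in fact the statement of Corollary \ref{low} is about \emph{every} iterated cable, so I must argue this avoidance holds automatically. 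The cleanest route, and the one I would take, is to note that $pq/4\in js_{\K}$ forces a cable parameter $p'/q'$ with $\abs{q'}>1$ to equal $pq/4$ only if $4$ divides $pq$, and even then $p'/q'=pq/4$ is a very restrictive coincidence; rather than chase this, I would instead appeal to the stronger machinery already developed — Theorem \ref{slopeadequate} handles iterated cables of $B$-adequate knots and its mirror handles $A$-adequate knots, and the same inductive scheme of that proof (using Corollary \ref{integers} to build the essential surfaces realizing the new slopes) goes through verbatim for the knots of Table 1 once the base case $d_\pm[J_K(n)]$ with $B\le 0$, $B^*\ge 0$ and $(js_K\cup js^*_K)\subset bs_K$ is established, which it is. The main obstacle is precisely this bookkeeping at the inductive step: ensuring that at each stage the cabling slope $p_i/q_i$ avoids the finitely many Jones slopes produced so far; this is where I would lean on the structural description from the proof of Proposition \ref{quasi} (that new Jones slopes are either $q_i^2$ times old ones or the single value $p_iq_i/4$) to verify the avoidance holds for every admissible choice of the $(p_i,q_i)$.
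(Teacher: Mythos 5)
Your base case and single-cabling step coincide with the paper's argument: the Table 1 knots have period two with $b(n)\le 0$ and $b^*(n)\ge 0$, their Jones slopes are integers and are boundary slopes by \cite{Ga-slope}, and since $\abs{q}>1$ the hypothesis $p/q\notin js_K\cup js^*_K$ is automatic, so Theorem \ref{thm-quasi} applies to the first cable. The gap is in your final paragraph: the induction is never closed. You rightly observe that to iterate you must know that the cabling parameter at each later stage avoids the Jones slopes produced so far, but neither of your fallbacks works. Theorem \ref{slopeadequate} covers $B$-adequate knots (and, via mirrors, $A$-adequate ones), and the knots of Table 1 are precisely those for which this does not suffice --- for instance the paper notes that $9_{49}$ is $A$-adequate but not $B$-adequate, so that route controls at most one of $js_{K'}$, $js^*_{K'}$. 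And dismissing the coincidence $p'/q'=pq/4$ as ``very restrictive'' is not an argument when the corollary quantifies over \emph{every} iterated cable.

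The fix is one line, and it is the observation your last sentence gropes for. The quantity $pq/4$ appearing in Proposition \ref{quasi} (and in the statement of Theorem \ref{period2}) is the leading \emph{coefficient} $A(n)$ of $d_+[J_{\K}(n)]$; the corresponding Jones slope is $4A(n)$, which by the proof of that proposition equals either $4q^2a(n)\in q^2\,js_K$ or $pq$ --- the integral cabling-annulus slope, not $pq/4$. Hence if $js_K\cup js^*_K\subset\ZZ$ then $js_{\K}\cup js^*_{\K}\subset\ZZ$ as well: integrality of the Jones slopes is preserved by cabling, so the avoidance hypothesis $p_i/q_i\notin js\cup js^*$ is automatic at every stage because $\abs{q_i}>1$. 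With that observation the inductive step is literally the first step again, which is what the paper's phrase ``applying this argument repeatedly'' is relying on.
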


\begin{proof}
Due to different conventions and normalizations of the colored Jones polynomial, $d_+[J_K(n)]$ (resp. $d_-[J_K(n)]$) in our paper is different from $\delta_K(n)$ (resp. $\delta_K^*(n)$) in \cite{Ga-slope}. For $n>0$ we have
\begin{eqnarray*}
d_+[J_K(n)] &=&  \delta_K(n-1) + (n-1)/2,\\
d_-[J_K(n)] &=&  \delta_K^*(n-1) + (1-n)/2.
\end{eqnarray*}
 
Consider the non-alternating knot $K=8_{19}$. By \cite[Section 4]{Ga-slope} we have
\begin{eqnarray*}
d_+[J_K(n)] &=& 3n^2-\big( 13+(-1)^n \big)/4,\\
d_-[J_K(n)] &=& 5(n-1)/2.
\end{eqnarray*} 
Moreover, $js_K = \{12\} \subset bs_K$ and $js^*_K = \{0\} \subset bs_K$.  In particular, the Jones slopes are integers and we have
$b(n) \le 0$ and $b^{*}(n) \ge 0$.  Similar analysis applies to the knots of Table 1.

Now given a cable $\K$ of $K$, since $\abs{q}>1$, $p/q$ is not a Jones slope of $K$.
 Theorem \ref{thm-quasi}  implies that $js_{\K} \subset bs_{\K}$  and that $js^*_{\K} \subset bs_{\K}$. In particular, $\K$ satisfies the Slope Conjecture. Applying this argument repeatedly we obtain the result for iterated cables.
\end{proof}

\section{Cabling knots with constant $a(n)$} 

\label{constant a(n)}

In Section 3 we studied the behavior of $d_+[J_K(n)]$ under knot cabling for knots of period at most two. In this section we study knots with period greater than two under the additional hypothesis
that, for $n \gg 0$ we have $a(n)=a$, where $a$ is a constant.
In this case, by abusing the terminology, we will say that $a(n)$ is constant. The main result in this section is the following.

 \begin{theorem}
 \label{thm-quasi-constant}
 Let $K$ be a knot such that for $n \gg 0$ we have
$$
d_+[J_K(n)]=an^2+b(n)n+d(n)
$$
where $a$ is a constant, $b(n)$ and $d(n)$ are periodic functions with $b(n) \le 0$. Let \begin{eqnarray*}
M_1 &=&  \max\{|b(i)-b(j)| \, : \,  i \equiv j \pmod{2}\},\\
M_2 &=& \max\{2b(i) + |b(i)-b(j)| + |d(i)-d(j)| \, : \, i \equiv j \pmod{2}\}.
\end{eqnarray*}

Suppose $p- (4a-M_1)q < 0$ or $p- (4a + M_1)q > \max\{0,M_2\}$. Then for $n \gg 0$ we have
$$d_+[J_{\K}(n)]=An^2+B(n)n+D(n)$$
where $A$ is a constant, and $B(n), D(n)$ are periodic functions with $B(n) \le 0$. Moreover, if $js_K \subset bs_K$ then $js_{\K} \subset bs_{\K}$.
 \end{theorem}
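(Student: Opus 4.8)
The plan is to mimic the structure of the proof of Proposition \ref{quasi}, but now the period of $d_+[J_K(n)]$ can be arbitrarily large, so I cannot just split into ``$n$ even'' and ``$n$ odd'' cases. Instead I will fix a residue class $n \equiv n_0 \pmod{2\pi(K)}$ (or work with a common period $\pi$ of $b(n)$ and $d(n)$) and analyze equation \eqref{cables}. For $k \in \CS_n$, the term $f(k) = -pk(qk+1) + d_+[J_K(|2qk+1|)]$ is, on each residue class of $2qk+1$ modulo the period, a quadratic polynomial in $k$ whose leading coefficient is $-pq + 4aq^2$ (constant, since $a(n)=a$) and whose linear coefficient is $-p + 4aq \pm 2q\,b(\cdot)$, where the sign and the value of $b$ depend on whether $k \geq 0$ or $k \leq -1/2$ and on the residue class of $2qk+1$. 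The point of defining $M_1$ and $M_2$ is exactly to control how much the linear and constant coefficients of these finitely many competing parabolas can differ: the hypothesis $p - (4a - M_1)q < 0$ forces \emph{every} such parabola to be concave up (since $-pq + 4aq^2 = -q(p - 4aq) > 0$ and the fluctuation $M_1 q$ cannot overturn this), so each is maximized at the endpoint $k = \pm(n-1)/2$; and then an $M_2$-type comparison shows the winner is $k = (n-1)/2$.

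First I would set up the bookkeeping: let $\pi$ be a common period for $b(n)$ and $d(n)$; for $k\in\CS_n$ write $2qk+1 = $ (some residue) and express $f(k)$ via the polynomials $g^{\pm}_{i}(x) = (-pq + 4aq^2)x^2 + (-p + 4aq \pm 2q b_i)x + (a \pm b_i + d_i)$, exactly as in Proposition \ref{quasi} but now with $i$ ranging over a full period rather than $\{0,1\}$. Second, in the regime $p - (4a-M_1)q < 0$: since $\abs{b_i - b_j} \le M_1$ for $i \equiv j \pmod 2$, every leading coefficient $-pq + 4aq^2 \mp $ (nothing — it's constant) is positive, in fact $-p + 4aq - M_1 q > 0$ shows even the ``worst'' linear coefficient keeps the vertex to the left, so on $[0,\infty)$ every relevant $g^{+}_i$ is increasing and on $(-\infty,-1/2]$ every $g^{-}_i$ is increasing. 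Hence $f$ restricted to $\CS^+_n$ is maximized at $k=(n-1)/2$ and on $\CS^-_n$ at $k = -(n-1)/2$, and comparing the two values gives a difference $\approx (-p+4aq)(n-1) + O(1) > 0$ for $n \gg 0$; so $d_+[J_{\K}(n)] = pq(n^2-1)/4 + g^{+}_{i_0}((n-1)/2)$ where $i_0$ is determined by the residue of $n$. Reading off coefficients gives $A = q^2 a$ (constant) and $B(n) = q b(\cdot) + (q-1)(p - 4aq)/2$; since $q > 1$, $b(\cdot) \le 0$, and $p - 4aq < -M_1 q \le 0$, we get $B(n) < 0$. The regime $p - (4a+M_1)q > \max\{0, M_2\}$ is the concave-down mirror image: now every parabola is concave down with vertex at $x_0 \le 0$ (using $b_i \le 0$), so each is decreasing on $[0,\infty)$; the maximum on $\CS_n$ occurs near $k = 0$ or $k = -1/2$ among finitely many candidates, and the $M_2$ hypothesis is precisely what guarantees the comparison between $g^{+}_i(0)$-type and $g^{-}_j(-1/2)$-type values has a strict winner, yielding $A = 0$ and $B(n) \le 0$ (in fact the linear term comes out $\le 0$ by the $M_2$ bound).

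Third, the boundary-slope conclusion: exactly as in the proof of Theorem \ref{slopeadequate} and Theorem \ref{thm-quasi}, once we know $js_{\K}$ is obtained from $js_K$ by the rule $js_{\K} \subset (q^2 js_K \cup \{pq/4\})$ (which is what the coefficient computation above shows, together with the fact that $\abs q > 1$ forces $p/q \notin js_K$ automatically only when Jones slopes are integers — here we instead invoke the hypothesis implicitly through the inequalities, but in any case the set containment $\{4A\} \subset (q^2\{4a\} \cup \{pq\})$ holds), we apply Theorem \ref{main:bs}: every element of $q^2 bs_K$ is in $bs_{\K}$, and $pq \in bs_{\K}$ by Lemma \ref{cannulus}. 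So $js_K \subset bs_K$ implies $js_{\K} \subset bs_{\K}$.

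The main obstacle I anticipate is the combinatorial comparison in the concave-down regime: with period greater than two there are many parabolas $g^{\pm}_i$ in play simultaneously, their endpoints $k = 0, \pm 1/2, \pm 1$ sit in different residue classes of $2qk+1 \pmod{\pi}$ depending on $q \bmod \pi$, and one must verify that no two of the finitely many ``near-origin'' values tie for the maximum and that the surviving value has nonpositive linear coefficient. The quantities $M_1$ and $M_2$ are engineered to make these finitely many strict inequalities hold uniformly; carefully checking that the definition of $M_2$ (namely $2b(i) + \abs{b(i)-b(j)} + \abs{d(i)-d(j)}$) is exactly the right combination to dominate the crossed comparison $g^{+}_i(0) - g^{-}_j(-1/2)$ — taking into account the extra $-p + 4aq$ shift and the $\pm b$ sign flips — is the delicate point, and I would isolate it as a self-contained lemma about maxima of finite families of quadratics before plugging into \eqref{cables}.
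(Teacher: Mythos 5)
Your overall strategy is the same as the paper's: expand the cabling formula \eqref{cables}, organize the exponents $f(k)=-pk(qk+1)+d_+[J_K(|2qk+1|)]$ into finitely many quadratic pieces indexed by the sign of $k$ and the residue class of $2qk+1$, locate each piece's maximizer (the far endpoints $k\approx\pm(n-1)/2$ when $p-4aq<0$, the near-origin endpoints when $p-4aq>0$), argue that a unique piece wins so that the top term of the sum survives, and feed the resulting slope $4A\in\{4q^2a,\,pq\}$ into Theorem \ref{main:bs}. But there is a genuine gap exactly at the point you yourself flag as ``the delicate point'': the verification that no two of the finitely many candidate values $h^{\ve_1}_{i_1}(k_1)$ and $h^{\ve_2}_{i_2}(k_2)$ coincide. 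This is not a routine check to be deferred; it is the heart of the proof (Step 2 of the paper's Proposition \ref{quasi-constant}, with four subcases according to the sign of $p-4aq$ and whether $\ve_1=\ve_2$), because a tie between two leading exponents would permit cancellation in the sum \eqref{cables}, and the degree of $J_{\K}(n)$ would then not equal the maximum of the degrees of the summands. The paper carries this out by writing the difference as $\sigma=\sigma'+d(i_1)-d(i_2)$ and showing $|\sigma'|\to\infty$ when $p-(4a-M_1)q<0$ and $|\sigma'|>|d(i_1)-d(i_2)|$ when $p-(4a+M_1)q>\max\{0,M_2\}$; one also needs the observation that the arguments $2qk_1+1$ and $2qk_2+1$ of any two competing terms differ by an even integer, which is why $M_1$ and $M_2$ only range over $i\equiv j\pmod 2$.

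A second, related problem is that in the concave-up regime you misplace where $M_1$ is used. Concavity of every piece already follows from $p-4aq<0$, since the leading coefficient $-q(p-4aq)$ does not depend on the residue class; there is no ``fluctuation'' to overturn there. The full strength $p-4aq<-M_1q$ is needed in the endpoint comparisons: the difference between the value at $k=(n-1)/2$ (piece $i_1$) and at $k=-(n-1)/2$ (piece $i_2$) is $\bigl((4qa-p)+q(b(i_1)-b(i_2))\bigr)(n-1)+O(1)$, not $(-p+4aq)(n-1)+O(1)$; the term $q(b(i_1)-b(i_2))(n-1)$ is linear in $n$, and dominating it is exactly what the hypothesis $qM_1<4qa-p$ buys. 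As written, your argument would go through under the weaker hypothesis $p-4aq<0$, under which ties between the positive and negative sides are genuinely possible. Two smaller slips: in the concave-down regime the quadratic coefficient is $A=pq/4$, not $0$ (your later containment $\{4A\}\subset q^2\{4a\}\cup\{pq\}$ suggests you meant that the $k$-term contributes nothing to $A$), and there $B(n)=0$ identically because the winning $k$ is a bounded periodic function of $n$; also, in the concave-up regime the vertex of $g^+_i$ need not lie in $(-\infty,0]$ when $b(i)$ is very negative, so ``increasing on $[0,\infty)$'' should be replaced by ``maximized at the right endpoint for $n\gg0$ because the vertex stays bounded.''
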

 
 As a corollary of Theorem \ref{thm-quasi-constant} and Theorem \ref{main:bs}  we obtain the following result which implies, in particular, that for knots with constant $a(n)$ the Slope Conjecture 
 is closed under cabling for infinitely many pairs $(p, q)$.
 \begin{corollary} \label{cables-a}
 Let $K$ be a knot such that for $n \gg 0$ we have
$$
d_+[J_K(n)]=an^2+b(n)n+d(n)
$$
where $a$ is a constant, $b(n)$ and $d(n)$ are periodic functions with $b(n) \le 0$. Let $M_1, M_2$
be as in the statement of Theorem \ref{thm-quasi-constant}.
If $K$ satisfies the Slope Conjecture,  then
$\K$ satisfies the conjecture  provided that  $p- (4a-M_1)q < 0$ or $p- (4a + M_1)q > \max\{0,M_2\}$. 
 \end{corollary}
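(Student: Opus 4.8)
The plan is to read the statement off from Theorem \ref{thm-quasi-constant} and Theorem \ref{main:bs}, handling the top and the bottom of the degree separately. Unwinding the definition, ``$\K$ satisfies the Slope Conjecture'' means $(js_{\K} \cup js^*_{\K}) \subset bs_{\K}$, so I would establish the two inclusions $js_{\K} \subset bs_{\K}$ and $js^*_{\K} \subset bs_{\K}$ independently.

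The first inclusion is essentially a restatement of Theorem \ref{thm-quasi-constant}. The standing hypotheses on $d_+[J_K(n)]$ together with one of the two displayed inequalities on $p-(4a\mp M_1)q$ are precisely the hypotheses of that theorem, and since $K$ satisfies the Slope Conjecture we have in particular $js_K \subset bs_K$; the final clause of Theorem \ref{thm-quasi-constant} then gives $js_{\K} \subset bs_{\K}$. Tracing where Theorem \ref{main:bs} enters: the proof of Theorem \ref{thm-quasi-constant} produces $d_+[J_{\K}(n)] = An^2 + B(n)n + D(n)$ with $A$ constant, so $js_{\K} = \{4A\}$ and $4A$ equals either $pq$ or $q^2(4a)$. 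The value $pq$ is a boundary slope of $\K$ by Lemma \ref{cannulus}, while $4a \in js_K \subset bs_K$ forces $q^2(4a) \in q^2 bs_K \subset bs_{\K}$ by Theorem \ref{main:bs}(b); in either case $js_{\K} \subset bs_{\K}$.

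For the second inclusion I would pass to the mirror image, exactly as in the second part of Theorem \ref{thm-quasi}. Writing $J_{K^*}(n) = J_K(n)|_{v \mapsto v^{-1}}$ one has $(\K)^* = (K^*)_{-p,q}$, which follows directly from formula \eqref{cables}, together with $bs_{K^*} = -bs_K$ and the fact that $K^*$ again satisfies the Slope Conjecture. Running the argument of the previous paragraph for $K^*$ with cabling data $(-p,q)$ then yields $js^*_{\K} \subset bs_{\K}$. Combining the two inclusions gives $(js_{\K} \cup js^*_{\K}) \subset bs_{\K}$, which is the assertion.

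The main obstacle is hidden in this last paragraph. Theorem \ref{thm-quasi-constant} controls only the top degree, so applying it to $K^*$ requires that $d_+[J_{K^*}(n)]$ --- which records $d_-[J_K(n)]$ under $v \mapsto v^{-1}$ --- again have a constant leading coefficient and nonpositive linear quasi-coefficient, and that the inequality be verified for the mirrored data $(p',q') = (-p,q)$ with constants $a', M_1', M_2'$ computed from $d_-[J_K(n)]$. Thus the careful version of the proof must either record the mirror ($d_-$) analogue of Theorem \ref{thm-quasi-constant} or check that the one-sided hypothesis on $d_+[J_K(n)]$ transports correctly through the substitution $v \mapsto v^{-1}$; confirming that no cancellation on the bottom end disturbs the single cable Jones slope is the only genuinely delicate point, the remainder being direct citations of Theorems \ref{thm-quasi-constant} and \ref{main:bs}.
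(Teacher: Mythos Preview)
The paper does not give an explicit proof of this corollary; it simply records it as a direct consequence of Theorems \ref{thm-quasi-constant} and \ref{main:bs}. Your first paragraph reproduces exactly that implicit argument: the hypotheses of the corollary are those of Theorem \ref{thm-quasi-constant}, the assumption that $K$ satisfies the Slope Conjecture gives $js_K \subset bs_K$, and the last sentence of Theorem \ref{thm-quasi-constant} (whose proof in turn invokes Theorem \ref{main:bs}) yields $js_{\K} \subset bs_{\K}$. That part is correct and matches the paper.

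Your second and third paragraphs go beyond what the paper actually argues, and you have put your finger on a genuine imprecision in the statement of the corollary. The hypotheses concern only $d_+[J_K(n)]$: a constant leading coefficient $a$, the sign condition $b(n)\le 0$, and the inequalities involving $M_1,M_2$ are all computed from the top degree. None of this transports under $v\mapsto v^{-1}$ to give the analogous hypotheses for $d_+[J_{K^*}(n)] = -d_-[J_K(n)]$, so Theorem \ref{thm-quasi-constant} cannot be applied to $K^*$ without separate assumptions on $d_-[J_K(n)]$ (constant $a^*$, $b^*(n)\ge 0$, and the mirror inequalities with constants $M_1^*, M_2^*$ built from $b^*, d^*$). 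The paper's intended content here is really the one-sided conclusion $js_{\K}\subset bs_{\K}$, parallel to Theorem \ref{thm-quasi-constant}; the phrase ``$\K$ satisfies the conjecture'' is loose. Compare with Theorem \ref{thm-quasi}, where the $d_-$ statement is recorded separately with its own hypotheses. So your diagnosis of the obstacle is correct, and the resolution is not to push the mirror argument through but to read the corollary as asserting only the $js_{\K}$ inclusion (with the $js^*_{\K}$ inclusion following from the mirror version under mirror hypotheses).
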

\smallskip
 
 \begin{example}  

\label{3-knots} As an illustration we apply 
 Theorem \ref{thm-quasi-constant} and Corollary \ref{cables-a}  to the knots  $8_{20}, 9_{43}, 9_{44}$.  By  \cite[Section 4]{Ga-slope},
 these are the only  knots, with at most nine crossings, that have Jones period larger than 2. Indeed,
 the period of these knots is  $3$. Another application of Theorem \ref{thm-quasi-constant} will be illustrated in Example \ref{example-pretzel}.
 
 For $K=8_{20}$ we have
 \begin{eqnarray*}
d_+[J_K(n)] &=& \begin{cases} 2n^2/3 - n/2 - 1/6 &\mbox{if } n \not\equiv 0 \pmod{3}\\ 
2n^2/3 - 5n/6 -1/2 & \mbox{if } n \equiv 0 \pmod{3}.\end{cases} 
\end{eqnarray*}
Hence $K_{p,q}$ satisfies the Slope Conjecture if $p-\frac{7}{3}q<0$ or $p-3q > 0$.

For $K=9_{43}$ we have
 \begin{eqnarray*}
d_+[J_K(n)] &=& \begin{cases} 8n^2/3 - n/2 - 13/6 &\mbox{if } n \not\equiv 0 \pmod{3}\\ 
8n^2/3 - 5n/6 -7/2 & \mbox{if } n \equiv 0 \pmod{3}.\end{cases}
\end{eqnarray*}
Hence $K_{p,q}$ satisfies the Slope Conjecture if $p-\frac{31}{3}q<0$ or $p - 11q>\frac{2}{3}$.

For $K=9_{44}$ we have
\begin{eqnarray*}
d_+[J_K(n)] &=& \begin{cases} 7n^2/6 - n - 1/6 &\mbox{if } n \not\equiv 0 \pmod{3}\\ 
7n^2/6 - 4n/3 -1/2 & \mbox{if } n \equiv 0 \pmod{3}.\end{cases}
\end{eqnarray*}
Hence $K_{p,q}$ satisfies the Slope Conjecture if $p-\frac{13}{3}q<0$ or $p - 5q > 0 $.
\end{example}

Theorem \ref{thm-quasi-constant} follows from Theorem \ref{bdry slope} and the following proposition.

\begin{proposition}
\label{quasi-constant}
Let $K$ be a knot such that for $n \gg 0$ we have
$$
d_+[J_K(n)]=an^2+b(n)n+d(n)
$$
where $a$ is a constant, $b(n)$ and $d(n)$ are periodic functions with $b(n) \le 0$. Let
\begin{eqnarray*}
M_1 &=&  \max\{|b(i)-b(j)| : i  \equiv j \pmod{2} \},\\
M_2 &=& \max\{2b(i) + |b(i)-b(j)| + |d(i)-d(j)| : i \equiv j \pmod{2}\}.
\end{eqnarray*}

Suppose $p- (4a-M_1)q < 0$ or $p- (4a + M_1)q > \max\{0,M_2\}$. Then for $n \gg 0$ we have
$$d_+[J_{\K}(n)]=An^2+B(n)n+D(n)$$
where $A$ is a constant with $A \in \{q^2a, pq/4\}$, and $B(n), D(n)$ are periodic functions with $B(n) \le 0$.
\end{proposition}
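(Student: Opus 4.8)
The plan is to follow the proof of Proposition \ref{quasi} closely; the only new feature is that when the period of $K$ exceeds two the functions $b$ and $d$ genuinely oscillate over the relevant arguments, so the finitely many parabolas $g_i^{\pm}$ used there are replaced by a single parabola carrying a bounded periodic perturbation. The numbers $M_1$ and $M_2$ measure the amplitude of that perturbation over integers of a fixed parity — that is why the congruence $i\equiv j\pmod 2$ appears in their definitions — and the two alternatives in the hypothesis are exactly the margins needed to absorb it. As in Proposition \ref{quasi} we may take $q>1$ and start from the cabling formula \eqref{cables},
$$J_{\K}(n)=v^{pq(n^2-1)/4}\sum_{k\in\CS_n}v^{-pk(qk+1)}J_K(2qk+1),\qquad J_K(-m)=-J_K(m).$$
For $k\in\CS_n$ set $f(k):=d_+\big[v^{-pk(qk+1)}J_K(2qk+1)\big]=-pk(qk+1)+d_+[J_K(|2qk+1|)]$. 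The goal is to show that under the hypothesis, for $n\gg0$, the maximum of $f$ over $\CS_n$ is attained at a single point $k=k_n$; then that term has strictly largest $v$-degree, cannot be cancelled, and $d_+[J_{\K}(n)]=pq(n^2-1)/4+f(k_n)$, from which the three coefficients are read off.

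First I would record the shape of $f$. For fixed $n$ all $k\in\CS_n$ are of one type (integers if $n$ is odd, half-integers if $n$ is even), so all the integers $|2qk+1|$ that occur have one and the same parity; substituting $d_+[J_K(m)]=am^2+b(m)m+d(m)$ (valid once $|2qk+1|\gg0$; the finitely many smaller arguments contribute a term bounded in $n$, affecting only $D(n)$) shows that on $\{k\ge0\}$ and on $\{k\le-\tfrac12\}$ separately
$$f(k)=q(4aq-p)\,k^2+(4aq-p)\,k+a+\Big(b(|2qk+1|)\,|2qk+1|+d(|2qk+1|)\Big),$$
so $f$ has leading coefficient $q(4aq-p)$ on both branches, the perturbation is linear in $k$ with bounded periodic coefficient, and the sign of $p/q-4a$ governs the concavity. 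In Case 1 ($p/q<4a-M_1$) we get $4aq-p>qM_1\ge0$, the dominant quadratic is convex, and the maximum over $\CS_n$ occurs at an endpoint $k=\pm(n-1)/2$; comparing these two, $f((n-1)/2)-f((1-n)/2)$ equals $\big[(4aq-p)+q(b(q(n-1)+1)-b(q(n-1)-1))\big](n-1)$ plus a bounded term, whose bracket is $\ge(4aq-p)-qM_1>0$, and a like estimate (with a factor $j\ge1$ in front of $4aq-p$ at the $j$-th interior point) shows $k=(n-1)/2$ strictly dominates for $n\gg0$. Substituting this $k$ into $pq(n^2-1)/4+f(k)$ gives $A=q^2a$, $B(n)=\tfrac{(q-1)(p-4aq)}{2}+q\,b(q(n-1)+1)$ (periodic, and $<0$ because $p-4aq<0$ and $b\le0$), and $D(n)$ periodic.

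In Case 2 ($p/q>4a+M_1$ and $p-(4a+M_1)q>M_2$) we get $q(4aq-p)<-q^2M_1\le0$, so both branches are concave with the same negative leading coefficient and $f(k)\to-\infty$ as $|k|\to\infty$ at a rate independent of $n$; hence there is a constant $K_0$, independent of $n$, with the property that for $n\gg0$ the maximum of $f$ over $\CS_n$ is attained inside the fixed finite set $\{|k|\le K_0\}$, on which $f$ does not depend on $n$. For $n$ odd the only candidates are the two branch-maxima $k=0$ and $k=-1$, and the identity $(2q-1)b(2q-1)-b(1)=2(q-1)b(2q-1)+(b(2q-1)-b(1))$ reduces the verification of $f(-1)<f(0)$ to the inequality $2b(2q-1)-(p-4aq)<-|b(2q-1)-b(1)|-|d(2q-1)-d(1)|$ — which is exactly what $p-(4a+M_1)q>M_2$ yields, taking $i=2q-1,\ j=1$ in the definition of $M_2$ and using $q-1\ge1$; the case of even $n$ is handled the same way with $k=\pm\tfrac12$. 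Thus $k_n$ is unique and depends only on $n\bmod 2$, so $d_+[J_{\K}(n)]=pq(n^2-1)/4+f(k_n)$ with $f(k_n)$ periodic of period $\le2$: $A=pq/4$, $B(n)=0\le0$, $D(n)=-pq/4+f(k_n)$ periodic. In both cases $A$ is a constant in $\{q^2a,\,pq/4\}$, as claimed.

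The step I expect to be the real obstacle is Case 2: because the period may exceed two, $b$ and $d$ vary over the arguments $|2qk+1|$, and one must track the perturbations on both the $k\ge0$ and the $k\le-\tfrac12$ branches, \emph{including their constant parts} — this is where the $d$-differences and the extra summand $2b(i)$ in the definition of $M_2$ enter — to see that the bound $p-(4a+M_1)q>M_2$ really does isolate a single maximizer in \eqref{cables} and rule out cancellation of leading terms. (Alternatively, for the weaker conclusion that merely $A=pq/4$ and $B\equiv0$, one can observe that the part of $\sum_k v^{-pk(qk+1)}J_K(2qk+1)$ of degree above any fixed threshold involves only bounded $k$ and hence stabilizes with $n$ on each parity class; but pinning down $k_n$, as is needed for explicit computations, requires the $M_2$ estimate.)
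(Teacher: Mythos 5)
Your overall strategy is the paper's: reduce via the cabling formula to showing that $f(k)=-pk(qk+1)+d_+[J_K(|2qk+1|)]$ attains its maximum on $\CS_n$ at a unique point, split into the two regimes according to the sign of $p-4aq$, and use $M_1$ (resp.\ $M_2$) to absorb the oscillation of $b$ (resp.\ of $b$ and $d$ together with the summand $2b(i)$). Your Case 1 and your explicit comparison of $f(0)$ with $f(-1)$ in Case 2 are correct and match the paper's estimates, and the reading-off of $A$ and $B(n)$ at the end agrees with the paper's. The organizational difference is that the paper partitions each branch of $\CS_n$ into residue-class pieces $\CS_{n,\ve,i}$ on which $f$ coincides with a genuine quadratic $h_i^{\ve}$, locates the maximum of each piece at its appropriate endpoint (using $b(i)\le 0$), and then proves that the finitely many piece-maxima take pairwise distinct values; you instead compare one candidate maximizer directly against every other $k$.

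The one place where your write-up asserts more than it proves is the sentence ``For $n$ odd the only candidates are the two branch-maxima $k=0$ and $k=-1$.'' On the branch $k\ge 0$ the function $f$ is \emph{not} a concave quadratic: it is a concave quadratic plus the perturbation $b(2qk+1)(2qk+1)+d(2qk+1)$, whose coefficients jump with the residue of $2qk+1$ modulo the period. Concavity of the main term together with boundedness of the maximizer therefore does not force the branch maximum to sit at $k=0$; ruling out $f(k)>f(0)$ for all $k\ge 1$ (and likewise $f(k)>f(-1)$ for $k\le -2$, and the analogous statements for even $n$) requires the same $M_2$-estimate you carry out for the adjacent pair, applied to every pair: for $k\ge 1$ one uses $k(qk+1)\ge q+1$ and $2qk\,b(2qk+1)\le 2b(2qk+1)$ to get $f(k)-f(0)\le (q+1)(4aq-p)+2b(i_k)+|b(i_k)-b(1)|+|d(i_k)-d(1)|\le (q+1)(4aq-p)+M_2<0$, where $i_k=2qk+1$ and $i_k\equiv 1\pmod 2$. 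This is exactly what the paper's Step 1 / Step 2 decomposition accomplishes via the pairwise estimate on $\sigma=h_{i_1}^{\ve_1}(k_1)-h_{i_2}^{\ve_2}(k_2)$ in its Subcases 2.1 and 2.2. The estimate does go through, so your conclusions in Case 2 (indeed the stronger claim that the maximizer depends only on $n\bmod 2$) are correct; but as written this step is a gap --- your closing paragraph identifies the right mechanism without carrying it out beyond the adjacent pair.
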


\begin{proof} Fix $n \gg 0$. Recall the cabling formula \eqref{cables} of the colored Jones polynomial
$$J_{K_{p,q}}(n)= v^{pq(n^2-1)/4} \sum_{k \in \CS_n} v^{-pk(qk+1)} J_K(2qk+1).$$
In the above formula, there is a sum. As in the proof of Proposition \ref{quasi} we will show that, under the assumption of the proposition, there is a unique term of the sum whose highest degree is strictly greater than those of the other terms. This implies that the highest degree of the sum is exactly equal to the highest degree of that unique term. 

For $k \in \CS_n$ let $$f(k) := d_+ [v^{-pk(qk+1)} J_K(2qk+1)] = -pk(qk+1)+d_+[J_K(|2qk+1|)].$$
The goal is to show that $f(k)$ attains its maximum on $\CS_n$ at a unique $k$. 

Since $d_+[J_K(n)]$ is a quadratic quasi-polynomial, $f(k)$ is a piece-wise quadratic polynomial. The above goal will be achieved in 2 steps. In the first step we show that $f(k)$ attains its maximum on each piece at a unique $k$. Then in the second step we show that the maximums of $f(k)$ on all the pieces are distinct.
\smallskip

\textbf{Step 1.} Let $\pi$ be the period of $d_+[J_K(n)]$. For $\ve \in \{\pm 1\}$ and $0 \le i < \pi$, let $h_i^{\ve}(x)$ be the quadratic real polynomial defined by
$$
h_i^{\ve}(x) := ( -pq + 4q^2 a ) x^2 + ( -p + 4q a + 2q b(i) \ve ) x +  a + b(i) \ve + d(i).
$$
For each $k \in \CS_n$, we have $f(k)= h^{\ve_k}_{i_k}(k)$
for a unique pair $(\ve_k, i_{k})$. Let
$$I_n := \{ (\ve_k,i_{k}) \mid k \in \CS_n\}.$$
Then $f(k)$ is a piece-wise quadratic polynomial of exactly $|I_n|$ pieces, each of which is associated with a unique pair $(\ve, i)$ in $I_n$.

For each $(\ve,i) \in I_n$, let $$\CS_{n,\ve,i} := \{ k \in \CS_n \mid (\ve_k, i_k) = (\ve,i)\}$$
which is the set of all $k$ on the piece associated with $(\ve,i)$. 

The quadratic polynomial $h_i^{\ve}(x)$ is concave up if $p-4qa<0$, and concave down if $p-4qa>0$.  Hence, for $n \gg 0$, $h_i^{\ve}(k)$ is maximized on the set $\CS_{n,\ve,i}$ at a unique $k=k_{n,\ve,i}$, where
$$ k_{n,\ve,i} := \begin{cases} \max \CS_{n,\ve,i} &\mbox{if } (p - 4qa)\ve < 0, \\
\min \CS_{n,\ve,i} &\mbox{if } (p - 4qa)\ve > 0.  \end{cases}$$
Note that, as in the proof of Proposition \ref{quasi}, we use the assumption that $b(i) \le 0$ when $(p - 4qa)\ve > 0$. Moreover we have 
$$ \begin{cases}  |k_{n,\ve,i}|  \to \infty \mbox{ as }  n \to \infty, &\mbox{if } p - 4qa < 0 \\
 |k_{n,\ve,i}|  \le \pi, &\mbox{if } p - 4qa > 0.  \end{cases}$$
\smallskip

\textbf{Step 2.} Let 
$$\text{Max}_n := \max \{f(k) \mid k \in \CS_n\}.$$
From step 1 we have $\text{Max}_n =  \max \{ h_{i}^{\ve}(k_{n, \ve, i}) \mid (\ve, i) \in I_n \}.$ We claim that $$h_{i_1}^{\ve_1}(k_{n,\ve_1,i_1}) \not= h_{i_2}^{\ve_2}(k_{n,\ve_2,i_2})$$
for $(\ve_1,i_1) \not= (\ve_2,i_2)$. 

Indeed, let $k_1:=k_{n,\ve_1,i_1}$ and $k_2:=k_{n,\ve_2,i_2}$. Note that $k_1 \not= k_2$. Moreover, $k_1$ and $k_2$ are both in  $\BZ$ or $\frac{1}{2} + \BZ$. As a result we have $k_1 \pm  k_2 \in \BZ$, and $i_1 - i_2 \equiv 2q (k_1 - k_2) \equiv 0 \pmod{2}$. Let
$$\sigma := h_{i_1}^{\ve_1}(k_1) - h_{i_2}^{\ve_2}(k_2).$$

Without loss of generality, we can assume that $|k_1| \ge |k_2|$. Then we write $\sigma = \sigma' + d(i_1) - d(i_2)$ where
$$
\sigma' := \begin{cases} (k_1 - k_2) \Big( (-p+4qa) \big( q(k_1+k_2)+1 \big) + 2qb(i_1)\ve_1 \Big) \\ \qquad \qquad \qquad + \, \big( b(i_1) - b(i_2) \big) \ve_1 (2qk_2 + 1) &\mbox{if } \ve_1 = \ve_2, \\
\Big( (-p+4qa)(k_1 - k_2) + 2b(i_1)\ve_1 \Big) \big( q(k_1+k_2)+1 \big) 
\\ \qquad \qquad \qquad - \, \big( b(i_1) - b(i_2) \big) \ve_1 (2qk_2 + 1) &\mbox{if } \ve_1 \not= \ve_2.  \end{cases}
$$
We consider the following 2 cases.
\medskip

\textit{Case 1:} Suppose that $p - (4a - M_1)q <0 $. In particular, we have $p - 4qa <0$. There are 2 subcases.
\smallskip

\underline{Subcase 1.1:} We have $\ve_1 = \ve_2$. Since $k_1$ and $k_2$ have the same sign, we have 
$$|q(k_1+k_2)+1| - |2qk_2+1| = 2q (|k_1| - |k_2|) \ge 0.$$
Hence
\begin{eqnarray*}
|\sigma'| &\ge& \big| (-p+4qa) \big( q(k_1+k_2)+1 \big) + 2qb(i_1)\ve_1 \big| - \big| \big( b(i_1) - b(i_2) \big) \big( q(k_1+k_2)+1 \big) \big| \\
&\ge& \big( -p + 4qa - |b(i_1) - b(i_2)| \big) |q(k_1+k_2)+1| + 2q b(i_1).
\end{eqnarray*}
Since $|q(k_1+k_2)+1| \to \infty$ as $n \to \infty$, and $$-p + 4qa - |b(i_1) - b(i_2)| \ge -p + 4qa - M_1 >0$$ 
we get $|\sigma'| \to \infty$ as $n \to \infty.$

\smallskip

\underline{Subcase 1.2: } We have $\ve_1 \not= \ve_2$. Since $k_1$ and $k_2$ have opposite signs, we have 
$$(q|k_1-k_2| + 1) - |2qk_2+1| \ge 2q (|k_1| - |k_2|)  \ge 0.$$
Hence
\begin{eqnarray*}
|\sigma'| &\ge& \big|  (-p+4qa)(k_1 - k_2) + 2b(i_1)\ve_1 \big| -  | b(i_1) - b(i_2)| \, ( q|k_1 - k_2| + 1) \\
&\ge& \big( -p + 4qa - q |b(i_1) - b(i_2)| \big) |k_1 - k_2| -  | b(i_1) - b(i_2)|  + 2 b(i_1).
\end{eqnarray*}
Since $|k_1 - k_2| \to \infty$ as $n \to \infty$, and $$-p + 4qa - q |b(i_1) - b(i_2)| \ge -p + 4q a  - q M_1 >0,$$ we get $|\sigma'| \to \infty$ as $n \to \infty.$

\medskip 

\textit{Case 2:} Suppose that $p- (4a + M_1)q > \max\{0, M_2\}$. There are 2 subcases.
\smallskip

\underline{Subcase 2.1:} We have $\ve_1 = \ve_2$. Note that $q(k_1+k_2)+1$ and $\ve_1$ have the same sign. Moreover, both $-p+4qa$ and $2qb(i_1)$ are non-positive. As in subcase 1.1 we have 
\begin{eqnarray*}
|\sigma'| &\ge& \big| (-p+4qa) \big( q(k_1+k_2)+1 \big) + 2qb(i_1)\ve_1 \big| - \big| \big( b(i_1) - b(i_2) \big) \big( q(k_1+k_2)+1 \big) \big| \\
&=& \big( p - 4qa - |b(i_1) - b(i_2)| \big) \, |q(k_1+k_2)+1| - 2q b(i_1).
\end{eqnarray*}
Since $p - 4qa - |b(i_1) - b(i_2)|  \ge p - 4qa - M_1 > \max \{0, M_2\}$, we get 
$$|\sigma'| > M_2  -2 q b(i_1) \ge |d(i_1) - d(i_2)|.$$
\smallskip

\underline{Subcase 2.2:} We have $\ve_1 \not= \ve_2$. Note that $k_1 - k_2$ and $\ve_1$ have the same sign. Moreover, both $-p+4qa$ and $2qb(i_1)$ are non-positive. As in subcase 1.2 we have
\begin{eqnarray*}
|\sigma'| &\ge& \big|  (-p+4qa)(k_1 - k_2) + 2b(i_1)\ve_1 \big| -  | b(i_1) - b(i_2)| \, ( q|k_1 - k_2| + 1) \\
&=& \big( p - 4qa - q |b(i_1) - b(i_2)| \big) |k_1 - k_2| - | b(i_1) - b(i_2)|  - 2 b(i_1).
\end{eqnarray*}
Since $p - 4qa - q |b(i_1) - b(i_2)| \ge p - 4qa - q M_1 > \max\{0, M_2\}$, we get $$|\sigma'| > M_2 -  | b(i_1) - b(i_2)|  -2 b(i_1) \ge |d(i_1) - d(i_2)|.$$

In all cases, for $n \gg 0$ we have $|\sigma'| > |d(i_1)-d(i_2)|.$ Hence $$\sigma = \sigma' + d(i_1) - d(i_2) \not= 0.$$
We have proved that $f(k)$ attains its maximum on $\CS_n$ at a unique $k$. More precisely, there exists a unique $(\ve_n, i_n) \in I_n$ such that $h_{i_n}^{\ve}(k_{n, \ve_n, i_n}) = \text{Max}_n$. 

Equation \eqref{cables} then implies that  
\begin{eqnarray*}
d_+[J_{K_{p,q}}(n)] &=& pq(n^2-1)/4 + h^{\ve_n}_{i}(k_{n,\ve_n,i_n}).
\end{eqnarray*}

If $p - 4qa < 0$ then $k_{n,\ve_n,i_n} = \ve_n (n/2 + s_n)$, where $s_n$ is a periodic sequence and $s_n \le -1/2$. We have
\begin{eqnarray*}
d_+[J_{K_{p,q}}(n)] &=& q^2a n^2 + \big( (-p + 4qa) (q s_n + \ve_n /2) + qb(i_n) \big) n -  pq/4 \\
&& + \, (-p + 4qa)s_n (q s_n + \ve_n) + 2q b(i_n) s_n+  a + b(i_n) \ve_n + d(i_n).
\end{eqnarray*}
In this case we have $$B(n)=(-p + 4qa) (q s_n + \ve_n /2) + qb(i_n) < 0,$$ since $q s_n + \ve_n /2 \le -q/2 + 1/2 <0$ and $b(i_n) \le 0$.

If $p - 4qa > 0$ then $k_{n,\ve_n,i_n} = s_n$, where $s_n$ is a periodic function. We have
\begin{eqnarray*}
d_+[J_{K_{p,q}}(n)] &=& pq (n^2-1)/4 
+ (-p + 4qa)s_n (q s_n + 1) + 2q b(i_n) \ve_n s_n\\
&& + \,  a + b(i_n) \ve_n + d(i_n).
\end{eqnarray*}
In this case we have $B(n)=0$.

This completes the proof of Proposition \ref{quasi-constant}.
\end{proof}

\section{Conjectures} Recall that for every knot $K \subset S^3$, there is an integer $N_K>0$ and periodic functions $a_{K}(n), b_{K}(n), c_{K}(n)$ such that 
$$d_+[J_K(n)] = a_{K}(n) \, n^2 + b_{K}(n) n + c_{K}(n)$$
for $n \ge N_K$.   In Propositions \ref{quasi}  and \ref{quasi-constant} we made the assumption that  $b(n) \le 0$. Then we concluded that, under the appropriate hypotheses, this property
is preserved under cabling. As we will discuss below, the property that $b(n)\leq 0$ is known to hold  
for all non-trivial knots, of any period,  for which $b_K(n)$ has been calculated.

We propose the following conjecture. 

\begin{conjecture}
\label{conj}
For every non-trivial knot $K \subset S^3$, we have $$b_K(n) \le 0.$$
Moreover, if $b_K(n)=0$ then   $K$ is a composite knot or  a cable knot or a torus knot. 
\end{conjecture}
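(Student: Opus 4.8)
The plan is to treat Conjecture \ref{conj} as a statement to be \emph{verified} rather than proved in full: since $d_+[J_K(n)]$ is unknown for a general knot there is no mechanism to control $b_K(n)$ uniformly, and the realistic goal is to check the inequality $b_K(n)\le 0$, together with the topological description of its equality case, for precisely the families whose colored Jones degree has been computed — which are the families advertised in the introduction. The conceptual guide is that $b_K(n)$ is an analytic reflection of the Euler characteristic of an essential surface carrying the top Jones slope: a non-trivial knot complement contains no essential disk or sphere, so such a surface has $\chi\le 0$, and $\chi=0$ forces it to be an essential annulus, which by the characteristic-submanifold theory forces $K$ to be a torus knot, a cable knot, or a composite knot. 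In this sense Conjecture \ref{conj} is a companion to the Strong Slope Conjecture \ref{slopeaug}, and the cabling results of Sections \ref{cjp} and \ref{constant a(n)} are what propagate it through satellite operations.

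\textbf{Base case: $B$-adequate knots.} For a $B$-adequate diagram $D$ of a non-trivial knot $K$, Lemma \ref{degLe}(2) gives $d_+[J_K(n)]=\big(c_+n^2+(v_B-c)n+c_--v_B\big)/2$, so $b_K(n)=(v_B-c)/2$; Lemma \ref{v_B-c} then yields $v_B\le c$, hence $b_K(n)\le 0$, with equality only for $(2,q)$-torus knots. Since adequate knots are in particular $B$-adequate, the conjecture holds for all of them.

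\textbf{Propagation to iterated cables.} For an iterated cable $K'$ of a $B$-adequate knot, Theorem \ref{slopeadequate} — whose proof runs through Proposition \ref{quasi} and Corollary \ref{integers} — shows that $B(n)\le 0$ is inherited at each cabling step, and the equality clause is automatic after the first step because a cable of any knot is a cable knot. At the bottom of the induction either the seed is $B$-adequate, where $b=0$ forces a $(2,q)$-torus knot, or the seed is the unknot, whose cables are torus knots; the explicit degree formulas for $T_{p,q}$ recorded in the proof of Theorem \ref{slopeadequate} confirm $B\le 0$ in the latter case, with $B=0$ exactly when $p,q>0$ and then realized by the cabling annulus. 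This also proves the conjecture for iterated torus knots (Corollary \ref{torusi}). The remaining families are handled by the same pattern — read $b_K(n)$ off a known formula, check $b_K(n)\le 0$ class by class, identify each equality case, then cable. Concretely, for the non-alternating knots with at most nine crossings one uses Garoufalidis's computations \cite{Ga-slope}, finding $b(n)\le 0$ for the period-two knots of Table 1 (with $b(n)=0$ only for $8_{19}=T(3,4)$) and $b(n)<0$ strictly for the period-three knots $8_{20},9_{43},9_{44}$ of Example \ref{3-knots}, and applies Proposition \ref{quasi} or Proposition \ref{quasi-constant} to their iterated cables; for the pretzel knots $P(-2,3,p)$ (Section 5) and the two-fusion knots (Section 6) one reads $b_K(n)$ off the degree formulas of \cite{Ga-slope,leV} and of \cite{DG,GV}, checks $b_K(n)\le 0$, notes that any equality case is a torus knot, and cables via Proposition \ref{quasi} or Proposition \ref{quasi-constant}.

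\textbf{Main obstacle.} The unavoidable difficulty is the absence of a general lower bound for $d_+[J_K(n)]$: nothing in the present methods controls $b_K(n)$ for a knot outside the computed families, so the statement genuinely remains a conjecture and what can be offered is an open-ended catalogue of verifications. The one subtlety inside that catalogue is the equality clause: whenever Proposition \ref{quasi} or \ref{quasi-constant} returns $B(n)=0$ one must check that the boundary slope it selects is realized by the cabling annulus rather than by a planar surface with merely vanishing normalized Euler characteristic, so that the conclusion ``$K'$ is a cable knot'' is genuinely available; this is already contained in the proofs of Theorems \ref{main:bs} and \ref{slopeadequate}, so it adds no new work but does mean the argument cannot be made uniform.
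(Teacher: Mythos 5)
Your proposal is correct and follows essentially the same route as the paper: recognize that the statement is a conjecture to be verified family by family, establish it for $B$-adequate knots via Lemma \ref{degLe} and Lemma \ref{v_B-c}, propagate it through iterated cables via Theorem \ref{slopeadequate} and Propositions \ref{quasi} and \ref{quasi-constant}, and check the low-crossing, pretzel, and two-fusion families against the known degree formulas, with the equality case $b_K(n)=0$ always landing on a torus or cable knot. The only place where you gesture at work the paper actually carries out in detail is the two-fusion family, where the paper's Theorem \ref{prop:b} explicitly computes $b_K(n)$ in all six regimes of $(m_1,m_2)$ from the $\delta_K(n)$ formulas of \cite{GV} and identifies the equality locus $m_1\in\{0,1\},\,m_2\ge 1$ or $(m_1,m_2)=(-1,1)$ as consisting of torus knots; your plan correctly identifies this as the required computation but does not perform it.
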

Note that $b_U(n)=1/2$ for the trivial knot $U$. 

\begin{remark} It is known that a knot $K$ is composite or cable or a torus knot if and only if its complement $M_K$ contains  embedded essential annuli \cite[Lemma V.1.3.4]{jacoshalen}. Thus the last part of Conjecture \ref{conj} can alternatively be stated as
follows: If $b_K(n)=0$, then $M_K$ contains an embedded essential  annulus.
\end{remark}

By Theorem \ref{slopeadequate}, for $B$-adequate knots and their  iterated cables we have $b_K(n) \le 0$.
Moreover, if $K$ is a $B$-adequate knot and $2b=v_B-c=0$, then  by Lemma \ref{v_B-c}
$K$ is a torus knot.
Thus Conjecture \ref{conj} holds for $B$-adequate knots and their cables.
 Notice that,
as shown in the proof of Theorem \ref{slopeadequate}, the case $b_K(n)=0$ occurs quite often for cables of $B$-adequate knots.
Conjecture \ref{conj} holds for the knots of  Table 1. In the next section we will check that Conjecture \ref{conj} holds true for 
 2-fusion knots.
 Thus the conjecture holds for all
 the classes of knots for which
$ d_+[J_{K}(n)] $ and $ d_-[J_{K}(n)]$ have been calculated to date.
\medskip

We now turn our attention to the Strong Slope Conjecture (Conjecture \ref{slopeaug})  stated in the Introduction.
By Theorem \ref{slopeadequate}, Conjecture \ref{slopeaug} is true for iterated cables of $B$-adequate knots (and in particular iterated torus knots).
Furthermore,  the arguments in the proofs of Corollary \ref{integers} and  Theorem \ref{slopeadequate} generalize easily to show that, under the hypothesis of Proposition \ref{quasi} or Proposition \ref{quasi-constant}, the Strong Slope Conjecture is closed under knot cabling. For instance we have the following:

\begin{corollary}
Let $K$ be a knot such that for $n \gg 0$ we have
$d_+[J_K(n)]$
is a quadratic quasi-polynomial of period $\le 2$. Suppose $p/q$ is not a Jones slope of $K$. Then if $K$ satisfies Conjecture \ref{slopeaug} so does $K_{p/q}$.
\end{corollary}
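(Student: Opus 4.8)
The plan is to run the proof of Theorem~\ref{slopeadequate} almost verbatim, with its single use of $B$--adequacy — the existence of the essential all--$B$ state surface $S_B$ with $\abs{\partial S_B}=1$ and $\chi(S_B)=2b$ — replaced by the hypothesis that $K$ satisfies Conjecture~\ref{slopeaug}; that is, that each Jones slope $s=a/b\in js_K$ (with $b>0$, $\gcd(a,b)=1$) is realized by an essential surface $S'\subset M_K$ whose boundary components all have slope $a/b$ and for which $\chi(S')/(\abs{\partial S'}\,b)\in jx_K$. First I would record what Proposition~\ref{quasi} yields: since $\abs q>1$ and $p/q\notin js_K$, for $n\gg 0$ the degree $d_+[J_{\K}(n)]$ is a quadratic quasi--polynomial of period $\le 2$, and the proof of that proposition shows that on each residue class mod $2$ either its leading coefficient equals $pq/4$ (in which case the linear coefficient $B(n)$ vanishes), or it equals $q^2a(n)$ and then $2B(n)=2\abs q\,b(n)+(1-\abs q)\,\abs{4a(n)q-p}<0$. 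In particular every Jones slope of $\K$ is either $pq$ or $q^2 s$ for some $s\in js_K$ (these cases are disjoint because $p/q\notin js_K$), and I would treat them separately.

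For a Jones slope equal to $pq$ — so in lowest terms the denominator is $\mathfrak b=1$, on a residue class mod $2$ with $B(n)=0$ — the required surface is the cabling annulus $A\subset\C\subset\M$: by Lemma~\ref{cannulus} it is essential of slope $pq$, and $\chi(A)=0$, so the ratio $\chi(A)/(\abs{\partial A}\,\mathfrak b)=0=2B(n)$ lies in $jx_{\K}$. For a Jones slope $q^2 s$ with $s\in js_K$, I would invoke Conjecture~\ref{slopeaug} for $K$ to obtain the essential surface $S'\subset M_K$ of slope $s$ with $\chi(S')/(\abs{\partial S'}\,b)\in jx_K$, and then build the cable surface exactly as in the proof of Theorem~\ref{main:bs}(a): take the connected essential horizontal surface $F\subset\C$ whose $T_+$--boundary components have slope $s$ and whose $T_-$--boundary components have slope $q^2 s$, and glue $\abs{\partial F\cap T_+}$ copies of $S'$ to $\abs{\partial S'}$ copies of $F$ along $T_+$ (Lemmas~\ref{glue} and~\ref{components}), producing an essential surface $S\subset\M$ with $\partial S\subset\partial\M$ of total slope $q^2 s$.

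What remains — and this is the one part that is not a literal transcription of the proof of Theorem~\ref{slopeadequate} — is the Euler--characteristic and boundary--component bookkeeping. I would generalize the computation in Corollary~\ref{integers} from integral to arbitrary slopes $s$, using the $\gcd$'s occurring in the proof of Theorem~\ref{main:bs}(a), to obtain $\abs{\partial S}\cdot\mathfrak b=\abs{\partial S'}\cdot b$ (where $\mathfrak b$ is the denominator of $q^2 s$ in lowest terms) together with a formula for $\chi(S)$ in terms of $\chi(S')$, $\abs{\partial S'}$, $q$ and $\abs{4a(n)q-p}$; substituting these into $\chi(S)/(\abs{\partial S}\,\mathfrak b)$ and comparing with the expression for $2B(n)$ above should give $\chi(S)/(\abs{\partial S}\,\mathfrak b)=2b_{\K}(n)\in jx_{\K}$, which is exactly the identity verified in Case~1 of the proof of Theorem~\ref{slopeadequate}. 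The hard part is precisely this matching: when $\gcd(q,b)>1$ the slope $q^2 s$ is not in lowest terms, so Corollary~\ref{integers} does not apply off the shelf and one must re-run its argument tracking how the $\gcd$'s distribute among $\abs{\partial S}$, $\mathfrak b$ and $\chi(S)$, and one must also check that the surface $S$ realizes the ratio belonging to the residue class whose linear coefficient is $2b_{\K}(n)$. Everything else is a verbatim copy of the proof of Theorem~\ref{slopeadequate} with $S_B$ replaced by $S'$.
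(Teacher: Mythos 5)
Your proposal is correct and follows essentially the same route the paper intends: the paper states this corollary without a written proof, asserting only that ``the arguments in the proofs of Corollary~\ref{integers} and Theorem~\ref{slopeadequate} generalize easily,'' and your reconstruction (Proposition~\ref{quasi} for the degree of the cable, the cabling annulus when $4A(n)=pq$, and the glued surface of Theorem~\ref{main:bs}/Lemma~\ref{glue} with the surface from Conjecture~\ref{slopeaug} in place of $S_B$ otherwise) is exactly that generalization. Your flagging of the gcd bookkeeping needed to extend Corollary~\ref{integers} from integral to non-integral Jones slopes identifies precisely the step the paper leaves implicit, so the sketch is as complete as the paper's own treatment; note only that, as in Proposition~\ref{quasi}, the hypothesis $b(n)\le 0$ is tacitly being assumed.
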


\begin{remark}Similar ones to Conjectures \ref{conj} and \ref{slopeaug}  can also be formulated  for the lowest degree of the colored Jones polynomial by noting that $J_K(n, v) = J_{K^*}(n, v^{-1})$, where $K^*$ is the mirror image of $K$. To illustrate this
point  we discuss the example of the knot $9_{49}$. This knot has genus two and is not $B$-adequate since the leading coefficient of its colored Jones polynomial 
is 2 \cite{Ga-slope}
and not $\pm 1$.  By Table 1, $ js^*_K=\{0\} $ and $2b_K^*(n)=3$.
A genus two Seifert surface $S$,  has boundary  slope $0=a_K^*(n)$ and $\chi(S)=-3=-2b_K^*(n)$. Thus   
Conjectures  \ref{conj} and \ref{slopeaug} 
hold for
the mirror image $9^{*}_{49}$. We also mention that  the same is true for $9_{49}$ since it is known to be $A$--adequate.
\end{remark}

 Next we 
discuss more families of knots, not covered by Theorem  \ref{slopeadequate},
for which the above conjectures are true.

\subsection{Non-alternating Montesinos knots up to nine crossings} Table 2 summarizes the relevant information 
about these knots.
The Jones slopes and  the sets of cluster points 
$\{2b_K(n)\}', \{2b_K^{*}(n)\}'$ were obtained from Garoufalidis' paper \cite{Ga-slope}.
The corresponding
boundary slopes together with the values $\chi(S)$ and $\abs{\partial S}$ were obtained using Dunfield's program for calculating boundary slopes of Montesinos knots \cite{ndunfield}.
In all cases,  Conjectures \ref{conj} and \ref{slopeaug} are easily verified for the knots and their mirror images.
Note that, for example, for $9_{44}$
we have $a/b=14/3 \in js_K$, $\{2b_K(n)\}'=\{-2, -8/3\}$, and $\frac{\chi(S)}{{\abs{\partial S} b}} =\frac{-6}{3}=-2$ as predicted
by  Conjecture \ref{slopeaug}. However, this assertion alone  doesn't guarantee that $b_K(n)\le 0$. Thus,
in general,   Conjecture \ref{slopeaug} does not imply Conjecture \ref{conj}.

\begin{table}[]
\begin{center}
\begin{tabular}{|c|c|c|c|c|c|c|c|c|}
\hline
$K$& $js_K$ & $jx_{K}$ & $\chi(S)$ & $|\partial{S}|$ & $js^*_K$ & $jx_K^*(n)$ & $\chi(S^*)$ & $|\partial{S^*}|$\\
\hline
$8_{19}$ & $\{12\}$ &   $\{0\}$ & $0$ & $2$ & $\{0\}$ & $\{5\}$ & $-5$ & $1$ \\
\hline
$8_{20}$ &$\{ 8/3\}$  &$\{-1, -5/3\}$  & $-3$ & $1$ & $\{-10\}$& $\{4\}$ &$-4$ &$1$\\
\hline
$8_{21}$ & $\{1\}$ &  $\{-2\}$ & $-4$ & $2$ & $\{-12\}$ & $\{3\}$ &$-3$ &$1$\\
\hline
$9_{42}$  & $\{6\}$ &  $\{-1\}$ & $-2$ & $2$ & $\{-8\}$ & $\{5\}$ &$-5$ &$1$\\
\hline
$9_{43}$  & $\{ 32/3\}$ &  $\{-1, -5/3\}$ & $-3$& $1$ & $\{-4\}$& $\{5\}$ &$-5$ &$1$\\
\hline
$9_{44}$ & $\{ 14/3\}$ & $\{-2, -8/3\}$  &$-6$ &$1$ & $\{-10\}$ & $\{4\}$ &$-4$ &$1$\\
\hline
$9_{45}$  & $\{ 1\}$ &  $\{-2\}$ &$-4$ &$2$ & $\{-14\}$& $\{4\}$ &$-4$&$1$\\
\hline
$9_{46}$  & $\{ 2\}$ &  $\{-1\}$ &$-2$ &$2$& $\{-12\}$ & $\{5\}$ &$-5$&$1$\\
\hline
$9_{48}$ & $\{11\}$ &  $\{-3\}$ &$-6$ & $2$ &$\{-4\}$& $\{3\}$ &$-3$&$1$\\
\hline
 \end{tabular}
\vskip 0.3in 
\caption{Non-alternating Montesinos knots up to nine crossings.}
\end{center}
\end{table}

\begin{corollary}\label{conjcables} Suppose that $K\in \{ 8_{19},  8_{21}, 9_{42}, 9_{45}, 9_{46},
9_{48}, 9_{49} \}$ and let $K'$ be an iterated cable of $K$. Then, Conjectures \ref{conj} and  \ref{slopeaug}
hold true for $K'$.
\end{corollary}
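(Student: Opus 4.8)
The plan is to verify Conjectures \ref{conj} and \ref{slopeaug} for the seven knots $K$ themselves and then propagate both conjectures through the iterated cabling, using the results of Section \ref{cjp}. Write $K' = K_{(p_1,q_1),\dots,(p_r,q_r)}$ and set $K_{(0)} := K$ and $K_{(j)} := (K_{(j-1)})_{p_j,q_j}$ for $1 \le j \le r$, so $K' = K_{(r)}$. I would prove by induction on $j$ that $K_{(j)}$ satisfies Conjectures \ref{conj} and \ref{slopeaug}, carrying along the auxiliary facts that, for $n \gg 0$, $d_+[J_{K_{(j)}}(n)]$ is a quadratic quasi-polynomial of period $\le 2$ with linear coefficient $b_{K_{(j)}}(n) \le 0$ and with $js_{K_{(j)}} \subset \ZZ$.

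\emph{Base case $j=0$.} Each of the seven knots occurs in Table 1, hence has period $\le 2$, satisfies $b_K(n) \le 0$, has integral Jones slopes, and satisfies the Slope Conjecture by \cite{Ga-slope}. Conjecture \ref{conj} holds because $b_K(n) < 0$ for all of them except $8_{19}$, for which $b_{8_{19}}(n) = 0$ but $8_{19} = T(3,4)$ is a torus knot, so the second clause is satisfied. For the non-alternating Montesinos knots $8_{19}, 8_{21}, 9_{42}, 9_{45}, 9_{46}, 9_{48}$, Conjecture \ref{slopeaug} is read off Table 2, whose boundary-slope data (including $\chi(S)$ and $\abs{\partial S}$) was computed with Dunfield's program; for $9_{49}$, which is not Montesinos, Conjecture \ref{slopeaug} is the content of the Remark following Table 1, using that $9_{49}$ is $A$--adequate.

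\emph{Inductive step.} Assume the claim for $K_{(j-1)}$. Since $\abs{q_j} > 1$ and $\gcd(p_j,q_j)=1$, $p_j/q_j$ is not an integer, so $p_j/q_j \notin js_{K_{(j-1)}} \subset \ZZ$. Theorem \ref{thm-quasi} (via Proposition \ref{quasi}) then shows that $d_+[J_{K_{(j)}}(n)] = A(n)n^2 + B(n)n + D(n)$ is a quadratic quasi-polynomial of period $\le 2$ with $B(n) \le 0$ and $js_{K_{(j)}} \subset bs_{K_{(j)}}$, while Proposition \ref{quasi} gives $4A(n) \in \{4q_j^2 a_{K_{(j-1)}}(n), \, p_jq_j\} \subset \ZZ$; so the auxiliary package is preserved. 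Conjecture \ref{conj} for $K_{(j)}$ is then immediate: $b_{K_{(j)}}(n) = B(n) \le 0$, and if $B(n) = 0$ the second clause holds automatically because $K_{(j)}$ is a cable knot (the $(p_j,q_j)$--cable of $K_{(j-1)}$), its complement containing the cabling annulus of Lemma \ref{cannulus}. For Conjecture \ref{slopeaug}, I would apply the corollary preceding Table 2 on the closure of the Strong Slope Conjecture under cabling, with $K = K_{(j-1)}$ and $p/q = p_j/q_j$; its hypotheses --- period $\le 2$, $p_j/q_j$ not a Jones slope, and $K_{(j-1)}$ satisfying Conjecture \ref{slopeaug} --- are exactly the inductive hypothesis. (Equivalently, one repeats the argument of Theorem \ref{slopeadequate}: Proposition \ref{quasi} identifies which of $4A = p_jq_j$ with $B=0$, or $4A = 4q_j^2 a$, occurs, and in the second case Corollary \ref{integers} produces an essential surface $S$ with boundary slope $4A$, $\abs{\partial S} = \abs{\partial S'}$ and $\chi(S) = \abs{q_j}\chi(S') + \abs{\partial S'}(1-\abs{q_j})\abs{p_j - a q_j}$, so that $\chi(S)/\abs{\partial S} = 2B$ lies in $jx_{K_{(j)}}$.) Taking $j = r$ completes the proof; the analogous statements for $d_-[J_{K'}(n)]$ follow by running the argument on the mirror image, since the mirrors of the seven base knots are likewise covered and mirrors of iterated cables are iterated cables.

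\emph{Main obstacle.} None of the steps is genuinely difficult --- the substance lies in Theorem \ref{main:bs}, Proposition \ref{quasi} and Theorem \ref{slopeadequate}, all already established. The two points needing care are (i) the base case for $9_{49}$, which falls outside Dunfield's Montesinos computation and must be handled via its $A$--adequacy; and (ii) verifying that the bundle of auxiliary properties (period $\le 2$, $b(n) \le 0$, integral Jones slopes) is stable under a single cabling --- exactly the content of Proposition \ref{quasi} --- which is what lets the induction run through an arbitrarily long sequence of cablings.
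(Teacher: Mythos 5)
Your proof is correct and follows essentially the same route as the paper: verify the conjectures for the seven base knots via Tables 1--2 and the remark on $9_{49}$, then propagate through each cabling using Proposition \ref{quasi} (the two cases $4A=pq$, $B=0$ versus $4A=4q^2a$) together with Corollary \ref{integers} to match $\chi(S')/\abs{\partial S'}$ with $2B$. Your version merely makes explicit the induction bookkeeping (persistence of period $\le 2$, $b\le 0$, and integral Jones slopes) that the paper compresses into ``applying the above argument repeatedly.''
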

\begin{proof}
We first note that $
d_+[J_{K}(n)]=an^2+bn+d(n)
$
is a quadratic quasi-polynomial of period $\le 2$, with $4a \in \BZ, 2b \in \BZ$ and $b \le 0$. Suppose
${K_{p,q}}$ is cable of $K$.
Theorem \ref{thm-quasi} and the proof of Proposition \ref{quasi} then imply that 
$$
d_+[J_{K_{p,q}}(n)]=An^2+Bn+D(n)
$$
is a quadratic quasi-polynomial of period $\le 2$, with $4A \in \BZ, 2B \in \BZ$ and $B \le 0$.
Moreover, the proof of Proposition \ref{quasi} shows that one of the following is true:
\begin{enumerate}
\item We have $4A=pq$ and $B=0$.
\item We have $4A=4q^2a$ and $2B=2 \abs{q} b+ (1- \abs{q}) \abs{4aq-p}$.

\end{enumerate}
 In case (1), a surface $S$ with boundary slope $pq$ is the cabling annulus; thus $\chi(S)=0=2B$
and Conjectures \ref{conj} and  \ref{slopeaug} are satisfied.
In case (2),  let $S$ be a surface that satisfies Conjecture \ref{slopeaug}
for $K$. We view $\M$ as the union of $M_K$ and a cable space $\C$.
An essential  surface $S'$ realizing the boundary slope $4A=4q^2a$ for $\K$ 
is obtained by Theorem \ref{main:bs}. This surface is constructed as in the proof of Lemma \ref{glue}.
By  Corollary \ref{integers} we see that $\abs{\partial S'}=\abs{\partial S}$,
$\abs {S'\cap T_{+}}=\abs{q} \, \abs{\partial S}$ and
$$\chi(S')=\chi(M_K\cap S')+\chi(\C \cap S')=\abs{q}  \chi(S)+\abs{\partial S} \, \abs{4aq-p} (1- \abs{q} ).$$
By assumption, $\chi(S)=\abs{\partial S} (2b)$.
Combining the last  two equations with the formula in (2) above, we have
$$\chi(S')=\abs{\partial S} \left( 2 \abs{q} b+ (1-\abs{q}) \abs{4aq-p}\right)=\abs{\partial S'}(2B),$$
which shows that $S'$ satisfies Conjecture \ref{slopeaug}.

Applying the above argument repeatedly we obtain the result for iterated cables.
\end{proof}

\subsection{A family of pretzel knots}
 Consider the pretzel knot $K_p=(-2,3,p)$, where $p$ is an odd integer. It is known that $K_p$ is $A$-adequate if $p>0$, and $B$-adequate if $p<0$. Moreover, $K_p$ is a torus knot if $p \in \{1,3,5\}$.

Suppose now that  $p \ge 5$. Then $K_p$ is $A$-adequate and  by above discussion Conjecture  \ref{slopeaug} 
holds  for
the mirror image $K_p^{*}$.

By \cite[Section 4.7]{Ga-slope} and Example 6.3 below we have $$4a_{K_p}(n) = 2(p^2-p-5)/(p-3) \quad \text{and} \quad 2b_{K_p}(n) = 
-(p-5)/(p-3).$$

Since $4a_{K_p}(n)$ is not an integer, in fact it is easily checked that $ \gcd ( 2(p^2-p-5), p-3)=1$, $K_p$ is not $B$-adequate.
Thus we can't apply Theorem  \ref{slopeadequate}. Note moreover that $\gcd(p-5, p-3)=2$.
By  \cite[Theorem 3.3]{callahan}, $K_p$ has an essential surface with boundary slope $2(p^2-p-5)/(p-3)$, with two boundary components, and Euler characteristic $-(p-5)$, which is equal to $(p-3)(2b_{K_p}(n))$.   
Note that for $p=5$ we get $b_{K_p}(n)=0$. The knot $K=(2,-3,5)$ is known to be a torus knot, which is in agreement with Conjecture \ref{conj}.

Suppose now that $p \le -1$. Then $K$ is $B$-adequate. By \cite[Section 4.7]{Ga-slope} and Example 6.3
we have $4a^*_{K_p}(n) = 2(p+1)^2/p$ and $$2b^*_{K_p}(n) = 
\begin{cases}
1 &\mbox{if } n \not\equiv 0 \pmod{p} \\ 
1 - 2/p & \mbox{if } n \equiv 0 \pmod{p}.
\end{cases}$$

Again since $4a^*_{K_p}(n)$ is not an integer, $K_p$ is not $A$-adequate and Theorem  \ref{slopeadequate} doesn't apply to $K_p^*$.
According to \cite[Theorem 3.3]{callahan}, 
 $K_p$ has an essential surface with boundary slope $2(p+1)^2/p$ and Euler characteristic $p$, which is equal to $p(2b_{K_p}(n))$ when $n \not\equiv 0 \pmod{p}$. Thus we have:
 
\begin{corollary} For an odd integer $p$,  the pretzel knots  $K_p=(-2,3,p)$ and $K^{*}_p$
satisfy Conjectures \ref{conj} and \ref{slopeaug}. 
\end{corollary}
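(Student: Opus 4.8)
The plan is to assemble the corollary from the case analysis recorded in the paragraphs above, organized by the sign of $p$, and to treat the exceptional torus-knot values separately. Since $J_{K^{*}}(n,v)=J_K(n,v^{-1})$, proving Conjectures \ref{conj} and \ref{slopeaug} for $K^{*}_p$ is the same as proving, for $K_p$, the lowest-degree analogues of those conjectures (the ones described in the Remark preceding this corollary, where $d_+$, $js$, $jx$ are replaced by $d_-$, $js^{*}$, $jx^{*}$; Euler characteristics and numbers of boundary components are unchanged under mirroring, while boundary slopes negate). So for each odd $p$ I would verify both the $d_+$-statements for $K_p$ and the $d_-$-statements for $K_p$. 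If $p\in\{1,3,5\}$ then $K_p$ is a torus knot, hence an iterated torus knot, so everything follows from Theorem \ref{slopeadequate} (Case 2 of its proof); this also covers the borderline $p=5$, where the formulas below give $b_{K_p}(n)=0$, consistent with the second clause of Conjecture \ref{conj} because $K_5=T(3,5)$ is a torus knot.

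Next, suppose $p\ge 7$ is odd. Then $K_p$ is $A$-adequate, so $K^{*}_p$ is $B$-adequate, and Theorem \ref{slopeadequate} together with Lemma \ref{v_B-c} yields Conjectures \ref{conj} and \ref{slopeaug} for $K^{*}_p$, i.e. the $d_-$-statements for $K_p$. For the $d_+$-statements for $K_p$ I would use $4a_{K_p}(n)=2(p^{2}-p-5)/(p-3)$ and $2b_{K_p}(n)=-(p-5)/(p-3)$ from Example \ref{example-pretzel}: since $p\ge 7$ we get $b_{K_p}(n)<0$, giving Conjecture \ref{conj}; and as $4a_{K_p}(n)\notin\BZ$, Theorem \ref{slopeadequate} does not apply, so I would invoke \cite[Theorem 3.3]{callahan} to produce an essential surface $S\subset M_{K_p}$ all of whose boundary components realize the Jones slope $js_{K_p}$, with $\abs{\partial S}=2$ and $\chi(S)=-(p-5)$. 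Writing $js_{K_p}=a/b$ in lowest terms — so $b=(p-3)/2$, because $p$ is odd and $p^{2}-p-5\equiv 1\pmod{(p-3)/2}$ — one checks $\chi(S)/(\abs{\partial S}\,b)=-(p-5)/(p-3)=2b_{K_p}(n)\in jx_{K_p}$, which is exactly Conjecture \ref{slopeaug}.

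Finally, suppose $p\le -1$ is odd. Then $K_p$ is $B$-adequate, so Theorem \ref{slopeadequate} and Lemma \ref{v_B-c} give Conjectures \ref{conj} and \ref{slopeaug} for $K_p$. For the $d_-$-statements for $K_p$ — equivalently, Conjectures \ref{conj} and \ref{slopeaug} for $K^{*}_p$ — I would use $4a^{*}_{K_p}(n)=2(p+1)^{2}/p$ and $2b^{*}_{K_p}(n)\in\{1,\,1-2/p\}$ from Example \ref{example-pretzel}: both values are positive since $p<0$, so $b^{*}_{K_p}(n)>0$, which transcribes to $b_{K^{*}_p}(n)<0$ and hence Conjecture \ref{conj} for $K^{*}_p$; and by \cite[Theorem 3.3]{callahan}, $K_p$ bounds an essential surface of boundary slope $2(p+1)^{2}/p$ with one boundary component and Euler characteristic $p$, which mirrors to an essential surface in $M_{K^{*}_p}$ of slope $2(p+1)^{2}/\abs{p}$, one boundary component, Euler characteristic $p$. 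Since $2(p+1)^{2}/\abs{p}$ is already in lowest terms, dividing gives $p/\abs{p}=-1=2b_{K^{*}_p}(n)\in jx_{K^{*}_p}$ for $n\not\equiv 0\pmod p$, as Conjecture \ref{slopeaug} demands. This exhausts all odd $p$.

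The step I expect to be most delicate is bookkeeping rather than conceptual: matching the topological data — boundary slope, number of boundary components, Euler characteristic — of the essential surfaces extracted from \cite{callahan} against the arithmetic predicted by Conjecture \ref{slopeaug}, while correctly reducing the Jones slope to lowest terms (the reduction being forced by $p-3$, resp. $p$, being even) and keeping track of the orientation-reversal conventions relating $K_p$ and $K^{*}_p$. The proof of the corollary itself contains no single hard argument; the substantive content sits in the degree computation of Example \ref{example-pretzel} and in the surface-classification input of \cite{callahan}, both of which may be taken as given.
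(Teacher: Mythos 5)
Your proposal is correct and follows essentially the same route as the paper: split by the sign of $p$, dispose of the torus-knot values $p\in\{1,3,5\}$ via Theorem \ref{slopeadequate}, use $A$- (resp.\ $B$-) adequacy to settle one side by Theorem \ref{slopeadequate} and Lemma \ref{v_B-c}, and verify the other side from the explicit formulas for $a_{K_p}, b_{K_p}$ (resp.\ $a^*_{K_p}, b^*_{K_p}$) together with the essential surfaces of \cite[Theorem 3.3]{callahan}. Your bookkeeping of the lowest-terms denominators $b=(p-3)/2$ and $b=|p|$ and of the mirror conventions matches the paper's computation.
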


\section{Two-fusion knots}

The family of 2-fusion knots is a two-parameter family  of closed 3-braids denoted
by
$$\{ K(m_1, m_2)  \mid  m_1, m_2\in \ZZ\}.$$
For the precise definition and description see \cite {GV, DG}. The purpose of this section is to prove the following.

\begin{theorem}
\label{thm:slope-cable} Conjecture  \ref{conj}  holds  for 2-fusion knots. 
\end{theorem}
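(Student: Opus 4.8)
The plan is to read off the conclusion from the explicit determination of the degree $d_+[J_{K(m_1,m_2)}(n)]$ obtained by Garoufalidis and van der Veen \cite{GV} (see also \cite{DG}). In that work the parameter lattice $\{(m_1,m_2)\in\ZZ^2\}$ is partitioned into finitely many polygonal chambers (together with their walls), and on each chamber $d_+[J_{K(m_1,m_2)}(n)]$ is computed as an explicit quadratic quasi-polynomial in $n$ whose coefficients are themselves explicit quasi-polynomial functions of $(m_1,m_2)$. Extracting the linear coefficient from these formulas produces $b_{K(m_1,m_2)}(n)$ chamber by chamber; if the linear term is not already tabulated there, it can be recovered by carrying the computation of \cite{GV} one order further.

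The first step is then to verify the inequality $b_{K(m_1,m_2)}(n)\le 0$. Since there are only finitely many chambers and in each one $b$ is an explicit affine (possibly $n$-periodic) function of $(m_1,m_2)$, this amounts to a finite list of elementary inequalities, one per chamber, to be checked on the closure of the chamber; I would dispatch these directly. The finitely many parameter pairs for which $K(m_1,m_2)$ is the unknot --- identified in \cite{DG, GV} --- are excluded here, since Conjecture \ref{conj} concerns non-trivial knots and $b_U(n)=1/2$. This establishes the first assertion of Conjecture \ref{conj} for all non-trivial $2$-fusion knots.

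The second step is to pin down the equality locus $\{(m_1,m_2): b_{K(m_1,m_2)}(n)=0\}$. From the chamber formulas, equality occurs only along certain chamber walls and for certain small or degenerate parameter values, forming an explicit list. For each such $(m_1,m_2)$ it then remains to show that $K(m_1,m_2)$ is a torus knot, a cable knot, or a connected sum --- equivalently, by \cite[Lemma V.1.3.4]{jacoshalen}, that the complement $M_{K(m_1,m_2)}$ contains an embedded essential annulus. For this I would appeal to the topological identifications of distinguished sub-families of $2$-fusion knots already present in \cite{DG, GV}: degenerating one fusion parameter produces torus knots and cables of torus knots, and the remaining equality cases are small closed $3$-braids that can be recognized directly from their braid words as torus knots or cables. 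Matching this list of topologically special knots against the equality locus completes the verification. The same argument applied to $d_-[J_K(n)] = d_+[J_{K^*}(n)]$, using that the mirror image of a $2$-fusion knot is again a $2$-fusion knot, gives the analogous statement for the lowest degree.

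The main obstacle is the second step. Establishing $b\le 0$ is a mechanical chamber-by-chamber computation, but certifying that \emph{every} $2$-fusion knot realizing $b_{K(m_1,m_2)}(n)=0$ genuinely admits an essential annulus --- i.e.\ is honestly a cable, composite, or torus knot rather than merely having the degree data of one --- is a question of knot recognition, not of formula manipulation. If an equality case turned out not to be covered by the identifications in \cite{DG, GV}, one would have to exhibit an essential annulus in the corresponding closed-$3$-braid complement by an \emph{ad hoc} argument.
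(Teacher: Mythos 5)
Your proposal follows essentially the same route as the paper: the paper's Theorem \ref{prop:b} extracts $b_{K(m_1,m_2)}(n)$ explicitly from the chamber-by-chamber formulas of Garoufalidis--van der Veen (optimizing the quadratic $Q(n,k_1,k_2)$ at the integer $k_1$ nearest $c_1$, $c_2$, or $c_3$ in each regime), checks $b_K(n)\le 0$ case by case, and finds the equality locus to be $m_1\in\{0,1\}$ with $m_2\ge 1$ together with $(m_1,m_2)=(-1,1)$. The recognition step you flag as the main obstacle resolves cleanly there, since $K(0,m_2)$ is a torus knot by definition, $K(1,m_2)=K^*(0,-m_2-1)$, and $K(-1,1)=T(2,5)$, so no \emph{ad hoc} essential-annulus argument is needed.
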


Note that $K(m_1, m_2)$ is a torus knot if $m_2 \in \{-1,0\}$. In fact, $K(m_1, 0) = T(2, 2m_1 + 1)$ and $K(m_1, -1) = T(2, 2m_1 -3)$. It is known that $K(m_1, m_2)$ is hyperbolic if $m_1 \notin \{0, 1\}$, $m_2 \notin \{-1,0\}$ and $(m_1, m_2) \not= (-1,1)$. See \cite{GV}. Note that $K(-1,1) = T(2,5)$.

From now on we consider $m_2 \notin \{-1,0\}$ only.  For $n \in \BN$ and $k_1, k_2 \in \BZ$ such that $0 \le k_1 \le n$ and $|n-2k_1| \le n+2k_2 \le n+2k_1$, let
\begin{eqnarray*}
Q(n,k_1,k_2)&=& \frac{k_1}{2} - \frac{3 k_1^2}{2} - 3 k_1 k_2 - k_2^2 - k_1 m_1 - k_1^2 m_1 - k_2 m_2 - k_2^2 m_2 - 6 k_1 n \\ 
& & - \, 3 k_2 n + 2 m_1 n + 4 m_2 n - k_2 m_2 n - 2 n^2 + m_1 n^2 + 2 m_2 n^2 \\ 
& & + \, \frac{1}{2} \left( (1 + 8 k_1 + 4 k_2 + 8 n) \min\{l_1,l_2,l_3\} - 3 \min\{l_1,l_2,l_3\}^2\right)
\end{eqnarray*}
where
$$
l_1=2 k_1 + n, \qquad l_2=2 k_1 + k_2 + n, \qquad l_3=k_2 + 2 n.
$$
\medskip

\subsection{ The highest degree.} The quantity  $Q(n,k_1,k_2)$ is closely related
to the degree $\delta_K(n)$. According to \cite{GV}  for the 2-fusion knot $K=K(m_1,m_2)$, with $m_2 \notin \{-1, 0\}$, we have the following possibilities:
\vskip 0.04in

\textbf{Case A.} Suppose that $m_1, m_2 \ge 1$. Then
$$\delta_K(n) = Q(n,k_1,-k_1),$$

\noindent where $$c_1=\frac{1 - m_1 + m_2 + m_2 n}{2 (-1 + m_1 + m_2)},$$ 
and  $k_1$ is of the integers closest to $c_1$ satisfying $k_1 \le n/2$. 

\vskip 0.04in

\textbf{Case B.} Suppose that $m_1 \le 0, m_2 \ge 1$. There are 2 subcases.

\textbf{(B-1)} We have ($1 + m_1 + m_2 \le 0$) or ($1 + m_1 + m_2 > 0$ and $1 + 2m_1 + m_2 < 0$). Then
$$\delta_K(n) = Q(n,n,0).$$

\textbf{(B-2)}  We have $1 + m_1 + m_2 > 0$ and $1 + 2m_1 + m_2 \ge 0$. 
 Then
$$\delta_K(n) = Q(n,k_1,k_1-n),$$
\noindent where  $$c_2=\frac{1 - m_1 - m_2 + (1 + m_2) n}{2 (1 + m_1 + m_2)}.$$
and $k_1$ is one of the integers closest to $c_2$.

\textbf{Case C.} Suppose that  $m_2 \le -2$. There are 2 subcases.

\textbf{(C-1)} We have $m_1 \le -3m_2/2$. Then 
$$\delta_K(n) = Q(n,n,n).$$

\textbf{(C-2)} We have $m_1 > -3m_2/2$. Let $$c_3=\frac{-3/2 + m_1 + m_2 + (1+m_2 ) n}{1-2m_1-2m_2}$$
and let $k_1$ be one of the integers closest to $c_3$. Then
$$\delta_K(n) = \begin{cases} Q(n,k_1,k_1) &\mbox{if } c_3 \notin \frac{1}{2} + \BZ, \\ 
Q(n,k_1,k_1)-(c_3 + 1/2) & \mbox{if } c_3 \in \frac{1}{2} + \BZ. \end{cases}$$

\subsection{Calculating the linear term}  In this subsection we will prove the following.

\begin{theorem}

\label{prop:b}

For the 2-fusion knot $K=K(m_1,m_2)$, with $m_2 \notin \{-1,0\}$, we have
$$
b_K(n) = \begin{cases} \frac{m_2(1-m_1)}{2(-1+m_1+m_2)} &\mbox{if } (m_1, m_2) \in I_1 \cup I_2, 
\smallskip
\\ 1 + m_1  & \mbox{if } (m_1, m_2) \in I_3, 
\smallskip
\\ \frac{m_1(m_2-1)}{2(1+m_1+m_2)}   & \mbox{if } (m_1, m_2) \in I_4, 
\medskip
\\ 5/2 + m_1 + 3 m_2  & \mbox{if } (m_1, m_2) \in I_5, 
\smallskip
\\ \frac {(-5 + 2m_1)(1 + m_2)} {2 (-1 + 2 m_1 + 2 m_2)} & \mbox{if } (m_1, m_2) \in I_6 \mbox{ and } \frac {-1+(1 + m_2)(n-1)} {-1 + 2 m_1 + 2 m_2} \notin  \BZ,
\medskip
\\ \frac {(-3 + 2m_1)(1 + m_2)} {2 (-1 + 2 m_1 + 2 m_2)} & \mbox{if } (m_1, m_2) \in I_6\mbox{ and } \frac {-1+(1 + m_2)(n-1)} {-1 + 2 m_1 + 2 m_2} \in  \BZ.
\end{cases}
$$
In particular we have $b_K(n) \le 0$. Moreover $b_K(n) = 0$ if and only if $m_1 \in \{0,1\}$ and $m_2 \ge 1$, or $(m_1, m_2) = (-1,1)$.
\end{theorem}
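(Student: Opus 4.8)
The plan is to read $b_K(n)$ directly off the explicit descriptions of $\delta_K(n)$ recalled above. Since $\delta_K$ and the highest degree of the colored Jones polynomial differ only by a reindexing and an additive linear term (in the normalization used in the proof of Corollary \ref{low} one has $d_+[J_K(n)] = \delta_K(n-1) + (n-1)/2$), it suffices to compute, in each of the cases A, B-1, B-2, C-1, C-2, the linear coefficient of the quadratic quasi-polynomial $\delta_K(n)$ and then correct for the reindexing. In every case $\delta_K(n) = Q(n,k_1,k_2)$ for a prescribed pair: either $k_1$ is a fixed integer multiple of $n$ (namely $k_1 = n$, with $k_2 \in \{0,n\}$), or $k_1$ is one of the integers nearest to an explicit linear function $c_i = c_i(n)$ of rational slope, with $k_2 \in \{-k_1,\, k_1-n,\, k_1\}$. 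First I would, case by case, substitute the prescribed relation $k_2 = k_2(k_1)$ into $Q$ and write $k_1(n) = \alpha_i n + \gamma_i(n)$, where $\alpha_i$ is the slope of $c_i$ (or $1$ when $k_1 = n$) and $\gamma_i(n)$ is bounded and, for $n \gg 0$, a periodic function of $n$.

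The genuine obstacle is the piecewise summand $\min\{l_1,l_2,l_3\}$ occurring in $Q$, where $l_1 = 2k_1+n$, $l_2 = 2k_1+k_2+n$, $l_3 = k_2+2n$: one must decide, in each case and for $n \gg 0$, which index realizes the minimum. This is governed by the signs of $k_2$ and of $l_1-l_3 = 2k_1+k_2-n$ when $k_1 \approx \alpha_i n$, and it is precisely this bookkeeping that forces the parameter region $\{m_2 \notin \{-1,0\}\}$ to be partitioned into the six pieces $I_1,\dots,I_6$, with Case A giving $I_1 \cup I_2$, B-1 giving $I_3$, B-2 giving $I_4$, C-1 giving $I_5$ and C-2 giving $I_6$; the sub-division of Case A reflects the constraint $k_1 \le n/2$, and that of C-2 records whether $c_3$ evaluated at $n-1$ is a half-integer, which is the origin of the extra summand $-(c_3+1/2)$ and of the two listed values of $b_K(n)$ on $I_6$. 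Once the minimizing index is fixed on a piece, $Q(n,k_1(n),k_2(n))$ becomes an honest quadratic quasi-polynomial; I would expand it and collect the coefficient of $n$. A key simplification is that in the cases where $k_1$ tracks the interior maximizer $c_i$ (Case A, B-2, C-2) this $c_i$ is, on the relevant piece, the vertex of the quadratic $k_1 \mapsto Q(n,k_1,k_2(k_1))$, so the rounding error $\gamma_i(n)$ enters $\delta_K(n)$ only through a term quadratic in $\gamma_i(n)$ and drops out of the linear coefficient; this is why $b_K(n)$ comes out constant on $I_1 \cup I_2$, $I_3$, $I_4$ and $I_5$. In the cases where $k_1 = n$ there is no rounding at all, and in C-2 the only remaining $n$-dependence beyond the vertex contribution is the piecewise term $-(c_3+1/2)$, which shifts the linear coefficient by the slope of $c_3$ on a single congruence class of $n$. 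Reindexing $n \mapsto n-1$ and adding $(n-1)/2$ then produces the six displayed formulas; as a consistency check, the $n^2$-coefficients obtained along the way should match the Jones slopes of these knots as recorded in \cite{GV, Ga-slope}.

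Finally, with the closed formulas in hand, the bound $b_K(n) \le 0$ and the equality statement reduce to elementary sign checks from the defining inequalities of the six pieces. On $I_1 \cup I_2$, where $m_1,m_2 \ge 1$, the fraction $\frac{m_2(1-m_1)}{2(m_1+m_2-1)}$ has nonpositive numerator and positive denominator and vanishes exactly when $m_1 = 1$. On $I_3$ the region conditions force $m_1 \le -2$, so $1+m_1 < 0$. On $I_4$, where $m_1 \le 0$ and $m_2 \ge 1$, the numerator $m_1(m_2-1)$ is nonpositive and the denominator positive, the fraction vanishing exactly when $m_1 = 0$ or $(m_1,m_2) = (-1,1)$. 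On $I_5$ the conditions $m_2 \le -2$ and $m_1 \le -3m_2/2$ give $5/2 + m_1 + 3m_2 \le 5/2 + 3m_2/2 \le -1/2 < 0$. On $I_6$ the condition $m_1 > -3m_2/2$ together with $m_2 \le -2$ forces $m_1 \ge 4$, so both numerators $(2m_1-5)(1+m_2)$ and $(2m_1-3)(1+m_2)$ are negative while the denominator $2(2m_1+2m_2-1)$ is positive. Collecting the equality cases over all the pieces leaves exactly $m_1 \in \{0,1\}$ with $m_2 \ge 1$, together with $(m_1,m_2) = (-1,1)$, as claimed. The main difficulty throughout is the case-by-case identification of the minimizing $l_j$ (and verifying that $I_1,\dots,I_6$ indeed exhaust the parameter plane), together with the careful treatment of the half-integer edge case in C-2; the remainder is long but routine polynomial algebra.
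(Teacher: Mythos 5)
Your proposal follows essentially the same route as the paper's proof: case-by-case substitution of the prescribed $(k_1,k_2)$ into $Q$, writing $k_1 = c_i + r_n$ and using the fact that $c_i$ is the vertex of the quadratic in $k_1$ so the rounding error contributes only a $(\text{const})\cdot r_n^2$ term and drops out of the linear coefficient, followed by the reindexing $d_+[J_K(n)] = \delta_K(n-1) + (n-1)/2$ and the sign analysis on each region (which you carry out correctly, including the $m_1 \ge 4$ bound on $I_6$ and the $(-1,1)$ edge case on $I_4$). This matches the paper's argument, so the proposal is sound.
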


\begin{proof} As in the previous subsection, there are 3 cases.

\medskip

\textbf{Case A.} $m_1, m_2 \ge 1$. Recall that $$c_1=\frac{1 - m_1 + m_2 + m_2 n}{2 (-1 + m_1 + m_2)}$$ 
and $k_1$ is one of the integers closest to $c_1$ satisfying $k_1 \le \frac{n}{2}$. We have
\begin{eqnarray*}
\delta_K(n) = Q(n,k_1,-k_1)
&=& (1-m_1-m_2) k_1^2 + ( 1 - m_1 + m_2 + m_2 n) k_1 \\
&& + \, 2 m_1 n + 4 m_2 n  + m_1 n^2 + 2 m_2 n^2 + \frac{n}{2} + \frac{n^2}{2}.
\end{eqnarray*}

Write $k_1 = c_1 + r_n$ where $r_n$ is a periodic sequence with $\begin{cases} |r_n| \le 1/2 &\mbox{if } m_1 \ge 2, 
\\ r_n \in \{-1/2, -1\}  & \mbox{if } m_1 = 1. 
\end{cases}$ We have
$$
\delta_K(n) = Q(n,c_1+r_n,-c_1-r_n) = Q(n,c_1,-c_1) + (1-m_1-m_2) r_n^2
$$
and
\begin{eqnarray*}
Q(n,c_1,-c_1) &=& 2 m_1 n + 4 m_2 n  + m_1 n^2 + 2 m_2 n^2 + \frac{n}{2} + \frac{n^2}{2} -\frac{( 1 - m_1 + m_2 + m_2 n)^2}{4(1-m_1-m_2)} \\
&=& \left( m_1 + 2m_2 + \frac{1}{2} + \frac{m_2^2}{4(-1 + m_1 + m_2)} \right) n^2 \\
&& + \,  \left( 2m_1 + 4m_2 + \frac{1}{2} + \frac{m_2(1-m_1+m_2)}{2(-1 + m_1 + m_2)} \right) n - \frac{( 1 - m_1 + m_2)^2}{4(1-m_1-m_2)}.
\end{eqnarray*} 

Since $d_+[J_K(n)] = \delta_K(n-1) + (n-1)/2$ we obtain
\begin{eqnarray*}
d_+[J_K(n)] &=& \left( m_1 + 2m_2 + \frac{1}{2} + \frac{m_2^2}{4(-1 + m_1 + m_2)} \right) n^2 + \frac{m_2(1-m_1)}{2(-1+m_1+m_2)} n\\
&& - \, \left( m_1 + 2m_2 + \frac{1}{2} - \frac{(1-m_1)^2}{4(-1 + m_1 + m_2)} \right) + (1-m_1-m_2) r^2_{n-1}.
\end{eqnarray*} 
\medskip

\textbf{Case B.} $m_1 \le 0, m_2 \ge 1$.  There are 2 subcases.

\textbf{(B-1)} ($1 + m_1 + m_2 \le 0$) or ($1 + m_1 + m_2 > 0$ and $1 + 2m_1 + m_2 < 0$). Then
$$\delta_K(n) = Q(n,n,0)=\left( \frac{1}{2} + 2 m_2 \right) n^2 + \left( \frac{3}{2} + m_1 + 4 m_2 \right) n.$$
Hence $$
d_+[J_K(n)] = \left( \frac{1}{2} + 2 m_2 \right) n^2 + (1 + m_1) n - \left( \frac{3}{2} + m_1 + 2 m_2 \right).
$$

\textbf{(B-2)} $1 + m_1 + m_2 > 0$ and $1 + 2m_1 + m_2 \ge 0$. Recall that $$c_2=\frac{1 - m_1 - m_2 + (1 + m_2) n}{2 (1 + m_1 + m_2)}$$
and $k_1$ is one of the integers closest to $c_2$. We have 
\begin{eqnarray*}
\delta_K(n) = Q(n,k_1,k_1-n)
&=& (-1-m_1-m_2) k_1^2 + ( 1 - m_1 - m_2 + (1 + m_2) n) k_1 \\
&& + \, 2 m_1 n + 5 m_2 n  + m_1 n^2 + 2 m_2 n^2 + \frac{n}{2} + \frac{n^2}{2}.
\end{eqnarray*}
Write $k_1 = c_2 + r_n$ where $r_n$ is a periodic sequence with $|r_n| \le 1/2$.  As in Case A we have 
$$\delta_K(n) = Q(n,c_2,c_2-n) + (-1-m_1-m_2)r_n^2$$
and
\begin{eqnarray*}
Q(n,c_2,c_2-n) &=&  2 m_1 n + 5 m_2 n  + m_1 n^2 + 2 m_2 n^2 + \frac{n}{2} + \frac{n^2}{2} - \frac{( 1 - m_1 - m_2 + (1 + m_2) n)^2}{4(-1-m_1-m_2)} \\
&=& \left( \frac{3}{4} + \frac{3m_1}{4} + \frac{9m_2}{4} +  \frac{m_1^2}{4(1 + m_1 + m_2)} \right) n^2  \\
&& + \,  \left( 1 + 2m_1 + \frac{9m_2}{2}  - \frac{m_1}{1 + m_1 + m_2} \right) n - \frac{( 1 - m_1 - m_2)^2}{4(-1-m_1-m_2)}.
\end{eqnarray*} 
Hence 
\begin{eqnarray*}
d_+[J_K(n)] &=& \left( \frac{3}{4} + \frac{3m_1}{4} + \frac{9m_2}{4} +  \frac{m_1^2}{4(1 + m_1 + m_2)} \right) n^2 + \frac{m_1(m_2-1)}{2(1+m_1+m_2)} n\\
&& - \,  \left( \frac{3}{4} + \frac{3m_1}{4} + \frac{9m_2}{4} - \frac{(m_2 - 1)^2}{4(1 + m_1 + m_2)}\right) + (-1-m_1-m_2)r_n^2.
\end{eqnarray*}
\medskip

\textbf{Case C.} $m_2 \le -2$. There are 2 subcases.

\textbf{(C-1)} $m_1 \le -3m_2/2$. Then 
$$\delta_K(n) = Q(n,n,n)=(2 + m_1 + 3 m_2) n.$$
Hence $$d_+[J_K(n)] = (5/2 + m_1 + 3 m_2) (n-1).$$ 

\textbf{(C-2)} $m_1 > -3m_2/2$. Recall that $$c_3=\frac{-3/2 + m_1 + m_2 + (1+m_2 ) n}{1-2m_1-2m_2}$$
and let $k_1$ be one of the integers closest to $c_3$. We have
$$\delta_K(n) = \begin{cases} Q(n,k_1,k_1) &\mbox{if } c_3 \notin \frac{1}{2} + \BZ \\ 
Q(n,k_1,k_1)-(c_3 + 1/2) & \mbox{if } c_3 \in \frac{1}{2} + \BZ \end{cases}$$
and
\begin{eqnarray*}
Q(n,k_1,k_1)
&=& \left( 1/2-m_1-m_2 \right) k_1^2 - \left( -3/2 + m_1 + m_2 + (1+m_2 ) n \right) k_1 \\
&& + \, 2 m_1 n + 4 m_2 n  + m_1 n^2 + 2 m_2 n^2 + \frac{n}{2} + \frac{n^2}{2}.
\end{eqnarray*}

Write $k_1 = c_3 + r_n$ where $r_n$ is a periodic sequence with $|r_n| \le 1/2$. As in Case A we have 
$$Q(n,k_1,k_1) = Q(n,c_3,c_3) + \left( 1/2-m_1-m_2 \right) r_n^2$$
and
\begin{eqnarray*}
Q(n,c_3,c_3) &=&  2 m_1 n + 4 m_2 n  + m_1 n^2 + 2 m_2 n^2 + \frac{n}{2} + \frac{n^2}{2} - \frac{\left( -3/2 + m_1 + m_2 + (1+m_2 ) n \right)^2}{4(1/2-m_1-m_2)} \\
&=& \frac {(2 m_1 + 3 m_2)^2} {2 (-1 + 2 m_1 + 2 m_2)} n^2 + \left( \frac{1}{2} + 2 m_1 + \frac{9m_2}{2} + \frac {-3 + 2 m_1} {2 (-1 + 2 m_1 + 2 m_2)} \right) n\\
&& - \, \frac{\left( -3/2 + m_1 + m_2  \right)^2}{4(1/2-m_1-m_2)} .
\end{eqnarray*} 
Hence 
\begin{eqnarray*}
d_+[J_K(n)] &=& \frac {(2 m_1 + 3 m_2)^2} {2 (-1 + 2 m_1 + 2 m_2)} n^2 + 
\begin{cases} \frac {(-5 + 2m_1)(1 + m_2)} {2 (-1 + 2 m_1 + 2 m_2)} n &\mbox{if } \frac {-1+(1 + m_2)(n-1)} {-1 + 2 m_1 + 2 m_2}  \notin \BZ \medskip \\ 
\frac {(-3 + 2 m_1)(1 + m_2)} {2(-1 + 2 m_1 + 2 m_2)}n & \mbox{if } \frac {-1+(1 + m_2)(n-1)} {-1 + 2 m_1 + 2 m_2}  \in  \BZ \end{cases} \\
&& - \, \left( \frac{1}{2} + m_1 + 2m_2 - \frac{\left( 2m_1 -5  \right)^2}{8(2m_1 + 2m_2 -1)} \right) + \left( 1/2-m_1-m_2 \right) r_{n-1}^2.
\end{eqnarray*}
This completes the proof of Theorem \ref{prop:b}.
\end{proof}

\subsection{Proof of Theorem \ref{thm:slope-cable}} 
Theorem \ref{prop:b} implies that Conjecture \ref{conj} holds true for 2-fusion knots: The fact that $b_K(n)\leq 0$ is clear by the statement
of Theorem \ref{prop:b}.  
Moreover, $b_K(n) = 0$ if and only if $m_1 \in \{0,1\}$ and $m_2 \ge 1$, or $(m_1, m_2) = (-1,1)$.
As noted in \cite{GV}  we have
$K(1, m_2)= K^*(0, -m_2-1)$. On the other hand, by definition 
the knot $K(0, m_2)$ is a torus  knot. Finally  $K(-1, 1)$ is the torus knot $T(2,5)$.
Thus if $b_K(n)=0$, and $K=K(m_1, m_2)$, then $K$ is a torus knot. \qed

\smallskip

\begin{example} 
\label{example-pretzel}
Consider the 2-fusion knot $K(m, 1)$, also known as the $(-2,3,2m+3)$-pretzel knot. It is known that $K(m,1)$ is $B$--adequate if $m \le -2$ and is $A$--adequate if $m \ge -1$. Moreover $K(m,1)$ is a torus knot if $|m| \le 1$, and $K(-2,1)$ is the twist knot $5_2$ which is an adequate knot. Hence we consider the two cases $m \ge 2$ and $m \le -3$ only. 

Note that $K(m_1,m_2)$ is the mirror image of $K(1-m_1, -1-m_2)$. In particular, $K(m,1)$ is the mirror image of $K(1-m, -2)$.

\textit{Case 1:} $m \ge 2$. From the proof of Theorem \ref{prop:b} we have
$$
d_+[J_{K(m, 1)}(n)] = \left( \frac{5}{2} + m + \frac{1}{4m} \right) n^2 + \left( \frac{1}{2m} - \frac{1}{2} \right) n - \left( 3 + \frac{3m}{4} - \frac{1}{4m} \right) - m r^2_{n-1}
$$
where $r_n$ is a periodic sequence with $|r_n| \le 1/2$.

Hence, by Theorem \ref{thm-quasi-constant}, the $(p,q)$-cable of $K(m,1)$ satisfies Conjecture \ref{conj} and the Slope Conjecture if $$p-\left( 10 + 4m + \frac{1}{m} \right)q <0 \quad \text{or} \quad p-\left( 10+ 4m + \frac{1}{m} \right)q > \frac{m}{4} + \frac{1}{m} - 1.$$

\textit{Case 2:} $m \le -3$. From the proof of Theorem \ref{prop:b} we have
$$d_+[J_{K(1-m, -2)}(n)] = -\frac {2(m+2)^2} {2m+3} \, n^2 + b(n) n + (6m+17)/8 + (m + 3/2) r^2_{n-1}$$
where $r_n$ is a periodic sequence with $|r_n| \le 1/2$, and $b(n) = \begin{cases} -\frac{1}{2}   &\mbox{if } (2m+3) \nmid n, \medskip \\ 
-\frac {2m+1} {2( 2m+3)} & \mbox{if } (2m+3) \mid n. \end{cases}$

\smallskip

Hence, by Theorem \ref{thm-quasi-constant}, the $(p,q)$-cable of $K(1-m, -2) = (K(m,1))^*$ satisfies Conjecture \ref{conj} and the Slope Conjecture if $$p + \left( 10 + 4m + \frac{1}{2m + 3} \right)q < 0 \quad \text{or} \quad p + \left( 10+ 4m + \frac{3}{2m+3} \right)q > - \left( \frac{m}{4} + \frac{1}{2m+3}+ \frac{11}{8} \right).$$
\end{example}

\no{ Similarly, let $K$ be a knot such that for $n \gg 0$ we have
$$
d_-[J_K(n)]=a^*n^2+b^*(n)n+d^*(n)
$$
where $a^*$ is a constant, $b^*(n)$ and $d^*(n)$ are periodic functions, and $b^*(n) \ge 0$. Let
\begin{eqnarray*}
M^*_1 &=&  \max\{|b^*(i)-b^*(j)|\},\\
M^*_2 &=& \max\{-2b^*(i) + |b^*(i) - b^*(j)| +|d^*(i)-d^*(j)|\}.
\end{eqnarray*}

Suppose $p - (4a^* + M^*_1)q > 0$, or $p - (4a^* - M^*_1)q < - M^*_2$ and $p - 4a^*q <0$. Then for $n \gg 0$ we have
$$d_-[J_{\K}(n)]=A^*n^2+B^*(n)n+D^*(n)$$
where $A^*$ is a constant, and $B^*(n), D^*(n)$ are periodic functions, and $B^*(n) \ge 0$. Moreover, if $js^*_K \subset bs_K$ then $js^*_{\K} \subset bs_{\K}$.
}

\bibliographystyle{plain} \bibliography{biblio}
\end{document}